\documentclass[11pt,reqno]{amsart}
\usepackage[T1]{fontenc}
\usepackage[utf8]{inputenc}
\usepackage{lmodern}
\usepackage{amsmath,amssymb,amsthm}
\usepackage{microtype}
\usepackage[margin=1.05in]{geometry}
\usepackage[hidelinks]{hyperref}
\usepackage[normalem]{ulem}
\usepackage{mathtools}
\usepackage{xcolor}

\hypersetup{pdftitle={A finite threshold for the double-comet conjecture},
  pdfsubject={Minimum adjacency spectral gaps of trees},
  pdfkeywords={tree, adjacency spectral gap, double comet, Perron vector, Schur complement}}
\newtheorem{theorem}{Theorem}[section]
\newtheorem{lemma}[theorem]{Lemma}
\newtheorem{proposition}[theorem]{Proposition}
\newtheorem{corollary}[theorem]{Corollary}
\newtheorem{conjecture}[theorem]{Conjecture}
\theoremstyle{remark}

\numberwithin{equation}{section}
\allowdisplaybreaks[1]
\makeatletter
\let\original@setauthors\@setauthors
\renewcommand{\@setauthors}{%
  \begingroup
  \let\MakeUppercase\@firstofone
  \def\author@andify{%
    \nxandlist
      {\unskip,\penalty-1\space\ignorespaces}%
      {\unskip,\penalty-2\space\ignorespaces}%
      {\unskip,\penalty-2\space\ignorespaces}%
  }%
  \original@setauthors
  \endgroup
}
\makeatother
\title[A finite threshold for the double-comet conjecture]
{A finite threshold for the double-comet conjecture}
\author[Dongxiu Cai]{Dongxiu Cai\textsuperscript{1}}
\author[Zhenbo Chen]{Zhenbo Chen\textsuperscript{1}}
\author[Xiao-Dong Zhang]{Xiao-Dong Zhang\textsuperscript{1,2}}

\thanks{\textsuperscript{1} School of Mathematical Sciences, MOE-LSC,
SHL-MAC, Shanghai Jiao Tong University, 800 Dongchuan Road,
Shanghai 200240, P.R.~China. Emails:
\texttt{diudiutse@sjtu.edu.cn} (Dongxiu Cai) and
\texttt{czb911@sjtu.edu.cn} (Zhenbo Chen).}
\thanks{\textsuperscript{2} Corresponding author: Xiao-Dong Zhang.
Email: \texttt{xiaodong@sjtu.edu.cn}.}
\date{September 8, 2026}
\subjclass[2020]{05C50, 05C05; 15A18, 60J27}
\keywords{Adjacency spectral gap, tree, double comet, Perron vector,
spectral center, Schur complement, effective resistance}

\begin{document}
\begin{abstract}
For a tree $T$, let $g(T)=\lambda_1(T)-\lambda_2(T)$ be the difference
between its two largest adjacency eigenvalues.
A balanced double comet is obtained by attaching equally many
leaves to the two endpoints of a path.
Jovovi\'c, Koledin and Stani\'c conjectured that such a tree attains
the minimum adjacency spectral gap among trees of any fixed order.
We prove that every minimizing tree of order $n\ge200$ is a
balanced double comet.
We also show that, for any finite irreducible reversible
continuous-time Markov chain, the inverse spectral gap differs
from the effective resistance between two states times the stationary
variance of their hitting probability by at most the inverse
Dirichlet gap for killing at those states.
For Perron chains, we give an exact two-vertex Schur-complement
formula for this resistance--variance quantity.
\end{abstract}

\maketitle

\section{Introduction}
\label{sec:introduction}

Let $G$ be a finite simple connected graph on $n\ge2$ vertices. Write its
adjacency eigenvalues in decreasing order as
\[
 \lambda_1(G)>\lambda_2(G)\ge\cdots\ge\lambda_n(G),
 \qquad g(G)=\lambda_1(G)-\lambda_2(G).
\]
For the family $\mathcal T_n$ of trees of order $n$, put
\[
 g_n=\min_{T\in\mathcal T_n}g(T).
\]
For integers $k\ge0$ and $\ell\ge2$, the \emph{balanced double comet}
$C(k,\ell)$ is obtained by attaching $k$ new leaves to each endpoint of
a path on $\ell$ vertices. Its order is $2k+\ell$.
The endpoint vertices of the original path are called its hubs.
In particular, $C(0,n)$ is the path $P_n$.

Jovovi\'c, Koledin and Stani\'c~\cite{JovovicKoledinStanic2018}
proposed the following conjecture and reported its verification by
computer search for $n\le20$.

\begin{conjecture}[Jovovi\'c--Koledin--Stani\'c]
\label{conj:double-comet}
For every $n\ge2$, the minimum adjacency spectral gap among trees of
order $n$ is attained by a balanced double comet.
\end{conjecture}

The two equal leaf clusters give a double comet two comparable regions
of large Perron mass. Increasing the path between them weakens their
interaction and brings the first two eigenvalues close together.
The difficulty is to turn this description into a comparison with
arbitrary trees: bounds on either eigenvalue separately do not control
their possibly much smaller difference.

Our main result gives an explicit threshold for the conjecture and
identifies every minimizer above that threshold.

\begin{theorem}
\label{thm:main}
For every integer $n\ge200$, the minimum adjacency spectral gap
among trees of order $n$ is attained by a balanced double comet.
Thus Conjecture~\ref{conj:double-comet} holds for all $n\ge200$, and
\[
 g_n=
 \min_{0\le k\le\lfloor(n-2)/2\rfloor}
 g(C(k,n-2k)).
\]
Moreover, every tree $T\in\mathcal T_n$ with $g(T)=g_n$ is a balanced
double comet.
\end{theorem}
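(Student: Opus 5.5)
The plan combines an upper bound on $g_n$ from a well-chosen double comet with a lower bound on $g(T)$ for every other tree, using the Markov-chain variance identity announced in the abstract.

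\textbf{Step 1: upper bound.} Choose $k\approx n/\log n$ so that $\ell=n-2k$ is of order $n$. For $C(k,\ell)$ we have $\lambda_1\approx\sqrt k$. The Perron vector decays geometrically, at rate about $\lambda_1^{-1}$ per step, from each hub into the path. Hence the two hub eigenvalues $\sqrt k$ split by a tunneling amount of order $\sqrt k\,\lambda_1^{-(\ell-1)}$. The Schur complement onto the two hubs gives this exactly: it is a $2\times 2$ symmetric matrix whose off-diagonal entry is an explicit path Green's function. Optimizing over $k$ yields an explicit bound $g_n\le \varepsilon_n$, with $\log(1/\varepsilon_n)$ of order $\log^2 n$ roughly (more precisely $\approx \ell\log\sqrt k$).

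\textbf{Step 2: lower bound for general trees.} Let $x$ be the Perron vector of $T$. The Doob transform $P=D_x^{-1}(A/\lambda_1)D_x$ is a reversible chain with stationary law $x^2$ and spectral gap $g/\lambda_1$. Apply the variance inequality from the abstract with two states $u,v$ chosen as the \emph{spectral centers}, namely the maxima of $x$ on the two sides of the sign-change edge of $\lambda_2$. This gives
\[
1/g \le R_{uv}\,\mathrm{Var}(h)+1/\gamma_{\mathrm{Dir}} .
\]
The effective resistance $R_{uv}$ in the $x$-weighted network is at most $\sum_{e\in \text{path}} (x_ax_b)^{-1}$. The Dirichlet gap is bounded below because killing at $u,v$ leaves components whose top eigenvalue is strictly below $\lambda_1$ by a definite margin, by interlacing. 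Using the two-vertex Schur-complement formula, I would rewrite $R_{uv}\mathrm{Var}(h)$ exactly in terms of the path $u$--$v$ and the branches hanging off it.

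\textbf{Step 3: structural comparison.} Suppose $g(T)\le \varepsilon_n$. Then $R_{uv}$ must be huge, which forces a long path between $u$ and $v$ along which $x$ is tiny. It also forces $\lambda_1$ to be large enough that the total $x$-mass concentrates near $u$ and $v$. Next, run local transformation arguments, each shown to strictly decrease $g$ via the exact Schur formula:
\begin{itemize}
\item move every branch hanging off the interior of the path to a hub;
\item flatten each hub neighborhood into a star of leaves;
\item equalize the two leaf counts.
\end{itemize}
The last move uses the fact that the $2\times2$ Schur complement's eigenvalue gap is minimized when its diagonal entries coincide. Strictness in each move gives the uniqueness statement.

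\textbf{Main obstacle.} The hardest part is Step 3, where the error terms must be controlled quantitatively. The bound on $1/\gamma_{\mathrm{Dir}}$ and the discrepancy between the true tree and its $2\times2$ reduction must be smaller than the exponentially small gains from each transformation. I would first attempt to bound the interior-branch perturbations using monotonicity of the path Green's function in $\lambda$. Making all constants explicit is what yields the threshold $n\ge200$.
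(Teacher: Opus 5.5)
\textbf{Step~3 has a genuine gap, and the moves you list fail as stated.} At a spectral center (Lemma~\ref{lem:center-dichotomy}), both selected components have spectral radius in $[\lambda_2(T),\lambda_1(T))$, an interval of length $g$. An exponentially small gap therefore means the two heavy sides are tuned to each other to exponential precision. A one-sided local move shifts one side's radius by an amount that is only polynomially small, and this destroys the tuning.

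For example, take $C(k,\ell)$ with one extra leaf at the middle path vertex ($\ell$ odd). This tree is symmetric and has an exponentially small gap. Your first move sends that leaf to a hub and produces a double comet with $k+1$ and $k$ leaves, whose gap is of order $k^{-1/2}$. Flattening one hub neighbourhood has the same effect. So the moves do not individually decrease $g$. What remains is a composite, balance-preserving comparison of an arbitrary tree with a comet, with errors smaller than the exponentially small gains, and that is the whole problem.

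The equalization step has further defects. It is impossible when the leaf count is odd. Also, the entries of the two-hub Schur matrix depend on $z$, and the eigenvalues are zeros of $\det S(z)$, not eigenvalues of a fixed matrix. Strictness of the moves therefore cannot supply uniqueness either.

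\textbf{The missing idea is a rigidity mechanism that reads the structure off $T$ directly; the paper never transforms $T$.} Its steps are as follows.
\begin{enumerate}
\item Rayleigh--Ritz compression onto the Perron vectors of the two center components gives $g>\alpha_1\alpha_2$ in the edge case and $g\ge 2\alpha_1\alpha_2/R$ in the vertex case.
\item Transfer along the root path, together with the rooted budget $E_i=|H_i|-\lfloor r_i^2\rfloor-D_i\ge0$, gives $g\ge c_n\,y^{-(n-N_1-N_2-\mathcal E)/2}$. Here $c_n$ is an explicit prefactor, $y=q^{-2}\approx R^2-2$, and $\mathcal E$ is an integer defect.
\item Comparison with explicit comets forces $\lambda_2>4$ and then $\mathcal E\le 3$.
\item In that regime each $r_i^2$ lies in $[N_i,N_i+2/5)$ with $N_i=\deg_{H_i}w_i$. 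Since $|r_1^2-r_2^2|\le\eta<1/100$, this forces $N_1=N_2$. Balance is thus a consequence of near-degeneracy, not a move.
\item The adjacent comets $C(N-1,A+2)$ and $C(N,A)$ exclude every positive defect. Then $\mathcal E=0$ forces both components to be brooms (Theorem~\ref{thm:center-rigidity}), and this also gives uniqueness.
\end{enumerate}

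Two smaller points. In Step~2, interlacing gives only $\lambda_3(T)\le\rho(T-\{u,v\})<\lambda_1(T)$, not a definite margin. If some component of $T-\{u,v\}$ has radius at least $\lambda_2$, then $1/\gamma\ge 1/g$ and the two-boundary estimate gives nothing. The paper's proof of the main theorem does not use Theorem~\ref{thm:two-boundary} at all. In Step~1, $\log(1/\varepsilon_n)$ is of order $n\log n$, not $\log^2 n$; your ``more precisely'' expression is the correct one.
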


Theorem~\ref{thm:main} does not assert uniqueness of the minimizing
integer $k$. It also does not settle the conjecture for every smaller
order. Combined with the verification reported in
\cite{JovovicKoledinStanic2018}, it reduces the remaining orders in
Conjecture~\ref{conj:double-comet} to $21\le n\le199$.
The proof of the theorem itself uses no enumeration of trees or
numerical eigenvalue computations.

Spectral centers provide a useful decomposition for questions about the
first two eigenvalues of a tree; see
Kumar, Mohar, Pragada and Zhan~\cite{KumarMoharPragadaZhan2025}.
The same authors study spectral sums and convex combinations in
\cite{KumarMoharPragadaZhan2026}, where the double-comet gap conjecture
is also stated. Related progress on minimum gaps of trees and connected
graphs was announced by Hou~\cite{Hou2026}; that conference abstract
does not specify a theorem or a threshold. The argument here includes
proofs of the particular center and structural statements it needs.

We briefly describe the proof. For a rooted tree $(H,a)$ of radius
$r>2$, let $D$ be the distance from $a$ to its nearest Perron maximum.
The integer
\[
 E=|V(H)|-\lfloor r^2\rfloor-D
\]
is nonnegative. A small value of $E$ restricts branching away from the
root-to-maximum path. At a spectral center of a minimizing tree, this
gives two such budgets and a total defect $\mathcal E$, which also counts
any unused branches at a center vertex. A lower bound for the gap
retains $\mathcal E$ in its exponential term.

A fixed double-comet comparison first implies $\lambda_2(T)>4$ for
$n\ge200$. An optimized comparison then gives $\mathcal E\le3$.
In this range, resolvent estimates place the squared radii of the two
selected components near a common integer degree. A two-dimensional
Schur complement controls the contribution of the other spectral modes.
Comparison with two adjacent double-comet parameters excludes every
positive defect. The common hub degree and zero defect then force both
components to be brooms, whose joined handles form a balanced double comet.

The paper also records a probabilistic estimate that explains the
two-region comparison. The Perron transform conjugates $A-RI$ to the
generator of a reversible continuous-time Markov chain. For any two
boundary vertices $u,v$, let $h$ be the probability of hitting $v$ before
$u$, let $\mathcal R(u,v)$ be their effective resistance in the Perron
conductance network, and let $\gamma$ be the killed Dirichlet gap.
We prove
\[
 0\le\frac1{g(G)}-
 \mathcal R(u,v)\operatorname{Var}_\pi(h)\le\frac1\gamma.
\]
This estimate and its exact two-vertex Schur representation are proved
in Section~\ref{sec:perron-probability}. Section~\ref{sec:comet-comparator}
derives the double-comet bound
$g(C(k,\ell))\le(k-1)^{-(\ell-2)/2}$ for $k\ge3$.
It also gives a direct antisymmetric Rayleigh argument for that bound,
so the main theorem does not depend on the probabilistic lower estimate.
Sections~\ref{sec:rooted-budgets} and~\ref{sec:isolation} establish the
rooted estimates and the spectral separation needed to start the argument.
Section~\ref{sec:center} proves the center criterion, and
Section~\ref{sec:global} establishes its hypotheses for $n\ge200$.

All logarithms are natural. A graph obtained by deleting vertices is
understood to be induced. Spectral radii of nonempty forests are the
largest eigenvalues of their adjacency matrices; the empty forest has
radius zero. We use the Perron--Frobenius theorem, the Rayleigh
principle, eigenvalue interlacing, and Schur complements in their
standard finite-dimensional forms.

\section{Perron chains and a two-boundary estimate}
\label{sec:perron-probability}

Let $G$ be a connected graph on at least two vertices, let $A$ be its
adjacency matrix, and write
\[
 R=\lambda_1(A),\qquad g=R-\lambda_2(A).
\]
Throughout this section, $x$ is the positive Perron vector normalized by
$\sum_v x_v^2=1$, and $X=\operatorname{diag}(x)$.
The transition matrix
\[
 P_{uv}=\frac{A_{uv}x_v}{R x_u}
\]
is stochastic and reversible with respect to $\pi_v=x_v^2$.
Indeed, $Ax=Rx$ gives the row sums, and
$\pi_uP_{uv}=A_{uv}x_ux_v/R$ gives detailed balance.
This transition matrix is also known as the maximal-entropy random walk;
see \cite{BurdaDudaLuckWaclaw2009}.
We use the continuous-time generator
\[
 Q=X^{-1}AX-RI=R(P-I)
\]
and its positive Laplacian $\mathcal L=-Q$.
The map $f\mapsto Xf$ is an isometry from $L^2(\pi)$ to Euclidean space
and conjugates $\mathcal L$ to $RI-A$. Thus the smallest positive
eigenvalue of $\mathcal L$ is exactly $g$.
The conductances and the associated Dirichlet form are
\begin{equation}
 c_{uv}=x_ux_v\quad(uv\in E(G)),\qquad
 \mathcal E(f)=\sum_{uv\in E(G)}c_{uv}(f_u-f_v)^2
 =\langle f,\mathcal Lf\rangle_\pi.
 \label{eq:perron-energy}
\end{equation}
In particular, the variational characterization is
\begin{equation}
 \frac1g=\sup_{f\not\equiv\mathrm{constant}}
 \frac{\operatorname{Var}_\pi(f)}{\mathcal E(f)},\qquad
 \operatorname{Var}_\pi(f)
 =\sum_v\pi_v\Bigl(f_v-\sum_w\pi_w f_w\Bigr)^2.
 \label{eq:perron-variational}
\end{equation}
For a tree, the discrete-time chain $P$ has period two; its absolute
spectral gap is therefore zero. All gap statements below concern the
continuous-time normalization above, so no aperiodicity assumption is
needed.

Fix two distinct vertices $u,v$, and put
$\Gamma=V(G)\setminus\{u,v\}$. Define
\begin{equation}
 \gamma=R-\rho(A_\Gamma)>0
 \label{eq:dirichlet-gamma}
\end{equation}
when $\Gamma$ is nonempty. When $\Gamma$ is empty, set $\gamma=+\infty$
and interpret $\gamma^{-1}=0$ throughout.
The strict inequality in \eqref{eq:dirichlet-gamma} follows from Perron
irreducibility and the fact that $A_\Gamma$ is a proper principal
submatrix. If $r$ vanishes on $\{u,v\}$, restriction of $Xr$ to $\Gamma$
and the Rayleigh principle give
\begin{equation}
 \|r\|_{L^2(\pi)}^2\le\gamma^{-1}\mathcal E(r).
 \label{eq:dirichlet-poincare}
\end{equation}

Let $h$ be the unique function with $h(u)=0$, $h(v)=1$, and
$\mathcal Lh=0$ on $\Gamma$.
Equivalently, $h(w)=\mathbb P_w(\tau_v<\tau_u)$ for the chain with
generator $Q$, where $\tau_a$ denotes the first hitting time of $a$.
The effective resistance between $u$ and $v$ for the conductances
in \eqref{eq:perron-energy} is
\[
 \mathcal R(u,v)=\frac1{\mathcal E(h)}.
\]
This identity can also be taken as its definition: harmonicity makes
$h$ the minimizer of energy among functions with the prescribed boundary
values. Put
\[
 Z_{u,v}=\mathcal R(u,v)\operatorname{Var}_\pi(h).
\]

\begin{theorem}[Two-boundary inverse-gap estimate]
\label{thm:two-boundary}
With the notation above,
\begin{equation}
 0\le\frac1g-Z_{u,v}\le\frac1\gamma.
 \label{eq:two-boundary-bound}
\end{equation}
In particular,
\[
 \frac1{Z_{u,v}+\gamma^{-1}}\le g\le\frac1{Z_{u,v}}.
\]
The constant in the additive error is independent of the number of
vertices.
\end{theorem}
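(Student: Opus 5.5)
The plan is to prove the two inequalities in \eqref{eq:two-boundary-bound} separately, using the variational formula \eqref{eq:perron-variational} and an orthogonal decomposition of test functions into a multiple of $h$ plus a remainder vanishing on $\{u,v\}$.

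\emph{Lower bound $1/g\ge Z_{u,v}$.} Take $f=h$ in \eqref{eq:perron-variational}; $h$ is nonconstant because $h(u)=0\ne1=h(v)$. This gives
\[
 \frac1g\ge\frac{\operatorname{Var}_\pi(h)}{\mathcal E(h)}=\mathcal R(u,v)\operatorname{Var}_\pi(h)=Z_{u,v}.
\]

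\emph{Upper bound $1/g\le Z_{u,v}+\gamma^{-1}$.}

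1. Take any nonconstant $f$. Adding a constant changes neither side of \eqref{eq:perron-variational}, so write $f=f(u)+t h+r$ with $t=f(v)-f(u)$ and $r$ vanishing on $\{u,v\}$.

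2. The energy splits without cross term. Indeed, $\mathcal E(h,r)=\langle r,\mathcal Lh\rangle_\pi=0$, because $\mathcal Lh=0$ on $\Gamma$ and $r=0$ off $\Gamma$. Hence
\[
 \mathcal E(f)=t^2\mathcal E(h)+\mathcal E(r)=\frac{t^2}{\mathcal R(u,v)}+\mathcal E(r).
\]

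3. For the variance, the standard deviation is a seminorm, so by Minkowski
\[
 \sqrt{\operatorname{Var}_\pi(f)}\le|t|\sqrt{\operatorname{Var}_\pi(h)}+\sqrt{\operatorname{Var}_\pi(r)}.
\]
Bound the second term by $\operatorname{Var}_\pi(r)\le\|r\|^2_{L^2(\pi)}\le\gamma^{-1}\mathcal E(r)$, using \eqref{eq:dirichlet-poincare}.

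4. Put $a=t^2/\mathcal R(u,v)$ and $b=\mathcal E(r)$, so that $\operatorname{Var}_\pi(f)\le(\sqrt{Z_{u,v}a}+\sqrt{b/\gamma})^2$. Cauchy--Schwarz gives
\[
 \bigl(\sqrt{Z_{u,v}}\sqrt a+\sqrt{\gamma^{-1}}\sqrt b\bigr)^2\le (Z_{u,v}+\gamma^{-1})(a+b)=(Z_{u,v}+\gamma^{-1})\,\mathcal E(f).
\]
Taking the supremum over $f$ yields $1/g\le Z_{u,v}+\gamma^{-1}$.

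5. When $\Gamma$ is empty, $r\equiv0$ and the bound reads $1/g\le Z_{u,v}$, consistent with the convention $\gamma^{-1}=0$.

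\emph{Remaining assertions.} The displayed bounds on $g$ follow by inverting \eqref{eq:two-boundary-bound}; note $Z_{u,v}>0$ since $h$ is nonconstant. The constant in the additive error is exactly $1$ times $\gamma^{-1}$, which involves no dependence on $|V(G)|$.

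The main obstacle is step 3, the variance of a sum. Expanding the cross covariance directly gives only a crude bound. The fix is to combine the square-root triangle inequality with the weighted Cauchy--Schwarz step, which produces exactly the additive $\gamma^{-1}$ term rather than a multiplicative loss. The other point needing care is the orthogonality $\mathcal E(h,r)=0$ in step 2, which rests on the harmonicity of $h$ on $\Gamma$.
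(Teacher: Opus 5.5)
Your proposal is correct and follows the paper's proof essentially step for step. Both use the test function $h$ for the lower bound. Both decompose $f=f(u)+th+r$, with the cross term $\mathcal E(h,r)=\langle r,\mathcal Lh\rangle_\pi$ vanishing by harmonicity. Both then combine the triangle inequality for the standard deviation with \eqref{eq:dirichlet-poincare}, and finish with Cauchy--Schwarz to produce the additive $\gamma^{-1}$. The only cosmetic difference is that you write out $\operatorname{Var}_\pi(r)\le\|r\|_{L^2(\pi)}^2$ explicitly, where the paper phrases this step as centering being an orthogonal projection.
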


\begin{proof}
The choice $f=h$ in \eqref{eq:perron-variational} proves the lower bound
on $1/g$. For the other direction, write an arbitrary function uniquely as
\[
 f=f(u)+t h+r,\qquad t=f(v)-f(u),\qquad r(u)=r(v)=0.
\]
The bilinear form associated with $\mathcal E$ satisfies
$\mathcal E(h,r)=\langle r,\mathcal Lh\rangle_\pi=0$.
Consequently
\[
 \mathcal E(f)=t^2/\mathcal R(u,v)+\mathcal E(r).
\]
Centering is an orthogonal projection in $L^2(\pi)$. Its triangle
inequality and \eqref{eq:dirichlet-poincare} imply
\[
 \sqrt{\operatorname{Var}_\pi(f)}
 \le |t|\sqrt{\operatorname{Var}_\pi(h)}
       +\sqrt{\mathcal E(r)/\gamma}.
\]
Applying Cauchy--Schwarz to these two terms gives
\[
 \operatorname{Var}_\pi(f)
 \le\bigl(t^2/\mathcal R(u,v)+\mathcal E(r)\bigr)
       \bigl(Z_{u,v}+\gamma^{-1}\bigr).
\]
Now take the supremum in \eqref{eq:perron-variational}.
If $\Gamma$ is empty, then $r=0$, and the same calculation holds with
$\gamma^{-1}=0$.
\end{proof}

The proof uses only reversibility, the Dirichlet variational principle,
and harmonicity. It therefore applies without change to any finite
irreducible reversible continuous-time chain, with $\gamma$ interpreted
as the smallest eigenvalue of its killed Dirichlet operator. For a general
chain with generator $Q$ and stationary probability $\pi$, the
conductances are $c_{uv}=\pi_uQ_{uv}$ for $u\ne v$.

\subsection{An exact two-vertex Schur representation}

Schur reduction of an electrical Laplacian is commonly called Kron
reduction; see \cite{DorflerBullo2013}. We will also retain the derivative
of the adjacency Schur matrix, which records the mass of harmonic
extensions.

Order the vertices with $u,v$ first and write
\[
 A=\begin{pmatrix}A_{\{u,v\}}&B^{\mathsf T}\\B&A_\Gamma\end{pmatrix}.
\]
For $z>\rho(A_\Gamma)$ define
\begin{equation}
 S(z)=zI_2-A_{\{u,v\}}
       -B^{\mathsf T}(zI-A_\Gamma)^{-1}B.
 \label{eq:two-vertex-schur}
\end{equation}
When $\Gamma$ is empty the last term is absent, and this formula is
defined for every $z$.
Set
\begin{equation}
 a=x_u,\quad d=x_v,\quad b=-S_{12}(R),\qquad
 M=S'(R)=I_2+B^{\mathsf T}(RI-A_\Gamma)^{-2}B.
 \label{eq:two-vertex-data}
\end{equation}
Here $M$ is positive definite with nonnegative entries, and $b>0$.
For the latter assertion, either $u$ and $v$ are adjacent, or a
$u$--$v$ path has its internal vertices in a single component of
$\Gamma$, whose resolvent is strictly positive.

\begin{proposition}[Exact Schur formula]
\label{prop:two-vertex}
The normalization and resistance-variance identities are
\begin{equation}
 (a,d)M(a,d)^{\mathsf T}=1,
 \qquad
 Z_{u,v}=\frac{ad\det M}{b}.
 \label{eq:exact-schur-Z}
\end{equation}
Hence, with $\widehat g=b/(ad\det M)$,
\begin{equation}
 \frac{\widehat g}{1+\widehat g/\gamma}\le g\le\widehat g,
 \qquad
 0\le\widehat g-g\le\frac{\widehat g^2}{\gamma+\widehat g}.
 \label{eq:schur-quadratic-error}
\end{equation}
The error term is zero if $\gamma=+\infty$.
There is also the coordinate-free lower bound
\begin{equation}
 g\ge
 \frac{2b}{\sqrt{M_{11}M_{22}}-M_{12}+2b/\gamma}.
 \label{eq:schur-coordinate-free}
\end{equation}
No comparison between $\lambda_2(A)$ and $\rho(A_\Gamma)$ is assumed.
\end{proposition}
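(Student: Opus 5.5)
The plan is to translate everything into Euclidean coordinates through the isometry $f\mapsto Xf$ and to express every quantity as a quadratic form in the boundary values, using harmonic extension. The two identities in \eqref{eq:exact-schur-Z} are then algebraic consequences of $S(R)(a,d)^{\mathsf T}=0$. The bounds \eqref{eq:schur-quadratic-error} and \eqref{eq:schur-coordinate-free} follow from Theorem~\ref{thm:two-boundary}.

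\emph{Step 1: harmonic extensions.} For a boundary vector $\beta=(\beta_u,\beta_v)$, let $E\beta$ be the vector equal to $\beta$ on $\{u,v\}$ and to $(RI-A_\Gamma)^{-1}B\beta^{\mathsf T}$ on $\Gamma$. This is well defined because $R>\rho(A_\Gamma)$. Three facts will be used.
\begin{itemize}
\item The Perron equation restricted to $\Gamma$ gives $x=E(a,d)$. Its restriction to $\{u,v\}$ gives $S(R)(a,d)^{\mathsf T}=0$.
\item For any boundary vectors $\beta,\beta'$, the Euclidean inner product is $\langle E\beta,E\beta'\rangle=\beta M\beta'^{\mathsf T}$, with $M=I_2+B^{\mathsf T}(RI-A_\Gamma)^{-2}B=S'(R)$.
\item The quadratic form of $RI-A$ at $E\beta$ equals $\beta S(R)\beta^{\mathsf T}$. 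This is the standard Schur-complement identity, since $(RI-A)E\beta$ vanishes on $\Gamma$.
\end{itemize}
The first two facts give the normalization $(a,d)M(a,d)^{\mathsf T}=\|x\|^2=1$. The first fact written out reads $S_{11}a=bd$ and $S_{22}d=ba$.

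\emph{Step 2: the resistance.} The function $\mathcal Lh=0$ on $\Gamma$ means $(RI-A)Xh$ vanishes on $\Gamma$, so $\varphi:=Xh=E(0,d)$. By the third fact,
\[
 \mathcal E(h)=\langle\varphi,(RI-A)\varphi\rangle=d^2S_{22}=abd.
\]
Hence $\mathcal R(u,v)=1/(abd)$.

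\emph{Step 3: the variance.} By the isometry,
\[
 \operatorname{Var}_\pi(h)=\|\varphi\|^2-\langle x,\varphi\rangle^2
 =d^2M_{22}-d^2(aM_{12}+dM_{22})^2.
\]
Multiply the first term by the normalization $1=a^2M_{11}+2adM_{12}+d^2M_{22}$ and expand. The cross terms cancel, leaving
\[
 \operatorname{Var}_\pi(h)=a^2d^2(M_{11}M_{22}-M_{12}^2)=a^2d^2\det M.
\]
Combining with Step 2 gives $Z_{u,v}=ad\det M/b$. This is the step where care is needed; the rest is bookkeeping.

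\emph{Step 4: the inequalities.} Inequality \eqref{eq:schur-quadratic-error} is Theorem~\ref{thm:two-boundary} rewritten with $Z=1/\widehat g$. Its left side is $\widehat g/(1+\widehat g/\gamma)$, and the difference is
\[
 \widehat g-\frac{\widehat g}{1+\widehat g/\gamma}=\frac{\widehat g^2}{\gamma+\widehat g}.
\]
When $\gamma=+\infty$ the error is zero.

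For \eqref{eq:schur-coordinate-free}, apply AM--GM to the normalization:
\[
 1\ge 2ad\bigl(\sqrt{M_{11}M_{22}}+M_{12}\bigr),
\]
using $M_{12}\ge0$. Therefore
\[
 ad\det M\le\frac{M_{11}M_{22}-M_{12}^2}{2\bigl(\sqrt{M_{11}M_{22}}+M_{12}\bigr)}=\frac{\sqrt{M_{11}M_{22}}-M_{12}}{2}.
\]
This gives $Z\le(\sqrt{M_{11}M_{22}}-M_{12})/(2b)$. Inserting it into $g\ge1/(Z+\gamma^{-1})$ from Theorem~\ref{thm:two-boundary} yields \eqref{eq:schur-coordinate-free}.

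No comparison between $\lambda_2$ and $\rho(A_\Gamma)$ enters anywhere; only $R>\rho(A_\Gamma)$ is used.
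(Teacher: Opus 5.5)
Your proof is correct and follows essentially the paper's route: harmonic extension with Gram matrix $M=S'(R)$, the Schur identity $\mathcal E(h)=d^2S_{22}(R)=abd$ obtained from $S(R)(a,d)^{\mathsf T}=0$, Theorem~\ref{thm:two-boundary}, and AM--GM applied to the normalization. The one cosmetic difference is in the variance: you expand $\|\varphi\|^2-\langle x,\varphi\rangle^2$ directly, homogenizing with the normalization, whereas the paper obtains the same algebra as $\det G_0$ for the Gram matrix of $1-h$ and $h$, whose entries sum to one.
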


\begin{proof}
Define the harmonic extension matrix
\[
 V=\begin{pmatrix}I_2\\(RI-A_\Gamma)^{-1}B\end{pmatrix}.
\]
Block elimination gives $x=V(a,d)^{\mathsf T}$ and $V^{\mathsf T}V=M$,
which proves the normalization identity.
Let $h_u=1-h$ and $h_v=h$. Their adjacency extensions are
\[
 Xh_u=aV e_1,\qquad Xh_v=dV e_2.
\]
Thus their Gram matrix in $L^2(\pi)$ is
\[
 G_0=\begin{pmatrix}a&0\\0&d\end{pmatrix}
       M\begin{pmatrix}a&0\\0&d\end{pmatrix}.
\]
Since $h_u+h_v=1$ and $\pi$ has total mass one, the sum of the entries
of $G_0$ is one. Expanding the determinant now gives
\[
 \operatorname{Var}_\pi(h)=\det G_0=a^2d^2\det M.
\]
Also $S(R)(a,d)^{\mathsf T}=0$, so
\[
 \begin{pmatrix}a&0\\0&d\end{pmatrix}
 S(R)\begin{pmatrix}a&0\\0&d\end{pmatrix}
 =ad b\begin{pmatrix}1&-1\\-1&1\end{pmatrix}.
\]
This is the Schur complement of the electrical Laplacian
$X(RI-A)X$ onto the boundary. Equivalently, evaluating the energy of
the harmonic extension of $(0,1)$ gives $\mathcal E(h)=ad b$.
It follows that $\mathcal R(u,v)=1/(ad b)$, proving
\eqref{eq:exact-schur-Z}.
Theorem~\ref{thm:two-boundary} then gives
\eqref{eq:schur-quadratic-error} by inversion and subtraction.

Finally put $m=\sqrt{M_{11}M_{22}}$ and $c=M_{12}$.
The normalization identity and the arithmetic--geometric mean inequality
give $1\ge 2ad(m+c)$. Therefore
\[
 Z_{u,v}=ad(m^2-c^2)/b\le (m-c)/(2b).
\]
Substituting this upper bound in the lower bound for $g$ proves
\eqref{eq:schur-coordinate-free}. The denominator is positive because
$M$ is positive definite and hence $m>c\ge0$.
\end{proof}

\begin{proposition}[Antisymmetric harmonic certificate]
\label{prop:antisymmetric-certificate}
Suppose $G$ has an automorphism exchanging $u$ and $v$. Write
$M_{11}=M_{22}=m$ and $M_{12}=c$. Then
\begin{equation}
 Z_{u,v}=\frac{m-c}{2b},\qquad
 g\le\widehat g=\frac{2b}{m-c}.
 \label{eq:antisymmetric-certificate}
\end{equation}
The second-order error in \eqref{eq:schur-quadratic-error} applies to
this value of $\widehat g$.
Moreover, the upper bound in \eqref{eq:antisymmetric-certificate} follows
directly from a Rayleigh quotient, independently of
Theorem~\ref{thm:two-boundary}.
\end{proposition}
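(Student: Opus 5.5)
The plan is to use the automorphism $\sigma$ exchanging $u$ and $v$ to identify $a$ and $d$, and then substitute into the exact Schur formula of Proposition~\ref{prop:two-vertex}; the Rayleigh statement comes from one explicit antisymmetric test vector.

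First, $\sigma$ fixes $\Gamma$ setwise. After reordering, its permutation matrix conjugates $A$ to itself, swaps the two boundary coordinates, and preserves $A_\Gamma$. Hence $S(z)$ commutes with the swap $J=\begin{pmatrix}0&1\\1&0\end{pmatrix}$, so $M_{11}=M_{22}=m$. Simplicity of the Perron vector gives $x\circ\sigma=x$, so $a=d$. The normalization identity in \eqref{eq:exact-schur-Z} then reads $a^2(2m+2c)=1$, that is, $a^2=1/(2(m+c))$. Substituting into $Z_{u,v}=a^2\det M/b$ with $\det M=m^2-c^2$ gives $Z_{u,v}=(m-c)/(2b)$. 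This is the equality case of the AM--GM step in the proof of \eqref{eq:schur-coordinate-free}. The bound $g\le\widehat g=2b/(m-c)$ is then Theorem~\ref{thm:two-boundary}, and since $\widehat g=1/Z_{u,v}$ is the same quantity as in Proposition~\ref{prop:two-vertex}, the error bound \eqref{eq:schur-quadratic-error} applies verbatim.

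For the direct Rayleigh argument, take the antisymmetric harmonic vector
\[
y=V(1,-1)^{\mathsf T}=\begin{pmatrix}(1,-1)^{\mathsf T}\\ (RI-A_\Gamma)^{-1}B(1,-1)^{\mathsf T}\end{pmatrix},
\]
which is the adjacency-side image of $h_u-h_v$ up to scaling. It satisfies $y\circ\sigma=-y$, while $x\circ\sigma=x$, so $\langle y,x\rangle=0$. By the Rayleigh principle, $R-\lambda_2(A)\le\langle y,(RI-A)y\rangle/\langle y,y\rangle$. The denominator is $(1,-1)V^{\mathsf T}V(1,-1)^{\mathsf T}=2(m-c)$. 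For the numerator, $y$ is harmonic on $\Gamma$, meaning $(RI-A)y$ vanishes on $\Gamma$, so block elimination reduces the quadratic form to $(1,-1)S(R)(1,-1)^{\mathsf T}$.

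The main point to check is this numerator. The relation $S(R)(a,a)^{\mathsf T}=0$ forces $S_{11}(R)=S_{22}(R)=b$ (equal by the symmetry), so $S(R)=b\begin{pmatrix}1&-1\\-1&1\end{pmatrix}$ and the numerator equals $4b$. The quotient is therefore $4b/(2(m-c))=2b/(m-c)$, which proves the upper bound without the probabilistic estimate. Positivity of $m-c$ follows from positive definiteness of $M$, as in the previous proof.
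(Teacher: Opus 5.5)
Your proposal is correct and follows essentially the same route as the paper. You substitute $a=d$ and the normalization $a^2=1/(2(m+c))$ into the exact Schur formula, and then use the antisymmetric harmonic extension $V(1,-1)^{\mathsf T}$, which is orthogonal to $x$ via the automorphism and whose Rayleigh quotient reduces by block elimination to $4b/(2(m-c))$ using $S_{11}(R)=S_{22}(R)=b$. Your extra remarks, that $S(z)$ commutes with the swap (so $M_{11}=M_{22}$) and that $\widehat g=1/Z_{u,v}$ is the same quantity as in Proposition~\ref{prop:two-vertex}, only make explicit what the paper leaves implicit.
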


\begin{proof}
Symmetry gives $a=d$, and \eqref{eq:exact-schur-Z} gives
$a^2=1/[2(m+c)]$. Substitution yields the formula for $Z_{u,v}$.
For the direct argument, let $w=(1,-1)^{\mathsf T}$ and $y_0=Vw$.
Uniqueness of harmonic extension implies that the automorphism sends
$y_0$ to $-y_0$, whereas it fixes $x$. Hence $y_0\perp x$.
Since $S(R)(1,1)^{\mathsf T}=0$, its diagonal entries are both $b$.
Block elimination consequently gives
\[
 y_0^{\mathsf T}(RI-A)y_0=w^{\mathsf T}S(R)w=4b,
 \qquad \|y_0\|^2=w^{\mathsf T}Mw=2(m-c).
\]
The Rayleigh principle on $x^\perp$ proves the asserted upper bound.
\end{proof}

\subsection{Identical rooted ends joined by a path}

Let $(H,u)$ be a rooted tree. Join two copies of $H$ by a path of
$L\ge1$ edges between their roots, using $L-1$ new internal vertices.
Call the resulting tree $G$ and the root of the second copy $v$.
Let $A_0=A_{H-u}$, let $b_0$ be the incidence vector of the neighbors
of $u$ in $H-u$, and define the root response
\[
 \tau(z)=b_0^{\mathsf T}(zI-A_0)^{-1}b_0.
\]
If $H$ consists of its root alone, set $\tau=0$.
For the path polynomials use
\[
 p_{-1}(z)=0,\qquad p_0(z)=1,\qquad
 p_j(z)=z p_{j-1}(z)-p_{j-2}(z)\quad(j\ge1).
\]
Thus $p_j(z)=\det(zI-A_{P_j})$ for $j\ge0$.
Eliminating the path interior gives its endpoint diagonal response and
off-diagonal coupling as
\[
 t_L(z)=\frac{p_{L-2}(z)}{p_{L-1}(z)},\qquad
 b_L(z)=\frac1{p_{L-1}(z)}.
\]
These formulas include $L=1$, for which $t_1=0$ and $b_1=1$.
They follow either from the recurrence for $p_j$ and Cramer's rule, or
by eliminating the internal path vertices consecutively.
For $L=2$ the two endpoints share a single internal vertex; the same
formula gives both responses equal to $1/z$.
The full two-root Schur matrix is therefore
\[
 S(z)=\begin{pmatrix}
 z-\tau(z)-t_L(z)&-b_L(z)\\
 -b_L(z)&z-\tau(z)-t_L(z)
 \end{pmatrix}.
\]
Put
\[
 h_+(z)=z-\tau(z)-t_L(z)+b_L(z).
\]
This is the Schur eigenvalue in the antisymmetric direction. At the
Perron root $R$ of the joined tree,
$h_+'(R)=m-c>0$. Consequently
\begin{equation}
 \widehat g=\frac{2b_L(R)}{h_+'(R)},\qquad
 \frac{\widehat g}{1+\widehat g/\gamma}\le g\le\widehat g.
 \label{eq:identical-caps-certificate}
\end{equation}
Here $\gamma=R-\rho(G-\{u,v\})$ with the same empty-interior
convention as before. Every expression is evaluated above the spectra
of the eliminated blocks. In particular,
\eqref{eq:identical-caps-certificate} does not require $\lambda_2(G)>2$.

\section{A uniform upper bound for balanced double comets}
\label{sec:comet-comparator}

For integers $k\ge1$ and $\ell\ge2$, let $C(k,\ell)$ be the tree obtained
from a path on $\ell$ vertices by adjoining $k$ leaves to each endpoint.
It has $2k+\ell$ vertices. Its two endpoints will be called hubs.

\begin{theorem}
\label{thm:comet-upper}
For every $k\ge3$ and $\ell\ge2$,
\begin{equation}
 g(C(k,\ell))\le(k-1)^{-(\ell-2)/2}.
 \label{eq:comet-factor-one}
\end{equation}
Equality holds when $\ell=2$. For $\ell\ge3$ one has the stronger bound
\begin{equation}
 g(C(k,\ell))<
 \left(1-\frac{2}{k(k+1)}\right)(k-1)^{-(\ell-2)/2}.
 \label{eq:comet-factor-one-strong}
\end{equation}
\end{theorem}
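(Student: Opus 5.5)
The plan is to compute $\lambda_1$ and $\lambda_2$ of $C(k,\ell)$ directly from the symmetric and antisymmetric reductions, and then bound their difference by the mean value theorem. This is the Rayleigh alternative allowed by Proposition~\ref{prop:antisymmetric-certificate}, and it avoids Theorem~\ref{thm:two-boundary}.

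\textbf{Setup.} Take $(H,u)=K_{1,k}$ rooted at its centre, so $\tau(z)=k/z$, and join two copies by a path with $L=\ell-1$ edges. The hub reflection splits $\mathbb R^{V}$ into symmetric and antisymmetric vectors. Eliminating the leaves and the path interior, the largest eigenvalue in each sector is the largest root, above $\rho$ of the eliminated blocks, of
\[
 F_\mp(z)=z-\frac kz-t_L(z)\mp b_L(z)=0 .
\]
The Perron root $R$ is the largest root of $F_-$, since the Perron vector is symmetric. Write $R'$ for the largest root of $F_+=h_+$. Then $\lambda_2\ge R'$, because the corresponding antisymmetric vector is orthogonal to $x$; this is the Rayleigh argument of Proposition~\ref{prop:antisymmetric-certificate}. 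Hence $g\le R-R'$.

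\textbf{Mean value step.} Put $f(z)=z-k/z-t_L(z)$. Then
\[
 f(R)=b_L(R),\qquad f(R')=-b_L(R').
\]
Since $t_L=p_{L-1}/p_L$ is decreasing above the largest path root, $f'(z)\ge 1+k/z^2$ there. By the mean value theorem,
\[
 R-R'\le \frac{b_L(R)+b_L(R')}{\min_{[R',R]}f'}\le \frac{2\,b_L(R')}{1+k/R^2}=\frac{2}{(1+k/R^2)\,p_{\ell-2}(R')}.
\]
For $\ell=2$ everything is explicit: $b_1=1$ and $t_1=0$. The roots are $\tfrac12\bigl(1+\sqrt{1+4k}\bigr)$ and $\tfrac12\bigl(-1+\sqrt{1+4k}\bigr)$, so $g=1$, which gives the equality case.

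\textbf{Two estimates to supply for $\ell\ge3$.}
\begin{enumerate}
\item An upper bound on $R^2$, to make $2/(1+k/R^2)$ close to $1$. From $F_-(R)=0$ we get $R^2=k+R\,(t_L+b_L)$. Since $t_L+b_L\le 1/z$ roughly, this should give $R^2\le k+1+O(1/k)$, hence
\[
 \frac{2}{1+k/R^2}\le \frac{2(k+1)}{2k+1}+\text{small}.
\]
\item A lower bound on the path factor, namely $p_{\ell-2}(R')\ge c_k\,(k-1)^{(\ell-2)/2}$. Write $R'=q+q^{-1}$. Then
\[
 p_j(R')=\frac{q^{j+1}-q^{-j-1}}{q-q^{-1}}\ge q^{\,j}\cdot\frac{1-q^{-2j-2}}{1-q^{-2}},
\]
so it suffices to show $q^2\ge k-1$, i.e. $R'\ge k/\sqrt{k-1}$, equivalently $R'^2\ge k+1+1/(k-1)$. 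I would check this by evaluating $F_+$ at $z_0=k/\sqrt{k-1}$. Using $t_L(z_0)\le 1/q=1/\sqrt{k-1}$ and the explicit $b_L(z_0)=(q-q^{-1})/(q^{L}-q^{-L})$, I would verify $F_+(z_0)\le0$, placing $R'$ above $z_0$.
\end{enumerate}

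\textbf{Combining, and the expected obstacle.} The crude product of the two estimates is only of order $(k-1)^{-(\ell-2)/2}\cdot 2(k+1)/(2k+1)$, which exceeds the target. So the main obstacle is closing this slack. I would try first to recover it from the extra factor $(1-q^{-2j-2})/(1-q^{-2})>1$ in $p_j$, together with the strict gain $q^2>k-1$ left over from the check $F_+(z_0)\le0$. Since $q^{-2}\approx 1/(k-1)$, these gains should multiply to at least $(2k+1)/(2k+2)$ times the required margin $\bigl(1-\tfrac2{k(k+1)}\bigr)^{-1}$. Making that inequality uniform in $k\ge3$ and $\ell\ge3$ is where the real arithmetic work lies. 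The hypothesis $k\ge3$ should enter exactly in guaranteeing $q\ge\sqrt2$ and $R'>2$ in this comparison.
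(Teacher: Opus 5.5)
Your reduction $g\le R-R'$ and the mean-value identity are correct. The estimate meant to sharpen them fails, and in the wrong direction. The point $z_0=k/\sqrt{k-1}=q_*+q_*^{-1}$, with $q_*=(k-1)^{-1/2}$, separates the two roots: $R>z_0>R'$. Indeed $z_0-k/z_0=q_*$. For $z=q+q^{-1}$ with $0<q<1$ one has $t_L(z)-b_L(z)=q(1-q^{L-2})/(1-q^L)$, so
\[
 F_+(z_0)=\frac{q_*^{L-1}(1-q_*^2)}{1-q_*^L}>0 .
\]
Since $F_+$ is increasing above the eliminated spectrum (its derivative is the antisymmetric part of the positive definite Schur derivative), $R'<z_0$ for every $k\ge3$ and $\ell\ge3$.

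So there is no gain $q^2>k-1$ at $R'$; on the contrary, $b_L(R')>b_L(z_0)$. The slack cannot be closed with your ingredients. For $\ell=3$ everything is explicit: $R'=\sqrt k$ and $R=\sqrt{k+2}$. Your bound $2b_L(R')/(1+k/R^2)$ then equals $(k+2)/((k+1)\sqrt k)$. This exceeds even the weak target $(k-1)^{-1/2}$ for all $k\ge3$, since that is equivalent to $k^2-k-4>0$.

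Your estimate 1 is also too optimistic. Only $t_L+b_L\le 2/z$ holds uniformly, with equality at $L=2$, where $R^2=k+2$ rather than $k+1+O(1/k)$. Finally, $k\ge3$ does not force $R'>2$: for $k=\ell=3$ one has $R'=\sqrt3$.

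The missing idea is to bypass the true antisymmetric eigenvalue. Use instead the odd harmonic certificate of Proposition~\ref{prop:antisymmetric-certificate} at the Perron root itself. The vector $y_0=V(1,-1)^{\mathsf T}$, built with the resolvent at $z=R$, is orthogonal to $x$ by symmetry. Its Rayleigh quotient gives $g\le 2b_L(R)/h_+'(R)$, with everything evaluated at $R$, on the favorable side of $z_0$.

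From $q<t_L+b_L\le 2/z$ one gets $F_-(z_0)<0\le F_-(\sqrt{k+2})$, hence $z_0<R\le\sqrt{k+2}$. This gives two bounds:
\[
 h_+'(R)\ge 1+\frac{k}{R^2}\ge\frac{2(k+1)}{k+2},
 \qquad
 b_L(R)<b_L(z_0)\le\frac{k-1}{k}(k-1)^{-(\ell-2)/2}.
\]
The factor $\frac{k-1}{k}$ is exactly your factor $(1-q^{-2j-2})/(1-q^{-2})\ge k/(k-1)$, but evaluated at $z_0$ rather than at $R'$. The product of the two bounds is $\bigl(1-\frac{2}{k(k+1)}\bigr)(k-1)^{-(\ell-2)/2}$, which is the strong bound.
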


\begin{proof}
Write $L=\ell-1$. If $L=1$, for nonzero eigenvalues the two hub equations are
$z-k/z=\pm1$. The two positive eigenvalues are
\[
 \frac{\sqrt{4k+1}+1}{2},\qquad
 \frac{\sqrt{4k+1}-1}{2},
\]
and all remaining eigenvalues are their negatives or zero.
Their difference is $1$, proving the equality assertion.

Assume henceforth that $L\ge2$. The root response of either leaf cluster
is $\tau(z)=k/z$, so the symmetric Schur eigenvalue is
\[
 F_L(z)=z-\frac{k}{z}-t_L(z)-b_L(z),\qquad z>2.
\]
All eliminated blocks have spectral radius below $2$.
The Schur derivative is positive definite, hence $F_L'(z)>0$.
Any zero of $F_L$ above $2$ gives a positive eigenvector by harmonic
extension of the hub values $(1,1)$, and is therefore the Perron root.
For $z=q+q^{-1}$ with $0<q<1$, the path recurrence gives
\[
 p_{L-1}(z)=q^{-(L-1)}\frac{1-q^{2L}}{1-q^2},
 \qquad
 t_L(z)+b_L(z)=\frac{q+q^{L-1}}{1+q^L}.
\]
In particular,
\[
 q<t_L(z)+b_L(z)\le\frac2z.
\]
The first inequality follows by subtracting $q$; the second follows from
\[
 \frac2z-t_L(z)-b_L(z)
 =\frac{(1-q^2)(q-q^{L-1})}{(1+q^2)(1+q^L)}\ge0.
\]
Put $t=k-1$, $q_*=t^{-1/2}$, $\Lambda=q_*+q_*^{-1}>2$, and
$z_0=\sqrt{k+2}$. Since $\Lambda-k/\Lambda=q_*$ and
$z_0-k/z_0=2/z_0$, we have $F_L(\Lambda)<0\le F_L(z_0)$.
Thus $F_L$ has a unique zero in $(\Lambda,z_0]$, and
\begin{equation}
 \Lambda<R\le\sqrt{k+2}.
 \label{eq:comet-root-upper}
\end{equation}

Proposition~\ref{prop:antisymmetric-certificate}, or its direct odd
harmonic Rayleigh argument, gives
\[
 g\le\frac{2b_L(R)}{h_+'(R)},\qquad
 h_+(z)=z-\frac{k}{z}-t_L(z)+b_L(z).
\]
In the Schur derivative \eqref{eq:two-vertex-data}, the leaves contribute
$k/R^2$ in each hub direction, and the internal path contributes a
positive semidefinite matrix. Hence
\[
 h_+'(R)\ge1+\frac{k}{R^2}\ge\frac{2(k+1)}{k+2}.
\]
For $z>2$, the polynomial $p_{L-1}(z)$ is positive and strictly
increasing, since its logarithmic derivative is
$\operatorname{tr}(zI-A_{P_{L-1}})^{-1}>0$.
Therefore
\[
 b_L(R)<b_L(\Lambda)
 =\frac{1-t^{-1}}{1-t^{-L}}t^{-(L-1)/2}
 \le\frac{t}{t+1}t^{-(L-1)/2},
\]
where the last inequality uses $L\ge2$. Combining these estimates yields
\[
 g<\frac{(k-1)(k+2)}{k(k+1)}(k-1)^{-(L-1)/2},
\]
which is \eqref{eq:comet-factor-one-strong}.
\end{proof}

\begin{corollary}
\label{cor:small-gap}
Let $g_n$ be the minimum adjacency spectral gap among all trees on
$n$ vertices. Then $g_n\le n^{-3}$ for every integer $n\ge35$.
\end{corollary}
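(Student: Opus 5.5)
The plan is to pick one admissible double comet and apply Theorem~\ref{thm:comet-upper}. Since $g_n\le g(C(k,\ell))$ for every $k\ge3$, $\ell\ge2$ with $2k+\ell=n$, it suffices to find, for each $n\ge35$, an integer $k\ge3$ with $\ell=n-2k\ge2$ and
\[
 (k-1)^{-(\ell-2)/2}\le n^{-3},\qquad\text{equivalently}\qquad (\ell-2)\log(k-1)\ge 6\log n .
\]
Both $k$ and $\ell$ should be of comparable size, since the left side is roughly $n\log k$ when $\ell\approx n-2k$ and $k$ is moderate. A fixed small $k$ is the natural first try: with $k=4$ we get $\ell-2=n-10$ and need $(n-10)\log 3\ge 6\log n$.

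First I check the fixed choice $k=4$. The function $\phi(n)=(n-10)\log3-6\log n$ has derivative $\log 3-6/n>0$ for $n\ge6$, so $\phi$ is increasing on $[35,\infty)$. It therefore suffices to verify $\phi(35)\ge0$: $25\log3\approx27.47$ against $6\log35\approx21.33$, so the inequality holds with room to spare. This gives $g_n\le g(C(4,n-8))\le 3^{-(n-10)/2}\le n^{-3}$ for all $n\ge35$. Here $\ell=n-8\ge27\ge2$, so Theorem~\ref{thm:comet-upper} applies.

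If the margin at $n=35$ should fail, as a sanity check on the threshold, $k=3$ gives $(n-8)\log2\ge6\log n$. At $n=35$ this reads $27\cdot0.693\approx18.7$ versus $21.3$, which fails. This explains why some $k\ge4$ is needed and why the threshold $35$ is not tight for $k=4$. One could also use the strict bound \eqref{eq:comet-factor-one-strong}, but it is unnecessary.

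The only real obstacle is the numerical verification at the base case $n=35$ together with the monotonicity argument. Both are elementary, so I expect no difficulty beyond stating the numeric inequalities $\log3>1.09$ and $\log35<3.56$ explicitly, which give $25\cdot1.09=27.25>21.36=6\cdot3.56$.
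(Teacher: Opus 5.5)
Your proof is correct. It follows the same strategy as the paper: exhibit one explicit balanced double comet of order $n$ and apply \eqref{eq:comet-factor-one}. The only difference is the choice of parameter.

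The paper takes $k=\lfloor n/4\rfloor$ and $\ell=n-2k$, and verifies the bound symbolically through $k-1\ge n/5$, $(\ell-2)/2\ge n/5$ and $\log(n/5)\ge\tfrac12\log n$. This actually gives the much stronger estimate $g_n\le n^{-n/10}$.

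Your fixed choice $k=4$ gives only $g_n\le 3^{-(n-10)/2}$. That is still more than the $n^{-3}$ used later in the paper. Your verification reduces to the monotonicity of $(n-10)\log 3-6\log n$ plus one numerical check at $n=35$; in fact $k=4$ already works from $n=29$ on.

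For a rational certificate in the style of the appendix, you can replace the rather tight $\log 35<3.56$ by the cruder $\log 35<\log 48<4$. This still suffices, since $6\cdot 4=24<25\cdot\tfrac{109}{100}$.
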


\begin{proof}
Set $k=\lfloor n/4\rfloor$ and $\ell=n-2k$.
For $n\ge35$ these parameters are admissible in
Theorem~\ref{thm:comet-upper}, and
\[
 k-1\ge\frac{n-7}{4}\ge\frac n5,
 \qquad \frac{\ell-2}{2}\ge\frac n4-1\ge\frac n5.
\]
Also $\log(n/5)\ge\tfrac12\log n$ for $n\ge25$.
Consequently
\[
 \frac{\ell-2}{2}\log(k-1)
 \ge\frac n{10}\log n\ge3\log n.
\]
Apply \eqref{eq:comet-factor-one} to $C(k,\ell)$.
\end{proof}

\section{Rooted distance and path budgets}
\label{sec:rooted-budgets}

Throughout, a tree is finite, simple, and connected.  For a graph $H$, write
$A_H$ for its adjacency matrix and $\rho(H)$ for its spectral radius.
All matrix norms below are operator norms on Euclidean space.  We use two
elementary consequences of bipartiteness repeatedly.  If a forest has $e$
edges, then
\begin{equation}
 \rho(H)^2\le \sum_{\theta>0}\theta^2=e,
 \label{eq:forest-edge-bound}
\end{equation}
where the sum includes multiplicities.  Indeed, the spectrum is symmetric
about zero and $\operatorname{tr}(A_H^2)=2e$.  In particular, a tree of order
$h$ satisfies $\rho(H)^2\le h-1$.  Also, if $p$ is the unit positive Perron
vector of a nontrivial tree, its squared masses on the two bipartition
classes are both $1/2$.  To see this, write its Perron equations as
$r p_X=Cp_Y$ and $r p_Y=C^{\mathsf T}p_X$, and take the scalar products with
$p_X$ and $p_Y$.  Consequently
\begin{equation}
 p_v^2\le\frac12 \qquad(v\in V(H)).
 \label{eq:bipartite-coordinate-bound}
\end{equation}

\subsection{The distance from a root to a Perron maximum}

For a rooted tree $(H,a)$, let $r=\rho(H)>2$, let $p$ be its unit positive
Perron vector, and choose a vertex $w$ with maximum Perron coordinate that
is nearest to $a$.  Set
\begin{equation}
 h=|V(H)|,\qquad D=\operatorname{dist}(a,w),\qquad
 N=\lfloor r^2\rfloor,\qquad E=h-N-D.
 \label{eq:rooted-budget-notation}
\end{equation}
Write $Q$ for the $a$--$w$ path, including its endpoints.  Distances are
measured in edges.  A \emph{broom relative to $w$} is a tree obtained from
$Q$ by attaching leaves only at $w$.

\begin{lemma}[Rooted distance budget]
\label{lem:rooted-budget}
With the notation in \eqref{eq:rooted-budget-notation},
\[
 D\le h-\lfloor r^2\rfloor,
 \qquad\text{equivalently } E\ge0.
\]
\end{lemma}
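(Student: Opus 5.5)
The plan is to prove the inequality in the form $r^2<h-D+1$. Since $N=\lfloor r^2\rfloor$ is an integer, this gives $N\le h-D$, that is, $E\ge0$. For $D=0$ the claim is just \eqref{eq:forest-edge-bound} applied to the whole tree, since $r^2\le h-1<h+1$. So the work is to show that each step of the $a$--$w$ path costs about one unit of $r^2$ below the trivial bound $h-1$.

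The route is the trace identity behind \eqref{eq:forest-edge-bound}. Because $H$ is bipartite, both $r$ and $-r$ are eigenvalues. The second has eigenvector $p'$, equal to $p$ with its sign flipped on one bipartition class. Hence
\[
 (h-1)-r^2=\tfrac12\operatorname{tr}\bigl(A_H^2\bigr)-r^2
 =\tfrac12\operatorname{tr}\bigl(A_H^2(I-pp^{\mathsf T}-p'p'^{\mathsf T})\bigr)\ge0.
\]
Writing $\operatorname{tr}(A_H^2)=\sum_v\deg v$ and $r^2=\sum_v r^2p_v^2$, together with the Perron equation $rp_v=\sum_{u\sim v}p_u$, this becomes
\[
 2\bigl((h-1)-r^2\bigr)=\sum_{v}\Bigl(\deg v-2\bigl(\textstyle\sum_{u\sim v}p_u\bigr)^2\Bigr).
\]
Since the left side is nonnegative in any case, the goal is to show that the total deficit is strictly greater than $2(D-2)$. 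That gives $(h-1)-r^2>D-2$, i.e. $r^2<h-D+1$.

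The key steps, in order, are as follows. First, label the path $Q$ as $a=q_0,q_1,\dots,q_D=w$. Because $w$ is a nearest maximum, $p_{q_i}<p_w$ for all $i<D$. Second, use \eqref{eq:bipartite-coordinate-bound} ($p_v^2\le\frac12$) and these strict inequalities to show that each interior path vertex, or each consecutive pair, contributes a definite positive deficit. For a path vertex $q_i$ of degree $2$, the neighbour sum is $p_{q_{i-1}}+p_{q_{i+1}}=rp_{q_i}$, and $r>2$ forces the Perron values near $q_i$ to be small relative to the total mass. Branches hanging off $q_i$ carry extra degree that must be compensated. The natural way to organise this is to group each $q_i$ with the branch $T_i$ hanging from it. Then compare $\sum_{v\in T_i}(\deg v-2r^2p_v^2)$ with the same quantity computed for a star-like model, using $r>2$ to gain at least $2$ per path vertex.

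If a direct local accounting proves awkward, the fallback is induction on $D$. Detach the branch at $q_0$ away from the path and reattach it at $q_1$. This leaves $h$ unchanged, lowers $D$ by one, keeps $w$ as a maximum, and increases $r$, which is the direction needed. Then verify, by comparing Rayleigh quotients with the vector $p$, that $r^2$ increases by at least $1$ under this move, up to a fractional slack that the floor in $N=\lfloor r^2\rfloor$ absorbs.

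The main obstacle is the per-step gain of a full unit. The trivial estimates only give that every deficit is nonnegative. Extracting $1$ per path edge requires using both $r>2$ and the nearest-maximum property together. The delicate case is an interior path vertex carrying a large branch, whose local deficit can be small. There I would borrow deficit from the branch itself, using the Perron equation on $T_i$ and the fact that $p$ is not maximised there. The endpoint $q_0=a$, where the slack of one unit in $r^2<h-D+1$ is spent, is the only place where the count may be off by one.
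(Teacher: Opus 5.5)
Your reduction is sound. Since $h-D+1$ is an integer, $E\ge0$ is equivalent to $r^2<h-D+1$, i.e.\ to $(h-1)-r^2>D-2$, and your trace identity correctly rewrites $2((h-1)-r^2)$ as a sum of vertex deficits. But the proposal stops exactly where the content of the lemma begins. It asserts that each path vertex yields a deficit close to $2$, compares with a ``star-like model'', and ``borrows'' from a heavy branch, yet no inequality in it produces any of these.

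There is also very little room. Take a broom relative to $w$ with $\deg w=d\ge4$. Then $h=D+d$ and $r^2=d+\theta$, where $\theta=p_{q_{D-2}}/p_w\in(0,1)$. So $E=0$, and $(h-1)-r^2=D-1-\theta$ exceeds $D-2$ by less than one unit; any accounting must be essentially exact.

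Your fallback induction fails on this same example. The branch at $q_0$ off the path is empty, so the move only re-roots the tree at $q_1$ and leaves $r$ unchanged. The inductive step, however, needs the move to raise $\lfloor r^2\rfloor$ by at least one: from $E'=E+1\ge0$ you learn only $E\ge-1$. More generally, relocating a branch of tiny Perron mass changes $r$ only slightly. You also give no reason why $w$ remains a nearest maximum once the Perron vector has changed.

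The missing idea is that no spectral accounting along the path is needed, because all of $r^2$ is supported inside the radius-two ball $K$ about $w$.
\begin{itemize}
\item In a tree, squaring the Perron equation at $w$ gives $r^2=\deg w+\sum_{\operatorname{dist}(w,v)=2}p_v/p_w$.
\item Each ratio is at most one. For $D\ge2$, the path vertex $q_{D-2}$ lies at distance two and has $p_{q_{D-2}}<p_w$ by the nearest-maximum choice.
\item Hence $r^2<\deg w+s$, where $s$ counts the vertices at distance two. Integrality gives $|K|=1+\deg w+s\ge\lfloor r^2\rfloor+2$.
\item Since $Q$ meets $K$ in exactly three vertices, $h\ge|K\cup V(Q)|=|K|+D-2\ge\lfloor r^2\rfloor+D$.
\item The cases $D\le1$ follow from $r^2\le h-1$.
\end{itemize}
Off-path branches only add vertices, so your ``delicate case'' of a heavy branch at an interior path vertex never arises. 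The extra unit of slack comes from $q_{D-2}$ next to $w$, not from the endpoint $a$.
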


\begin{proof}
The cases $D=0,1$ follow from $r^2\le h-1$.  Suppose $D\ge2$, and put
$d=\deg_H(w)$ and $s=|\{v:\operatorname{dist}(w,v)=2\}|$.  The squared
Perron equation at $w$ reads
\begin{equation}
 r^2=d+\sum_{\operatorname{dist}(w,v)=2}\frac{p_v}{p_w}.
 \label{eq:squared-perron-row}
\end{equation}
All ratios are at most one.  The vertex of $Q$ at distance two from $w$
has a strictly smaller coordinate than $w$, by the nearest-maximum choice.
Thus $r^2<d+s$, so the radius-two ball $K$ about $w$ has at least $N+2$
vertices.  Its intersection with $Q$ consists of exactly three vertices.
It follows that
\[
 h\ge |V(K)\cup V(Q)|=|V(K)|+D-2\ge N+D.
\]
\end{proof}

\begin{lemma}[Distance-two deficit]
\label{lem:distance-two}
In the same notation, let $s$ be the number of vertices at distance two
from $w$. Then
\begin{equation}
 s\le E+1+\frac{E+1}{1-1/r}
   =(E+1)\frac{2r-1}{r-1}.
 \label{eq:distance-two-deficit}
\end{equation}
In particular, if $r>4$ and $E\le3$, then $s\le9$.
\end{lemma}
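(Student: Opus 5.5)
The plan is to combine two facts: the squared Perron row \eqref{eq:squared-perron-row} at $w$, and a vertex count in $H$ that now also includes the vertices at distance three from $w$. Write $d=\deg_H(w)$ and let $S$ be the set of vertices at distance two from $w$, so $s=|S|$. For $v\in S$ put $\rho_v=p_v/p_w\le1$. Then \eqref{eq:squared-perron-row} reads $r^2=d+\sum_{v\in S}\rho_v$, and since $N=\lfloor r^2\rfloor>r^2-1$ we get
\[
 d+\sum_{v\in S}\rho_v<N+1 .
\]
The vertex count will produce a competing inequality of the form $d+s+\sum_v c_v\le N+E+1$ (up to a correction from $Q$), where $c_v$ is the number of children of $v$ when $H$ is rooted at $w$. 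Subtracting the two eliminates $d$ and $N$.

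First I bound each ratio $\rho_v$ by the number of children of $v$. The Perron equation at $v\in S$ gives
\[
 r p_v=p_{u}+\sum_{y\text{ child of }v}p_y\le(1+c_v)p_w,
\]
where $u$ is the parent of $v$. Hence $c_v\ge r\rho_v-1$. Second, I count vertices. The closed ball of radius two about $w$, together with all children of vertices of $S$, contains at least $1+d+s+\sum_v c_v$ distinct vertices. The path $Q$ contributes at most $D-3$ further vertices beyond distance three when $D\ge3$. Those counted at distance two or three on $Q$ are already included, and the cases $D\le2$ are handled similarly using $r^2\le h-1$. This gives $h\ge d+s+\sum_v c_v+D-2$, that is,
\[
 d+s+\sum_{v\in S}(r\rho_v-1)\le N+E+1 .
\]
Subtracting the Perron-row inequality yields $(r-1)\sum_v\rho_v\le E+1$ plus a bounded overlap correction. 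Inserting this back into the count yields
\[
 s-\sum_v\rho_v\le E+1 .
\]
Together these give $s\le E+1+\sum_v\rho_v$ with $\sum_v\rho_v\le (E+1)/(1-1/r)$, which is the claimed bound \eqref{eq:distance-two-deficit}. The final sentence of the lemma is then arithmetic: for $r>4$ we have $(2r-1)/(r-1)<7/3$, so $E\le3$ gives $s<4\cdot 7/3<10$, hence $s\le9$.

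The main obstacle is the overlap bookkeeping along $Q$. The vertex of $Q$ at distance two from $w$ lies in $S$, and the vertex at distance three is a child counted in $\sum_v c_v$, so each must be counted exactly once. The nearest-maximum choice of $w$ forces $\rho<1$ on the $Q$-vertex in $S$, and this strictness may be needed to absorb the $+1$ slack. The other delicate point is the floor in $N$: I will use the strict inequality $r^2<N+1$ rather than equality, and if the constant comes out off by a factor $r/(r-1)$, I will instead apply the cruder bound $\sum_v\rho_v\le s$ to the children term. I will check the small cases $D=0,1,2$ separately, where $Q$ lies inside the radius-two ball.
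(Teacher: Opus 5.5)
Your route differs from the paper's. Once two bookkeeping slips are corrected, it proves the lemma, in fact a sharper bound.

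\textbf{The repairs.}

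First, for $D\ge3$ exactly $D-3$ vertices of $Q$ lie beyond distance three from $w$. Your count $h\ge 1+d+s+\sum_{v\in S}c_v+(D-3)$ therefore gives $d+s+\sum_{v\in S}c_v\le N+E+2$, not $N+E+1$.

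Second, to eliminate $d$ you need the lower bound $d+\sum_{v\in S}\rho_v=r^2\ge N$. The bound $r^2<N+1$ points the wrong way, and subtracting one upper bound from another proves nothing. With $c_v\ge r\rho_v-1$ the correct chain is
\[
 N+(r-1)\sum_{v\in S}\rho_v\le d+r\sum_{v\in S}\rho_v
 \le d+s+\sum_{v\in S}c_v\le N+E+2,
\]
so $(r-1)\sum_{v\in S}\rho_v\le E+2$.

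For $s-\sum_{v\in S}\rho_v\le E+1$ you need $d+s\le N+E+1$. Merely dropping $\sum_{v\in S}c_v\ge0$ from the corrected count loses one unit, and at $E=0$ you would then miss the stated bound. Use instead $\sum_{v\in S}c_v\ge1$: the vertex of $Q$ at distance three from $w$ is a child of the vertex of $Q$ in $S$. Equivalently, use the ball-plus-$Q$ count from the proof of Lemma~\ref{lem:rooted-budget}. For $D\le2$ your count reads $d+s+\sum_{v\in S}c_v\le N+E+D-1\le N+E+1$, and both inequalities follow at once.

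Altogether $s\le E+1+(E+2)/(r-1)$. Since $E+2<r(E+1)$ for $r>2$, this implies \eqref{eq:distance-two-deficit}. The strictness from the nearest-maximum choice is not needed anywhere.

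\textbf{Comparison with the paper.} The paper splits $S$ into leaves and non-leaves.
\begin{itemize}
\item Each non-leaf is injected into a distance-three neighbour. At most one of these lies on $Q$, and the rest are among the vertices outside $K\cup Q$. The paper bounds their number by $E$ using the strict inequality $d+s\ge N+1$ when $D\ge2$.
\item Leaves are bounded separately: each costs at least $1-1/r$ in the deficit $d+s-r^2\le E+1$.
\end{itemize}
Spending essentially the same vertex budget twice and adding is what produces the factor $(2r-1)/(r-1)$.

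Your weighted inequality $c_v\ge r\rho_v-1$ treats all of $S$ at once. A distance-two vertex with a large ratio must pay for roughly $r\rho_v$ children, so $\sum_{v\in S}\rho_v=r^2-d$ is at most $(E+2)/(r-1)$. The paper's argument is a bare injection plus a deficit count; yours is a slightly finer weighted count. For $r>4$ and $E\le3$ it gives $s\le5$ instead of $s\le9$.
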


\begin{proof}
Write $d=\deg_H(w)$ and let $K$ be the radius-two ball about $w$.
Counting $K\cup Q$ and using \eqref{eq:squared-perron-row} gives
\[
 d+s\le N+E+1,\qquad 0\le d+s-r^2\le E+1.
\]
Let $q$ count the vertices outside both $K$ and $Q$. If $D=0$ or $D=1$,
then $q=N+E+D-d-s-1\le E$, since $d+s\ge r^2\ge N$.
If $D\ge2$, the strict inequality in the proof of
Lemma~\ref{lem:rooted-budget} gives $d+s\ge N+1$, and hence
$q=N+E+1-d-s\le E$.

Every nonleaf vertex at distance two has a neighbor at distance three.
Choose one such neighbor for each of these vertices. The chosen neighbors
are distinct because $H$ is a tree, and at most one belongs to $Q$.
All the others are counted by $q$, so there are at most $E+1$ nonleaf
vertices at distance two. For a leaf $v$ at distance two, the Perron
equation gives $p_v\le p_w/r$. Thus each such leaf contributes at least
$1-1/r$ to
\[
 d+s-r^2
 =\sum_{\operatorname{dist}(w,v)=2}\left(1-\frac{p_v}{p_w}\right)
 \le E+1.
\]
There are at most $(E+1)/(1-1/r)$ such leaves, proving
\eqref{eq:distance-two-deficit}. If $r>4$ and $E\le3$, this bound gives
$s<28/3$, so integrality yields $s\le9$.
\end{proof}

\subsection{A path budget for the spectral radius}

\begin{lemma}[Path budget]
\label{lem:path-budget}
Let $H$ be a tree, let $Q$ be any nonempty path in $H$ (a single vertex is
allowed), and put $m=|V(H)\setminus V(Q)|$.  Then
\begin{equation}
 \rho(H)^2<\Phi(m),\qquad
 \Phi(m)=2+\sqrt{m^2+4}.
 \label{eq:sharp-path-budget}
\end{equation}
More generally, if $H$ is a forest containing a specified nonempty path
$Q$ and $e$ is the number of its edges outside $Q$, then
$\|A_H\|^2<\Phi(e)$.  If no path is specified, a forest with $e$ edges
satisfies $\|A_H\|^2\le e<\Phi(e)$.
\end{lemma}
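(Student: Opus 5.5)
The plan is to compare $\rho(H)^2$ with the largest eigenvalue of the $2\times2$ matrix
\[
 \begin{pmatrix}4&m\\ m&0\end{pmatrix},
\]
whose eigenvalues are $2\pm\sqrt{m^2+4}$. In other words, I want $y=\rho^2$ to satisfy $y^2-4y< m^2$, which is exactly $\rho^2<\Phi(m)$. I will prove the general forest statement $\|A_H\|^2<\Phi(e)$ directly. It contains the tree case: in a tree, each vertex off $Q$ has exactly one edge toward $Q$ on its unique path to $Q$, so the edges outside $Q$ are exactly $m$ in number, and $e=m$.

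Write $A=A_H=A_Q+N$, where $N$ is the adjacency matrix of the spanning subforest formed by the $e$ edges not in $Q$. Let $p$ be a unit Perron vector, or a unit top singular vector in the forest case, which can be taken nonnegative. Decompose $p=u+v$, with $u$ supported on $V(Q)$ and $v$ supported off $Q$, and set $\alpha=\|u\|$, $\beta=\|v\|$, so that $\alpha^2+\beta^2=1$.

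The core step is a two-block estimate of $\|Ap\|^2=\rho^2$. The $Q$-block is controlled by $\|A_Q\|=2\cos(\pi/(|Q|+1))<2$. The off-$Q$ coupling is controlled by $\|N\|^2\le e$, from \eqref{eq:forest-edge-bound} applied to the forest $N$. The aim is the quadratic-form inequality
\[
 \rho^4=\|A^2p\|\,\|p\|\cdots\quad\text{or more simply}\quad
 \rho^2\le 4\alpha^2+2e\,\alpha\beta\cdot\tfrac{1}{\rho^2}\cdot\rho^2\ \text{(schematically)},
\]
ultimately in the clean form
\[
 \rho^2\le \begin{pmatrix}\alpha&\beta\end{pmatrix}\begin{pmatrix}4&e\\ e&0\end{pmatrix}\begin{pmatrix}\alpha\\ \beta\end{pmatrix},
\]
with strictness coming from $\|A_Q\|<2$. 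Maximizing the right side over unit $(\alpha,\beta)$ then gives the bound.

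Concretely, I would proceed as follows.
\begin{enumerate}
\item Use the eigen-equations $\rho u=A_Qu+N_{QF}v$ and $\rho v=N_{FQ}u+N_{FF}v$, where $N$ is split into its block $N_{FQ}$ between $Q$ and the rest and its block $N_{FF}$ off $Q$.
\item Take the inner product of $\rho^2p=A^2p$ with $p$, which gives $\rho^2=\|A_Qu+N_{QF}v\|^2+\|N_{FQ}u+N_{FF}v\|^2$.
\item Expand the squares and bound each piece. The term $\|A_Qu\|^2$ is strictly less than $4\alpha^2$. The remaining terms should be absorbed into $2e\alpha\beta$ using $\|N\|^2\le e$, the orientation of every non-$Q$ edge toward its endpoint off $Q$, and Cauchy--Schwarz.
\end{enumerate}
As a sanity check, the case $Q$ a single vertex should reduce to the star bound. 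For a trivial $Q$ one has $A_Q=0$, and the claim becomes $\rho^2<\Phi(e)$, which already follows from $\rho^2\le e$. The final sentence of the lemma, for a forest with no specified path, is exactly \eqref{eq:forest-edge-bound} together with $e<\Phi(e)$.

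The main obstacle is the cross term in step 3. A naive triangle-inequality bound $\rho\le 2+\sqrt e$ is far too weak, so the proof must exploit the interaction between the path part and the forest part more carefully. If the direct expansion does not close, I would switch to a Schur-complement formulation. Eliminating the off-$Q$ vertices at $z=\rho$ gives $\rho u=A_Qu+N_{QF}(\rho-N_{FF})^{-1}N_{FQ}u$. Then bound the Rayleigh quotient of the correction term by $e/\rho$, using that the forest edges incident to $Q$ number at most $e$, and that $(\rho-N_{FF})^{-1}$ has norm at most $1/(\rho-\|N_{FF}\|)$. The resulting inequality $\rho<2+e/\rho$ rearranges to $\rho^2-2\rho<e$. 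Should this give a slightly different constant, I would refine it by squaring the Schur relation. That squared relation corresponds precisely to the $2\times2$ matrix above, and the inequality $y^2-4y<e^2$ should follow.
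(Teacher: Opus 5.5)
\textbf{There is a genuine gap: neither route proves the bound as stated.}

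Your first route rests on the inequality $\rho^2\le4\alpha^2+2e\alpha\beta$ for the Perron split $p=u+v$, and that inequality is false. Take $H=K_{1,e}$ and let $Q$ be a single leaf. This is allowed by the lemma, and it is a case where the lemma itself is trivial. Then $e=m$, $\rho^2=e$ and $\alpha^2=1/(2e)$, while the right side equals $2/e+\sqrt{2e-1}$, which is less than $e$ for every $e\ge3$. Here the off-path term $\|N_{FF}v\|^2$ is of order $e$, and no Cauchy--Schwarz manipulation can absorb it into a cross term $2e\alpha\beta$ with $\alpha$ small. So step~3 cannot be completed.

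Your Schur fallback is the right setting, and it is the one the paper uses, but as you set it up it loses the essential factor. Bounding $u^{\mathsf T}A_Qu<2\|u\|^2$, and the correction by its norm, gives at best $\rho(\rho-2)<e$. That is, $\rho^2<2+e+2\sqrt{1+e}$, which is strictly weaker than $\Phi(e)$ for every $e\ge1$. At $e=3$ it gives $9$ rather than $2+\sqrt{13}<6$, and Section~\ref{sec:center} needs $\Phi(3)<6$. Squaring cannot repair this, since $\rho^2(\rho-2)^2<e^2$ is weaker than the required $\rho^2(\rho^2-4)<e^2$. Moreover, a long path with $m$ leaves attached at a vertex far from both ends has $\rho^2\to\Phi(m)$. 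So the constant must come from the infinite-path Green function, not from $\|A_Q\|<2$.

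Three ingredients are missing.
\begin{enumerate}
\item With $G=(\rho I-A_Q)^{-1}$, use the diagonal bound $G_{vv}<1/\sqrt{\rho^2-4}$. It follows by embedding closed walks at $v$ into the doubly infinite path and summing $\sum_j\binom{2j}{j}\rho^{-2j-1}$.
\item The eliminated correction $V$ is nonnegative and \emph{diagonal}, and $GVx=x$ for the positive restriction $x$. Hence the positive semidefinite matrix $G^{1/2}VG^{1/2}$ has eigenvalue one, so $1\le\operatorname{tr}(GV)<\operatorname{tr}V/\sqrt{\rho^2-4}$. Pairing the trace against diagonal Green entries, rather than using a norm bound, is what makes the estimate sharp.
\item The bound $\|(\rho I-N_{FF})^{-1}\|\le1/(\rho-\|N_{FF}\|)$ does not give $\operatorname{tr}V\le m/\rho$. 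You need, for each branch $F_j$ with $t_j$ vertices and attachment vertex $b_j$, the even-walk bound $\langle e_{b_j},\rho(\rho^2I-A_{F_j}^2)^{-1}e_{b_j}\rangle\le\rho/(\rho^2-t_j+1)\le t_j/\rho$. This is valid once $\rho^2>m$; the case $\rho^2\le m$ is trivial.
\end{enumerate}
Together these give $\rho\sqrt{\rho^2-4}<m$, which is exactly $y^2-4y<m^2$. The forest version then follows by applying this to the component containing $Q$, and \eqref{eq:forest-edge-bound} to the other components.
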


\begin{proof}
Write $\lambda=\rho(H)$.  If $m=0$, then $H=Q$ and $\lambda<2$.
For $m\ge1$, the conclusion is immediate if $\lambda\le2$ or
$\lambda^2\le m$.  Thus assume $\lambda>2$ and $\lambda^2>m$.
Each component $F_j$ of $H-V(Q)$ is attached to $Q$ by exactly one edge.
Let $t_j=|V(F_j)|$, and let $b_j$ be the endpoint of its attachment edge
in $F_j$.  By \eqref{eq:forest-edge-bound}, $\rho(F_j)^2\le t_j-1$.
Bipartiteness, or the expansion into closed walks at $b_j$, gives
\begin{align*}
 R_j(\lambda)
 &:=\left\langle e_{b_j},(\lambda I-A_{F_j})^{-1}e_{b_j}\right\rangle\\
 &=\left\langle e_{b_j},
       \lambda(\lambda^2I-A_{F_j}^2)^{-1}e_{b_j}\right\rangle
 \le\frac{\lambda}{\lambda^2-t_j+1}
 \le\frac{t_j}{\lambda}.
\end{align*}
The last inequality is equivalent to
$(t_j-1)(\lambda^2-t_j)\ge0$, which holds because $t_j\le m<\lambda^2$.

Eliminate all the components $F_j$ from the Perron equations.  If $x>0$
is the restriction of the Perron vector to $Q$, the result is
\[
 (\lambda I-A_Q)x=Vx,
\]
where $V$ is a nonnegative diagonal matrix and
$\operatorname{tr}V=\sum_j R_j(\lambda)\le m/\lambda$.
Set $G=(\lambda I-A_Q)^{-1}$.  The positive semidefinite matrix
$G^{1/2}VG^{1/2}$ has eigenvalue one, because $GVx=x$ and these two
matrices are similar.  Therefore
\[
 1\le\operatorname{tr}(GV).
\]
For every vertex $v$ of a finite path,
\begin{equation}
 G_{vv}<\frac1{\sqrt{\lambda^2-4}}.
 \label{eq:path-green-diagonal}
\end{equation}
Indeed, every closed walk at $v$ embeds into the doubly infinite path,
and some sufficiently long even walks are missing.  The nonnegative
resolvent series and the central-binomial generating function give
\[
 G_{vv}<\frac1\lambda\sum_{j\ge0}
       \binom{2j}{j}\lambda^{-2j}
       =\frac1{\sqrt{\lambda^2-4}}.
\]
This argument also covers a one-vertex path.  Since $V$ has a positive
diagonal entry, we obtain
\[
 1\le\operatorname{tr}(GV)
 <\frac{m}{\lambda\sqrt{\lambda^2-4}}.
\]
Squaring and solving this inequality for $\lambda^2>4$ proves
\eqref{eq:sharp-path-budget}.

For the forest assertion, the component containing $Q$ has as many
vertices outside $Q$ as it has edges outside $Q$.  Apply the proved bound
to that component.  Every other component has squared spectral radius
at most its number of edges, by \eqref{eq:forest-edge-bound}.  Each of
these edge counts is at most $e$, and $\Phi$ is increasing.  Taking the
largest component norm proves the assertion.  The last statement is
\eqref{eq:forest-edge-bound} itself.
\end{proof}

\subsection{One-vertex Schur estimates}

\begin{lemma}[Rooted Schur estimates]
\label{lem:rooted-schur}
Let $H$ be a nontrivial tree with spectral radius $r$, let $p$ be its unit
positive Perron vector, and let $w$ be any vertex.  Put
\[
 d=\deg_H(w),\qquad s=|\{v:\operatorname{dist}(w,v)=2\}|,
 \qquad B=A_{H-w}.
\]
If $r^2>M\ge\|B\|^2$, then
\begin{equation}
 0\le r^2-d\le\frac{s}{r^2-M},\qquad
 p_w^2\ge\left(2+\frac{2s}{(r^2-M)^2}\right)^{-1}.
 \label{eq:rooted-schur-bounds}
\end{equation}
\end{lemma}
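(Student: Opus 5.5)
We eliminate $H-w$ from the Perron equations and reduce everything to a one-vertex Schur complement at $w$. Let $b\in\{0,1\}^{V(H)\setminus\{w\}}$ be the indicator of the neighbors of $w$, so $b^{\mathsf T}b=d$. Since $r>\|B\|$, the resolvent $(rI-B)^{-1}$ exists. The Perron equations then give
\[
 r=b^{\mathsf T}(rI-B)^{-1}b,\qquad p|_{H-w}=p_w(rI-B)^{-1}b .
\]

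The first task is to move to the squared form, which makes the distance-two count appear. Because $H-w$ is a forest, it is bipartite, and the same manipulation used in the proof of Lemma~\ref{lem:path-budget} applies. The neighbors of $w$ lie in distinct components of $H-w$. The odd part of the resolvent vanishes between a vertex and itself and between vertices in distinct components. Hence
\[
 b^{\mathsf T}(rI-B)^{-1}b=r\,b^{\mathsf T}(r^2I-B^2)^{-1}b .
\]
Dividing by $r$ gives $1=b^{\mathsf T}(r^2I-B^2)^{-1}b$.

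We expand this with the resolvent identity
\[
 (r^2I-B^2)^{-1}=r^{-2}I+r^{-2}B^2(r^2I-B^2)^{-1}.
\]
Multiplying by $r^2$ yields
\[
 r^2=d+(Bb)^{\mathsf T}(r^2I-B^2)^{-1}(Bb).
\]
The vector $Bb$ is the indicator of the vertices at distance two from $w$, because each such vertex has a unique neighbor adjacent to $w$. Therefore $\|Bb\|^2=s$. The matrix $(r^2I-B^2)^{-1}$ is positive semidefinite with norm at most $1/(r^2-\|B\|^2)\le 1/(r^2-M)$. This gives $0\le r^2-d\le s/(r^2-M)$.

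For the mass bound we use the normalization
\[
 1=p_w^2\bigl(1+\|(rI-B)^{-1}b\|^2\bigr).
\]
Here $\|(rI-B)^{-1}b\|^2=b^{\mathsf T}(rI-B)^{-2}b$. This is exactly the quantity that is hard to bound directly: with no squaring, the odd part contributes, and a naive estimate of order $d/(r-\|B\|)^2$ is too weak.

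We plan to handle it by splitting $(rI-B)^{-1}=(rI+B)(r^2I-B^2)^{-1}$. The even and odd parts live on opposite bipartition classes of each component, so the two pieces are orthogonal. This gives
\[
 \|(rI-B)^{-1}b\|^2=r^2\|(r^2-B^2)^{-1}b\|^2+\|(r^2-B^2)^{-1}Bb\|^2 .
\]

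For the first term, write $(r^2-B^2)^{-1}b=r^{-2}b+r^{-2}B^2(r^2-B^2)^{-1}b$. The two pieces need the following bounds.
\begin{itemize}
\item Orthogonality to the $b$-direction is not available, so we expand exactly instead. Using the identity $1=b^{\mathsf T}(r^2-B^2)^{-1}b$ proved above, the cross term equals $2r^{-2}(1-d/r^2)$.
\item The remainder has norm at most $\sqrt s\,\|B\|/\bigl(r^2(r^2-M)\bigr)$.
\end{itemize}
The second term is at most $s/(r^2-M)^2$.

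The cleaner alternative, which we would try first, avoids this expansion by using \eqref{eq:bipartite-coordinate-bound}-type bipartite mass splitting. The class of $w$ and the opposite class each carry squared mass $1/2$. The opposite class contains the neighbors of $w$ and equals $\tfrac12$. The class of $w$ consists of $w$ together with the even-part coordinates, namely $p_w^2$ plus the odd-in-$B$ vector $p_w(r^2-B^2)^{-1}Bb$. Thus
\[
 \tfrac12=p_w^2\bigl(1+\|(r^2I-B^2)^{-1}Bb\|^2\bigr)\le p_w^2\Bigl(1+\frac{s}{(r^2-M)^2}\Bigr).
\]
Rearranging gives $p_w^2\ge\bigl(2+2s/(r^2-M)^2\bigr)^{-1}$, which is exactly the claimed constant. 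This strongly suggests this is the intended route.

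The main point requiring care is checking the parity bookkeeping. We must confirm that the coordinates of $p$ in $w$'s bipartition class, other than $w$ itself, are precisely $p_w\,[(r^2I-B^2)^{-1}Bb]$. This should follow from applying $(rI-B)^{-1}=(rI+B)(r^2I-B^2)^{-1}$ and keeping only the even-distance part, namely the $B\,(r^2-B^2)^{-1}b$ term. That term is $(r^2-B^2)^{-1}Bb$, since $B$ commutes with $B^2$.
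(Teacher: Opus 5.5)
Your proof is correct, and for the mass bound it takes a genuinely different route from the paper's.

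\textbf{First inequality.} This is the paper's argument in operator form. The paper expands $r^2=d+\sum_{j\ge1}c_{2j}r^{-2j}$, with $c_{2j}=b^{\mathsf T}B^{2j}b\le sM^{j-1}$, and sums a geometric series. You keep the sum as the quadratic form $(Bb)^{\mathsf T}(r^2I-B^2)^{-1}(Bb)$ and bound it by an operator norm. These are the same estimate.

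\textbf{Mass bound.} The paper uses the Schur identity $p_w^{-2}=F'(r)=1+b^{\mathsf T}(rI-B)^{-2}b$, where $F(z)=z-b^{\mathsf T}(zI-B)^{-1}b$. It then eliminates $d$ with the Perron equation $F(r)=0$, reaching the exact identity $p_w^{-2}=2+\sum_{j\ge1}2jc_{2j}r^{-2j-2}$.

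You instead read off $p_w^{-2}=2+2\|(r^2I-B^2)^{-1}Bb\|^2$ directly from the fact that the bipartition class of $w$ carries squared mass $1/2$. Expanding the resolvent gives $\|(r^2I-B^2)^{-1}Bb\|^2=\sum_{j\ge1}jc_{2j}r^{-2j-2}$. So the two exact identities coincide, and the paper's elimination of $d$ plays exactly the role of your half-mass identity.

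What each route buys:
\begin{itemize}
\item Your version explains where the leading constant $2$ comes from, and needs neither differentiation nor a power series.
\item The paper's version keeps the lemma inside the Schur-derivative framework ($p_w^{-2}=F'(r)$, as with $M=S'(R)$) that it uses throughout.
\end{itemize}

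The parity bookkeeping you flagged is sound. $B^2$, and hence $(r^2I-B^2)^{-1}$, preserves each bipartition class of $H$ restricted to $H-w$. Therefore $r(r^2I-B^2)^{-1}b$ lives on the class of the neighbors of $w$, and $(r^2I-B^2)^{-1}Bb$ lives on the class of $w$.

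Your first, expansion-based route is unnecessary, but it also closes. You have $p_w^{-2}=2+(1-d/r^2)+r^{-2}\|B(r^2I-B^2)^{-1}Bb\|^2+\|(r^2I-B^2)^{-1}Bb\|^2$. Bound $1-d/r^2\le s/\bigl(r^2(r^2-M)\bigr)$ by the first inequality, and bound the other two terms as you indicate. The three bounds then sum to exactly $2s/(r^2-M)^2$.
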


\begin{proof}
Let $b$ be the indicator vector of the neighbors of $w$ in $H-w$.
Since $H$ has no cycles, $Bb$ is the indicator vector of the vertices
at distance two from $w$.  In particular, $b^{\mathsf T}b=d$ and
$\|Bb\|^2=s$.  Write $c_j=b^{\mathsf T}B^j b$.  The neighbors of $w$
belong to the same bipartition class, so $c_{2j+1}=0$; for $j\ge1$,
\[
 0\le c_{2j}=\|B^j b\|^2\le s M^{j-1}.
\]
Strict Perron monotonicity gives $\|B\|<r$.  The scalar Schur function
\[
 F(z)=z-b^{\mathsf T}(zI-B)^{-1}b
\]
satisfies $F(r)=0$.  Expanding this equation gives
\[
 r^2=d+\sum_{j\ge1}\frac{c_{2j}}{r^{2j}},
\]
and the geometric-series bound proves the first inequality in
\eqref{eq:rooted-schur-bounds}.

The eigenvector on $H-w$, normalized by $p_w$, is $(rI-B)^{-1}b$.
Consequently $p_w^{-2}=F'(r)$.  Eliminating $d$ from the derivative by
the preceding equation yields the exact identity
\[
 F'(r)=2+\sum_{j\ge1}\frac{2j\,c_{2j}}{r^{2j+2}}
 \le2+\frac{2s}{r^4}
       \sum_{j\ge1}j\left(\frac{M}{r^2}\right)^{j-1}
 =2+\frac{2s}{(r^2-M)^2}.
\]
Taking reciprocals proves the second inequality.
\end{proof}

\section{Isolation above spectral radius two}
\label{sec:isolation}

We give the needed spectral isolation argument directly.  In particular,
no classification theorem or list of generated trees is used below.
For positive integers $a,b,c$, denote by $Y(a,b,c)$ the tree with one
vertex of degree three and three pendant paths of $a,b,c$ edges.

\begin{lemma}[Spectral isolation]
\label{lem:isolation}
Every tree $H$ with $\rho(H)>2$ satisfies
\begin{equation}
 \rho(H)>2+\frac1{162}.
 \label{eq:radius-isolation}
\end{equation}
Moreover, if $T$ minimizes the adjacency spectral gap among trees of
its order $n\ge35$, then
\begin{equation}
 \lambda_2(T)>2,\qquad \rho(T)>2+\frac1{162}.
 \label{eq:minimizer-isolation}
\end{equation}
\end{lemma}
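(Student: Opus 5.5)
The plan has two parts: first prove \eqref{eq:radius-isolation} by reducing to \emph{minimal} trees with radius above $2$, then deduce \eqref{eq:minimizer-isolation} from Corollary~\ref{cor:small-gap}. The step I expect to be hardest is the uniform estimate for the infinite families in the first part.

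\textbf{Reduction to minimal trees.} Let $H$ be a tree with $\rho(H)>2$. Repeatedly delete a leaf as long as the radius stays above $2$; Perron monotonicity makes this harmless. The result is a subtree $H_0$ with $2<\rho(H_0)\le\rho(H)$, such that deleting any leaf of $H_0$ gives radius at most $2$. It suffices to prove the bound for such minimal trees.

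\textbf{Describing minimal trees by hand.} To stay self-contained, I would not cite the Smith classification. Instead I would show directly that trees of radius at most $2$ contain none of the following:
\begin{itemize}
\item a vertex of degree $\ge5$, since the star $K_{1,5}$ has radius $\sqrt5$;
\item a vertex of degree $4$ with a further edge;
\item two branch vertices together with one extra pendant edge;
\item certain $Y(a,b,c)$.
\end{itemize}
For each case I would exhibit a positive test vector $y$ with $Ay\ge 2y$ and a strict inequality somewhere. Together these facts force $H_0$ into a short explicit list:
\begin{itemize}
\item $K_{1,5}$;
\item $K_{1,4}$ with one edge subdivided;
\item the trees $Y(2,2,3)$, $Y(1,3,4)$ and $Y(1,2,6)$;
\item two infinite families: a path with two branch points carrying the short arms $(1,1)$ and $(1,2)$ at its ends, and a path carrying three branch points.
\end{itemize}

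\textbf{Radius bounds for the list.} For the finite cases, a test vector gives a Rayleigh quotient above $(2+1/162)^2$ for $A^2$. Equivalently, one can evaluate the characteristic polynomial at $2+1/162$ and check its sign.

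For the infinite families, which I expect to be the main obstacle, I would use the substitution $z=q+q^{-1}$ from the proof of Theorem~\ref{thm:comet-upper}. The plan is as follows.
\begin{itemize}
\item Eliminate the arms. A pendant path of $j$ edges has root response $p_{j-1}(z)/p_j(z)$, an explicit rational function of $q$.
\item Eliminate the connecting path with $t_L$ and $b_L$ from Section~\ref{sec:perron-probability}.
\item Write the Perron condition as the vanishing of the $2\times2$ Schur matrix.
\item Show that at $z=2+1/162$ the Schur matrix is already not positive definite, uniformly in the path length.
\end{itemize}
The point is that each "end", viewed from an infinite path, has a Schur response that exceeds the threshold of the semi-infinite path by a fixed amount. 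The coupling $b_L$ only decreases the relevant determinant, so the worst case is $L\to\infty$. That limit reduces to a single explicit inequality in $q$, whose limiting radius is Hoffman's value $\sqrt{2+\sqrt5}$, well above $2+1/162$. The delicate part is making the "fixed amount" explicit and uniform over all members of each family. If the $2\times2$ Schur bookkeeping becomes messy, I would fall back to the one-end comparison: bound $\rho$ below by the radius of a single end joined to a half-infinite path.

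\textbf{The minimizer statement.} Let $T$ be a minimizer of order $n\ge35$. By Corollary~\ref{cor:small-gap}, $g(T)\le n^{-3}$.

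First suppose $\rho(T)\le2$. Then $T$ is a path, a $D$- or $E$-type tree, or an affine $\widetilde D$/$\widetilde E$ tree. For these the eigenvalues are explicit cosines, namely $2\cos(\pi j/m)$ for suitable $m=O(n)$, possibly together with $0$ or $\pm2$ in the affine cases. So the gap is at least $c/n^2>n^{-3}$, which is a contradiction. Hence $\rho(T)>2$, and the first part gives $\rho(T)>2+1/162$.

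Next suppose $\lambda_2(T)\le2$. Then $g(T)>1/162>n^{-3}$, again a contradiction. Hence $\lambda_2(T)>2$, which completes \eqref{eq:minimizer-isolation}.
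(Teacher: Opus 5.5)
\textbf{Verdict: genuine gap, easily repaired.} Your second part matches the paper: you rule out $\rho(T)\le 2$ by comparing the explicit gaps of $P_n$, $Y(1,1,n-3)$ and $C(2,n-4)$ with $g\le n^{-3}$, and then get $g>1/162$ if $\lambda_2\le 2$. Your overall plan for the first part also matches: pass to a subtree with radius above $2$ and test it. The problem is the step you yourself call the main obstacle. The uniform bound over your ``infinite families'' is never established, and the reasons you sketch would not establish it.

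\begin{itemize}
\item In the $2\times 2$ Schur matrix, $L$ enters in two places. The coupling $b_L(z)$ decreases to $0$, but $t_L(z)=q(1-q^{2L-2})/(1-q^{2L})$ increases from $t_1=0$ to $q$. So the diagonal entries $z-\tau_i(z)-t_L(z)$ are largest for short paths. ``The coupling only decreases the determinant'' therefore does not show that $L\to\infty$ is the worst case. That monotonicity is in fact true (Hoffman--Smith subdivision of an internal path), but it needs a proof or a citation.
\item The $(1,1)$ end does not exceed the semi-infinite threshold. One has $z-2/z-q=(1-q^2)/(q(1+q^2))>0$ for $z>2$, with equality at $z=2$; this is exactly why $C(2,L+1)$ has radius $2$.
\item The limit of the $(1,1)/(1,2)$ family is $\rho(Y(1,2,\infty))\approx 2.0198$, given by $q^6+q^4=1$, not $\sqrt{2+\sqrt5}$. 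Hoffman's value comes from a bi-infinite path with one pendant vertex.
\item Your fallback runs the wrong way. A half-infinite path is not contained in $H$, so its radius is an upper bound for the corresponding finite subtree, not a lower bound for $\rho(H)$.
\end{itemize}

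\textbf{The missing observation.} No uniform analysis is needed, because your families contain only finitely many minimal members.
\begin{itemize}
\item If the connecting path has $L\ge 5$ edges, then the $(1,2)$ fork, the path and one leaf of the $(1,1)$ fork form $Y(1,2,L+1)\supseteq Y(1,2,6)$. Its radius already exceeds $2$, so the tree is not minimal.
\item Suppose there are three consecutive branch vertices $u,v,w$. The arm from $v$ through $u$ has length at least two, so $v$ and $w$ together with it contain a $(1,1)/(1,2)$ double fork. Hence a tree with three branch vertices is never minimal.
\end{itemize}
The minimal list is therefore finite: $K_{1,5}$, $K_{1,4}$ with one edge subdivided, $Y(2,2,3)$, $Y(1,3,4)$, $Y(1,2,6)$, and the $(1,1)/(1,2)$ double fork with $1\le L\le 4$. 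For the last family, take the radius-two Perron vector of the double fork, with coordinates $1$ on the path and $1/2$ on the leaves, and extend it by $1/4$ at the new leaf. The Rayleigh quotient gives $\rho-2\ge 2/(16L+33)\ge 2/97$. This is the paper's route. For $Y(1,2,6)$ use the characteristic polynomial rather than a rough test vector, since $\rho(Y(1,2,6))\approx 2.00659$ while $2+1/162\approx 2.00617$.
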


\begin{proof}
We first prove \eqref{eq:radius-isolation}.  The following leaf-extension
estimate is useful.  Suppose $H_0$ has spectral radius two and positive
Perron vector $x$, and put $S=\sum_v x_v^2$.  Add a leaf at a vertex $a$
and extend $x$ to that leaf with coordinate $x_a/2$.  Its Rayleigh quotient
gives
\begin{equation}
 \rho(H_0+\text{a leaf at }a)-2
 \ge\frac{x_a^2}{2S+x_a^2/2}.
 \label{eq:leaf-extension-isolation}
\end{equation}
Indeed, the extended vector has squared norm $S+x_a^2/4$ and adjacency
quadratic form $2S+x_a^2$.

If $\Delta(H)\ge5$, the tree contains $K_{1,5}$.  Apply
\eqref{eq:leaf-extension-isolation} to $K_{1,4}$ with central coordinate
one and leaf coordinates $1/2$; here $S=2$.  Adding a leaf at the center
gives $\rho(H)\ge2+2/9$.  If $\Delta(H)=4$, a copy of $K_{1,4}$ must
be properly extended, because $\rho(H)>2$.  A shortest extension adds
a leaf at one of its leaves, since its center already has degree four.
The same estimate gives $\rho(H)\ge2+2/33$.

We may now suppose $\Delta(H)\le3$.  If there is just one degree-three
vertex, then $H=Y(a,b,c)$ with $a\le b\le c$.  The endpoint resolvent
of a $j$-vertex path at two is $j/(j+1)$, as follows from the path
determinant recurrence.  Eliminating the three arms therefore shows
\begin{equation}
 \rho(Y(a,b,c))>2
 \quad\Longleftrightarrow\quad
 \frac1{a+1}+\frac1{b+1}+\frac1{c+1}<1.
 \label{eq:spider-radius-test}
\end{equation}
This inequality forces $H$ to contain one of
$Y(1,2,6)$, $Y(1,3,4)$, and $Y(2,2,3)$.
Indeed, $a\ge2$ forces $c\ge3$ unless $a=b=c=2$;
$a=1,b=1$ is impossible; $a=1,b=2$ forces $c\ge6$; and
$a=1,b\ge3$ forces $c\ge4$ unless $b=c=3$.
The last two listed trees extend the radius-two trees $Y(1,3,3)$ and
$Y(2,2,2)$ by one leaf.  Normalize each junction coordinate to one.
The squared norms and coordinates at the extended endpoints are
respectively $(S,x_a)=(3,1/4)$ and $(8/3,1/3)$.
Equation~\eqref{eq:leaf-extension-isolation} gives increases at least
$2/193$ and $2/97$, respectively.

Suppose there are exactly two degree-three vertices, at distance $L$.
The four paths leaving their connecting path are nonempty.  If all four
have one edge, the radius is two: assign one to each connecting-path
vertex and $1/2$ to each of the four leaves.  Otherwise at least one
pendant path has two edges.  If $L\ge5$, the tree contains $Y(1,2,6)$:
use that two-edge arm, the other arm at the same junction, and a six-edge
path towards and, if necessary, beyond the second junction.
If $L\le4$, it contains the radius-two double fork just described, with
one of its leaves extended by an edge.  The old squared norm is
$S=L+2\le6$ and the extended coordinate is $1/2$; thus its radius is
at least $2+2/97$.

If there are at least three degree-three vertices, choose three of them,
$u,v,w$, occurring in this order on a path, with no further degree-three
vertices in its two open segments.  For justification, form the auxiliary
tree whose vertices are the degree-three vertices of $H$, joining two
when their connecting path contains no other degree-three vertex.
It is a tree on at least three vertices: it is obtained from the minimal
subtree joining these vertices by suppressing only the vertices not in
this specified set.  Choose a two-edge path in this auxiliary tree.
Put $L_1=\operatorname{dist}(u,v)$ and $L_2=\operatorname{dist}(v,w)$.
If either length is at least five, the tree contains $Y(1,2,6)$ centered
at $v$: take a six-edge arm towards the more distant outer junction, a
two-edge arm towards the other, and one edge in the third direction.
If both lengths are at most four, retain the $u$--$w$ path, two off-path
neighbors at each endpoint, and one off-path neighbor at $v$.
This is a radius-two double fork with an extra leaf at an internal path
vertex.  Its old squared norm is $S=L_1+L_2+2\le10$ and $x_v=1$, so
\eqref{eq:leaf-extension-isolation} gives a radius at least $2+2/41$.
Trees without a branching vertex are paths and have radius below two.

It remains to check $Y(1,2,6)$.  Let $p_j(x)$ be the characteristic
polynomial of the $j$-vertex path, with $p_0=1$, $p_1=x$, and
$p_j=xp_{j-1}-p_{j-2}$.  Above two, eliminating the three arms gives
the strictly increasing Schur function
\[
 F(x)=x-\frac1x-\frac{x}{x^2-1}-\frac{p_5(x)}{p_6(x)}.
\]
Its denominator $x(x^2-1)p_6(x)$ is positive, and the numerator is
\[
 P(x)=x^{10}-9x^8+27x^6-31x^4+12x^2-1.
\]
Writing $s=x^2-4$, direct expansion gives
\[
 P(x)=-1+36s+69s^2+43s^3+11s^4+s^5.
\]
At $x=2+1/162$, we have $s=649/26244<1/40$, whence
\[
 P(x)<-1+\frac{36}{40}+\frac{71}{40^2}
       =-\frac{89}{1600}<0.
\]
Here $43/40+11/40^2+1/40^3<2$ bounds the last three terms by
$2/40^2$.  Hence the Perron root of $Y(1,2,6)$ exceeds $2+1/162$.
Every other structural case gave an increase at least $2/193>1/162$.
Monotonicity under taking subtrees proves \eqref{eq:radius-isolation}.

We finish by proving \eqref{eq:minimizer-isolation}.  A brief classification
of trees with radius at most two suffices for the orders under consideration.
A degree at least five is impossible.  A degree-four vertex forces the
whole tree to be $K_{1,4}$ by strict Perron monotonicity.
If there are two degree-three vertices, their connecting path and two
off-path neighbors at each end induce a radius-two double fork, so the
whole tree must be that double fork.  With exactly one degree-three
vertex, the reverse non-strict inequality in
\eqref{eq:spider-radius-test} leaves only
\[
 Y(1,1,c)\ (c\ge1),\quad
 Y(1,2,c)\ (2\le c\le5),\quad
 Y(1,3,3),\quad Y(2,2,2).
\]
For verification, $a\ge2$ forces $(a,b,c)=(2,2,2)$, while $a=1$
leaves $b=1$, or $b=2,c\le5$, or $b=c=3$.
The exceptional trees in this list have at most nine vertices.
Consequently, at every order $n\ge10$, the only possibilities are
$P_n$, $Y(1,1,n-3)$, and $C(2,n-4)$.

Their gaps are, respectively,
\begin{align*}
 4\sin(3x)\sin x,\quad &x=\frac{\pi}{2(n+1)},\\
 4\sin(2y)\sin y,\quad &y=\frac{\pi}{2(n-1)},\\
 4\sin^2 z,\quad &z=\frac{\pi}{2(n-3)}.
\end{align*}
Here is one direct verification of the latter two formulas.
Combining the two equal leaf coordinates of $Y(1,1,n-3)$ gives a weighted
path with characteristic polynomial $p_{n-1}-p_{n-3}$, together with an
antisymmetric zero eigenvalue.  At $\lambda=2\cos\theta$, the polynomial
equals $2\cos((n-1)\theta)$, so its two largest eigenvalues are
$2\cos y$ and $2\cos(3y)$.  Combining both leaf pairs of $C(2,n-4)$
gives a weighted path with characteristic polynomial
$(\lambda^2-4)p_{n-4}(\lambda)$ and two additional zero eigenvalues.
Its largest two eigenvalues are $2$ and $2\cos(\pi/(n-3))$.
The displayed determinant identities follow by the same path recurrence
$p_j=\lambda p_{j-1}-p_{j-2}$, with the end weights $\sqrt2$.

Using $\sin t\ge2t/\pi$ on $[0,\pi/2]$, these three gaps are at least
\[
 \frac{12}{(n+1)^2},\qquad
 \frac8{(n-1)^2},\qquad
 \frac4{(n-3)^2},
\]
respectively.  Each is greater than $n^{-2}$.
In contrast, Corollary~\ref{cor:small-gap}, which uses only an explicit
balanced-comet competitor, gives $g(T)\le n^{-3}$ for a minimizing tree
of order $n\ge35$.  Thus such a minimizer has $\rho(T)>2$ and hence
$\rho(T)>2+1/162$ by the first part of the lemma.  Finally
\[
 \lambda_2(T)=\rho(T)-g(T)
 >2+\frac1{162}-n^{-3}>2,
\]
as required.
\end{proof}

\section{Spectral centers and a finite structural criterion}
\label{sec:center}

Throughout this section, $T$ is a tree with largest two adjacency eigenvalues
$R=\lambda_1(T)$ and $\beta=\lambda_2(T)>2$, and $g=R-\beta$.
We first give the center decomposition and the estimates on which the
structural argument rests. These statements do not require $\beta$ to be
simple or an associated eigenvector to be nonzero at every vertex.

\begin{lemma}[Spectral-center dichotomy]
\label{lem:center-dichotomy}
At least one of the following alternatives holds.
\begin{enumerate}
\item There is an edge $a_1a_2$ such that the two components $H_1,H_2$ of
$T-a_1a_2$, rooted at $a_1,a_2$, have spectral radii $r_1,r_2>\beta$.
\item There is a vertex $v$ such that every component of $T-v$ has spectral
radius at most $\beta$, and at least two components have spectral radius
exactly $\beta$.
\end{enumerate}
In the second alternative, select any two such components and root them at
their neighbors of $v$. We call these the edge-center and vertex-center
alternatives, respectively.
\end{lemma}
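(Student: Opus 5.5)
We will obtain the dichotomy from a standard orientation argument on the tree, in the spirit of centroid arguments, using only Perron monotonicity and interlacing. The basic input is the following fact. For any vertex $v$, the forest $T-v$ has $\lambda_1(T-v)\ge\lambda_2(T)=\beta$, by Cauchy interlacing for the principal submatrix $A_{T-v}$. Since the spectrum of a forest is the union of its component spectra, some component of $T-v$ has spectral radius at least $\beta$.

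We also need that at most one component of $T-v$ has radius strictly greater than $\beta$. Suppose two components $C_1,C_2$ both had $\rho(C_i)>\beta$. Take their Perron vectors $y_1,y_2$, extended by zero, which have disjoint supports. The space they span is two-dimensional, and on it the quadratic form of $A$ equals that of $A_{C_1}\oplus A_{C_2}$ because the supports are nonadjacent. Hence the Rayleigh quotient on this space is at least $\min\rho(C_i)>\beta$, and Courant--Fischer gives $\lambda_2(T)>\beta$, a contradiction.

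Now orient edges. If some vertex $v$ has no component of $T-v$ with radius $>\beta$, then all components have radius $\le\beta$. Since some component has radius $\ge\beta$, at least one has radius exactly $\beta$. We must show at least two do. Suppose only one component $C$ has radius $\beta$ and all others have radius $<\beta$. Then $\beta$ appears in the spectrum of $T-v$ with multiplicity equal to its multiplicity in $A_C$. The plan here is to use the Schur complement at $v$: because $\beta=\lambda_2(T)$ with $\beta$ an eigenvalue of $T-v$ and interlacing, the eigenvalue $\beta$ of $T$ must be accounted for. I expect this multiplicity bookkeeping to be the main obstacle. Concretely, I would argue as follows. If $\beta$ is a simple eigenvalue of $T-v$ coming only from $C$, then $\lambda_1(T-v)=\beta$ and $\lambda_2(T-v)<\beta$. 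Interlacing gives $\lambda_2(T)\le\lambda_1(T-v)=\beta$ and $\lambda_3(T)\le\lambda_2(T-v)<\beta$, which is consistent and not yet contradictory. So I would instead pass to the neighbor $c\in C$ of $v$ and examine $T-c$. Its component containing $v$ contains all the other branches at $v$ together with $v$ itself. If that component has radius $>\beta$, we are in the edge alternative with the edge $vc$: $H_1\ni v$ has radius $>\beta$, and $H_2=C$ would need radius $>\beta$ too, which fails. So instead, combine this with the walk below.

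Walk along the unique direction of radius $>\beta$. Start at any vertex $v_0$. If $T-v_0$ has a (necessarily unique) component of radius $>\beta$, move to the neighbor $v_1$ in that component. Consider the edge $v_0v_1$ and its two sides $H_0\ni v_0$ and $H_1\ni v_1$, where $\rho(H_1)>\beta$. If $\rho(H_0)>\beta$, alternative (1) holds. Otherwise, continue from $v_1$. The side containing $v_0$ is a component of $T-v_1$ enlarged by nothing, so it has radius $\le\beta$, and the large direction at $v_1$, if any, points away from $v_0$. This walk never backtracks, so it terminates at a vertex $v$ where no component exceeds $\beta$. At such a $v$, let $m$ be the number of components of radius exactly $\beta$. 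If $m\le1$, then $\lambda_2(T-v)<\beta$. If moreover $\beta$ is not an eigenvalue of $T$ with an eigenvector vanishing at $v$, the Schur function $F(z)=z-b^{\mathsf T}(zI-A_{T-v})^{-1}b$ has a pole at $\beta$ or is analytic there. Counting eigenvalues of $T$ above $\beta-\varepsilon$ via the sign changes of $F$ then yields at most one eigenvalue of $T$ that is $\ge\beta$ other than ones forced equal. That contradicts $\lambda_1>\lambda_2=\beta$ when $m=0$. For $m=1$, I would use that the pole at $\beta=\rho(C)$ has positive residue weight, because the Perron vector of $C$ is positive at the neighbor of $v$. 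Then $F$ decreases from $+\infty$ to $-\infty$ across the pole, and $\beta$ itself is not an eigenvalue of $T$; only one eigenvalue of $T$ lies in $(\beta,\infty)$, contradicting $\lambda_2(T)=\beta$. Hence $m\ge2$, which is alternative (2).
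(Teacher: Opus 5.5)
Your first two steps are sound and match the paper. Your Courant--Fischer argument is the paper's interlacing step: at most one branch at each vertex has radius $>\beta$. Your non-backtracking walk is its orientation argument, and it ends either at an edge center or at a vertex $v$ all of whose branches have radius $\le\beta$. Your first paragraph already gives $m\ge1$ at such a $v$, so the Schur-counting discussion of $m=0$ can be dropped, together with the abandoned third paragraph.

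The gap is in the case $m=1$. You make the counting conditional on ``$\beta$ is not an eigenvalue of $T$ with an eigenvector vanishing at $v$'' and never discharge that hypothesis. The positive residue of $F$ at $\beta$ only excludes $\beta$-eigenvectors with $x_v\neq0$. Such an $x$ would need $x_v\,b\in\operatorname{range}(\beta I-A_{T-v})=p_C^{\perp}$, whereas $\langle b,p_C\rangle=p_C(c_0)>0$, where $c_0$ is the neighbor of $v$ in $C$. Eigenvectors vanishing at $v$ are invisible to $F$. This is exactly the zero-coordinate case the paper's proof is careful to cover. Also, ``only one eigenvalue of $T$ lies in $(\beta,\infty)$'' is not itself a contradiction with $\lambda_2(T)=\beta$; the contradiction must come from $\beta\notin\operatorname{spec}(T)$.

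The fix uses the same positivity you already invoke. Let $x$ be a $\beta$-eigenvector of $T$ with $x_v=0$. Then $x|_{T-v}$ is a $\beta$-eigenvector of $A_{T-v}$, so it equals $c\,p_C$ (extended by zero), because $\lambda_2(T-v)<\beta$. The row of $v$ reads $0=\beta x_v=c\,p_C(c_0)$, so $c=0$ and $x=0$.

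Equivalently, in your Schur language, $\det(zI-A_T)=\det(zI-A_{T-v})F(z)$. The simple zero of the first factor at $\beta$ cancels the simple pole of $F$, giving $\det(\beta I-A_T)=-p_C(c_0)^2\prod_{k\ge2}(\beta-\mu_k)\neq0$, where $\beta=\mu_1>\mu_2\ge\cdots$ are the eigenvalues of $A_{T-v}$.

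With this inserted, your argument is complete. It is then the contrapositive, specialized to $m=1$, of the paper's direct argument. The paper takes a $\beta$-eigenvector $x$ and pairs it with the Perron vector of a radius-$\beta$ branch to get $x_v=0$. It then reads off from the row of $v$ that at least two branches carry nonzero Perron multiples.
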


\begin{proof}
For every vertex $v$, at most one component of $T-v$ has spectral radius
larger than $\beta$. Otherwise the principal submatrix on $T-v$ would have
two eigenvalues larger than $\lambda_2(T)$, contrary to interlacing.
If every vertex has exactly one such component, orient each vertex toward
the neighbor belonging to this component. Every vertex has one outgoing edge. A directed
cycle exists, and the underlying tree forces a two-cycle. Its edge gives
the first alternative.

Otherwise choose $v$ for which every component has radius at most $\beta$,
and let $x$ be a nonzero $\beta$-eigenvector of $T$. If every component $H$
had radius less than $\beta$, its eigenvector equation would give
\[
 x|_H=x_v(\beta I-A_H)^{-1}e_a,
\]
where $a$ is its attachment vertex. The inverse has strictly positive entries:
this follows from its convergent nonnegative walk expansion and the
connectedness of $H$. Thus $x_v=0$ would imply $x=0$, whereas $x_v\ne0$
would make all entries of $x$ have the same sign. Both are impossible, the
second by orthogonality to the positive Perron vector of $T$.

Hence some branch $H$ has radius $\beta$. Multiplication of its eigenvector
equation by a positive Perron vector of $H$ gives $x_v=0$. All branches of
smaller radius then carry zero entries. On every branch of radius $\beta$,
the restriction of $x$ is a multiple of its Perron vector. The equation at
$v$ says that the sum of these multiples, weighted by their positive
attachment coordinates, is zero. A nonzero $x$ therefore uses at least two
such branches. This proves the second alternative, including the cases of
zero coordinates and multiple $\beta$.
\end{proof}

Fix a decomposition supplied by Lemma~\ref{lem:center-dichotomy}. Write
$h_i=|H_i|$, let $p_i$ be its positive unit Perron vector, and set
$\alpha_i=p_i(a_i)$. In both cases $\beta\le r_i<R$.
Choose a vertex $w_i$ maximizing $p_i$, with the smallest distance from $a_i$,
and define
\begin{equation}
\label{eq:center-defect}
 D_i=\operatorname{dist}(a_i,w_i),\qquad
 N_i=\lfloor r_i^2\rfloor,\qquad
 E_i=h_i-N_i-D_i.
\end{equation}
The rooted-distance bound in Lemma~\ref{lem:rooted-budget} gives $E_i\ge0$.
Let $\chi=0$ in the edge-center case and $\chi=1$ in the vertex-center case,
and put
\begin{equation}
\label{eq:center-total-defect}
 z=n-\chi-h_1-h_2,\qquad
 \mathcal E=E_1+E_2+z.
\end{equation}
Here $z=0$ for an edge center, while $z$ counts the vertices in the unselected
branches for a vertex center. In particular,
\begin{equation}
\label{eq:center-distance-identity}
 D_1+D_2+\chi=n-N_1-N_2-\mathcal E.
\end{equation}

\begin{lemma}[Center compression bounds]
\label{lem:center-compression}
The two alternatives satisfy, respectively,
\begin{equation}
\label{eq:center-compression}
 g>\alpha_1\alpha_2,
 \qquad
 g\ge\frac{\alpha_1^2+\alpha_2^2}{R}
       \ge\frac{2\alpha_1\alpha_2}{R}.
\end{equation}
The vertex-center bounds also hold when some branches are unselected.
\end{lemma}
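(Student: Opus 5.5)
In both alternatives I will compare the Perron vector $x$ of $T$ on each selected branch with the branch's own Perron vector $p_i$. The tool is the spectral expansion of the branch resolvent at its root. Since $r_i<R$, this resolvent is defined at $R$, and it is bounded below by its Perron term.

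\emph{Edge-center case.} The Perron equations of $T$ restricted to $H_1$ read $(RI-A_{H_1})x|_{H_1}=x_{a_2}e_{a_1}$, and similarly for $H_2$. Hence
\[
 x_{a_1}=x_{a_2}\,e_{a_1}^{\mathsf T}(RI-A_{H_1})^{-1}e_{a_1},\qquad
 x_{a_2}=x_{a_1}\,e_{a_2}^{\mathsf T}(RI-A_{H_2})^{-1}e_{a_2}.
\]
I will expand each diagonal resolvent entry in an orthonormal eigenbasis of $A_{H_i}$. Every term is nonnegative because $R>\rho(H_i)$, and the Perron term equals $\alpha_i^2/(R-r_i)$.

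The other terms contribute strictly positively. Each $H_i$ has radius $r_i>\beta>2$, so it is nontrivial. Then $e_{a_i}$ is not a multiple of the positive vector $p_i$, so it has a nonzero component along some other eigenvector.

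This gives $x_{a_1}>x_{a_2}\alpha_1^2/(R-r_1)$ and $x_{a_2}>x_{a_1}\alpha_2^2/(R-r_2)$. Multiplying these and cancelling the positive factor $x_{a_1}x_{a_2}$ yields
\[
 (R-r_1)(R-r_2)>\alpha_1^2\alpha_2^2.
\]
Since $r_1,r_2>\beta$, each factor $R-r_i$ is smaller than $g=R-\beta$. Therefore $g^2>\alpha_1^2\alpha_2^2$, which is the first bound in \eqref{eq:center-compression}.

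\emph{Vertex-center case.} Now $r_1=r_2=\beta$, so $R-r_i=g$ for both selected branches. The same elimination, with $x_v$ in place of $x_{a_2}$, gives
\[
 x_{a_i}=x_v\,e_{a_i}^{\mathsf T}(RI-A_{H_i})^{-1}e_{a_i}\ge x_v\,\frac{\alpha_i^2}{g}\qquad(i=1,2).
\]
The Perron equation at $v$ is $Rx_v=\sum_{u\sim v}x_u$. All its terms are positive, so it gives $Rx_v\ge x_{a_1}+x_{a_2}$. This holds whether or not the remaining branches at $v$ are selected, which is why the bound survives unselected branches.

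Combining the two displays gives $Rx_v\ge x_v(\alpha_1^2+\alpha_2^2)/g$. Dividing by $x_v>0$ gives $g\ge(\alpha_1^2+\alpha_2^2)/R$. The final inequality in \eqref{eq:center-compression} is the arithmetic--geometric mean inequality $\alpha_1^2+\alpha_2^2\ge2\alpha_1\alpha_2$.

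The only step needing care is the strictness in the edge case. It requires $H_i$ to be nontrivial, which follows from $r_i>\beta>2$. A second point to check is that the restriction of $x$ to $H_i$ is exactly the resolvent applied to the boundary term: this holds because each $H_i$ is attached to the rest of $T$ through the single vertex $a_i$. No other obstacle is expected.
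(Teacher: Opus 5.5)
Your proof is correct, but it takes a different route from the paper's.

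\textbf{The paper's route.} The paper argues variationally. In the edge case it compresses $A_T$ to $\operatorname{span}\{p_1,p_2\}$, which gives the matrix $\begin{pmatrix} r_1&\alpha_1\alpha_2\\ \alpha_1\alpha_2&r_2\end{pmatrix}$. In the vertex case it compresses to $\operatorname{span}\{p_1,p_2,e_v\}$. It then applies Rayleigh--Ritz, $R\ge\mu$, where $\mu$ is the top eigenvalue of the compression; in the vertex case $\mu(\mu-\beta)=\alpha_1^2+\alpha_2^2$.

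\textbf{Your route.} You eliminate each selected branch from the Perron equations of $T$ and keep only the Perron pole of the branch root resolvent. The two arguments are dual views of one computation. In the vertex case, your truncated Schur function $u-(\alpha_1^2+\alpha_2^2)/(u-\beta)$ is exactly the Schur complement onto $e_v$ of the paper's $3\times3$ compression. So both give $R(R-\beta)\ge\alpha_1^2+\alpha_2^2$. In the edge case both amount to $(R-r_1)(R-r_2)\ge\alpha_1^2\alpha_2^2$.

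\textbf{What each buys.} The Rayleigh--Ritz version needs only the variational principle and explicit test vectors. It uses neither positivity of $x$ nor invertibility of $RI-A_{H_i}$. Your version does need both facts (the latter is strict Perron monotonicity). In exchange, it shows exactly what is discarded: the non-Perron terms, which are the regular resolvents $\mathsf h_i$ of Lemma~\ref{lem:center-feshbach}. It is therefore the natural starting point for that sharpened two-channel estimate.

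\textbf{A minor remark.} The strictness you justify in the edge case is not needed. Since $0<R-r_i<g$, you already have $g^2>(R-r_1)(R-r_2)$, so the non-strict truncation bound suffices.
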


\begin{proof}
In the edge case, compression to the two Perron vectors gives
\[
 \begin{pmatrix}r_1&\alpha_1\alpha_2\\
                 \alpha_1\alpha_2&r_2\end{pmatrix}.
\]
Its largest eigenvalue is greater than $\beta+\alpha_1\alpha_2$.
Rayleigh--Ritz proves the first inequality. In the vertex case, compression
to the same two vectors and $e_v$ gives
\[
 \begin{pmatrix}
 \beta&0&\alpha_1\\0&\beta&\alpha_2\\
 \alpha_1&\alpha_2&0
 \end{pmatrix}.
\]
Its largest eigenvalue $\mu$ satisfies
$\mu(\mu-\beta)=\alpha_1^2+\alpha_2^2$.
Since $R\ge\mu>\beta>0$, we have
$R(R-\beta)\ge\alpha_1^2+\alpha_2^2$. The last inequality is
arithmetic--geometric mean.
\end{proof}

\begin{lemma}[Universal Perron amplitude]
\label{lem:center-amplitude}
Let $H$ be a tree of order $h$ with spectral radius $r>1$, and let $p$ be
its positive unit Perron vector. Then
\begin{equation}
\label{eq:center-amplitude}
 \max_v p_v^2\ge\frac{r(r-1)}{2(h-r-1)}.
\end{equation}
\end{lemma}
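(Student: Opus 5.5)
The plan is to start from the edge form of the Perron identity. Since $p^{\mathsf T}A_Hp=r$, we have
\[
 r=2\sum_{uv\in E(H)}p_up_v .
\]
Write $m=\max_v p_v^2$. The aim is to bound the right-hand side by $m$ times a count of edges that is smaller than the crude count $h-1$. That crude count only gives $m\ge r/(2(h-1))$. To improve it, I would split $E(H)$ into pendant edges and internal edges, according to whether one endpoint is a leaf.

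\emph{Pendant edges.} For a leaf $v$ with neighbor $u$, the Perron equation gives $p_v=p_u/r$. Each pendant edge therefore contributes $2p_u^2/r\le 2m/r$.

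\emph{Internal edges.} Each internal edge contributes at most $2\sqrt{m}\,\sqrt m=2m$.

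If $\ell$ denotes the number of leaves, this yields
\[
 r\le 2m\Bigl(h-1-\ell+\frac{\ell}{r}\Bigr)=2m\Bigl(h-1-\ell\,\frac{r-1}{r}\Bigr).
\]
The target inequality is equivalent to $r(r-1)\le 2m(h-r-1)$. So the remaining task is a lower bound of the form $\ell(r-1)/r\gtrsim$ (about $r$ units of edges), after rearranging. Combined with the display, this should give exactly the denominator $h-r-1$.

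\emph{Many leaves.} The natural source of such a bound is a vertex $w$ of maximum coordinate. The squared Perron row \eqref{eq:squared-perron-row} gives $r^2\le \deg(w)+s$, where $s$ counts vertices at distance two from $w$. Also $r\,p_w\le \deg(w)\,\max_{u\sim w}p_u$. The argument must make the radius-two ball around $w$ supply roughly $r$ ``cheap'' edges. If $\deg(w)\ge r$, I would try to show that a definite proportion of the neighbors of $w$ are leaves. Otherwise the ball has many vertices at distance two, and their edges to the neighbors of $w$ carry products $p_up_v$ much smaller than $m$. This is because, by the Perron equation at those neighbors, coordinates decrease by a factor comparable to $r$ away from $w$.

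I expect the main obstacle to be this accounting. The estimate $p_up_v\le m$ on internal edges is too weak to close the bound on its own. I would replace it for the edges incident to or near $w$ by $p_up_v\le p_w\,p_u$ and use $\sum_{u\sim w}p_u=r\,p_w$. This gives a contribution $2r\,p_w^2$ from the star at $w$ and removes $\deg(w)$ edges from the count. If this does not directly give $h-r-1$, I would fall back on Cauchy--Schwarz applied to $r^2p_v^2=(\sum_{u\sim v}p_u)^2$ together with the bipartite half-mass fact \eqref{eq:bipartite-coordinate-bound}. I would then optimize the resulting quadratic inequality in $\sqrt m$. The factor $r(r-1)$ in the statement suggests exactly such a combination of a factor $r$ from the star at $w$ with the factor $(r-1)/r$ gained on pendant edges.
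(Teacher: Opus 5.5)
Your proposal has a genuine gap: the only estimate you actually complete,
\[
 r\le 2m\Bigl(h-1-\ell\,\frac{r-1}{r}\Bigr),
\]
cannot give the claim, and no later step supplies what is missing. The shortfall is not a matter of constants. The loss comes from charging $2m$ to every internal edge, whereas along a long internal path the products $p_up_v$ decay geometrically away from the maximum.

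\emph{Double comets.} For the balanced double comet $C(k,k)$ one has $h=3k$, $\ell=2k$ and $r\approx\sqrt k$. Your inequality then gives only $m\gtrsim 1/(2\sqrt k)$, while the statement asserts $m\gtrsim 1/6$. Your suggested refinement, $p_up_v\le p_wp_u$ on the star at $w$, changes only the edges at $w$. The $k-2$ path edges away from $w$ still cost $2m$ each, so the bound stays of order $k^{-1/2}$.

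\emph{The ``many leaves'' step.} This fails as formulated. Since $rp_w=\sum_{u\sim w}p_u\le\deg(w)p_w$, the case $\deg(w)\ge r$ always occurs. Yet in the spider with $k\ge2$ legs of length two, the maximum sits at the centre and none of its neighbours is a leaf. The Cauchy--Schwarz fallback is not specified. So the proposal contains no argument that reaches the denominator $h-r-1$.

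\emph{The missing idea.} Reduce the quadratic edge sum to the \emph{linear} sum $\sum_vp_v$, and control that sum globally.
\begin{enumerate}
\item Orient $H$ away from a maximiser $w$ and use $p_ap_b\le p_w\,p_{\mathrm{child}}$ on \emph{every} edge, not just near $w$. Each $v\ne w$ is a child exactly once, so your identity becomes $r\le 2p_w\bigl(\sum_vp_v-p_w\bigr)$.
\item Add all the Perron equations: $(r-1)\sum_vp_v=\sum_v(\deg v-1)p_v\le(h-2)p_w$. The leaf saving you were after appears here automatically, since leaves carry weight $\deg v-1=0$.
\end{enumerate}
Combining the two steps gives
\[
 r\le 2p_w^2\Bigl(\frac{h-2}{r-1}-1\Bigr)=2p_w^2\,\frac{h-r-1}{r-1},
\]
which is exactly the claim. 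This is the paper's proof, written there with $u_w=1$, $S_1=\sum_vu_v$ and $S_2=\sum_vu_v^2$. No analysis of the radius-two ball or of distance-two vertices is needed.
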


\begin{proof}
Normalize the Perron vector as $u_w=\max_v u_v=1$, and put
$S_1=\sum_v u_v$, $S_2=\sum_v u_v^2$. Summing its eigenvector equations gives
\[
 (r-1)S_1=\sum_v(\deg(v)-1)u_v\le h-2.
\]
Orient the tree away from $w$. Every parent coordinate is at most one, so
\[
 rS_2=2\sum_{ab\in E(H)}u_au_b
 \le2\sum_{v\ne w}u_v=2(S_1-1).
\]
Combining the inequalities yields
$S_2\le2(h-r-1)/(r(r-1))$. Since $\max_v p_v^2=1/S_2$, this proves the
claim. The denominator is positive; for example,
$r^2\le h-1$ and $r>1$ imply $h-r-1>0$. The elementary tree bound used here
follows from bipartite spectral symmetry and
$\sum_{\lambda_j(H)>0}\lambda_j(H)^2=h-1$.
\end{proof}

\begin{lemma}[Transfer along a rooted path]
\label{lem:root-transfer}
Let $H$ have spectral radius $r>2$ and positive Perron vector $p$.
For vertices $a,w$ at distance $D$, and any $R\ge r$, put
$q=(R-\sqrt{R^2-4})/2$. Then
\begin{equation}
\label{eq:center-transfer}
 p_a\ge\frac{p_w}{p_D(R)}
       \ge p_w(1-q^2)q^D,
\end{equation}
where $p_j(z)=\det(zI-A_{P_j})$ are the path polynomials introduced
in Section~\ref{sec:perron-probability}, with $p_0(z)=1$.
\end{lemma}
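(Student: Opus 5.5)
The first inequality comes from a ratio comparison along the $a$--$w$ path, using only the Perron equations at the path vertices. The second comes from the closed form of the path polynomial at $R=q+q^{-1}$. If $D=0$, both inequalities are immediate, since $p_0=1$ and $1-q^2\le1$. So assume $D\ge1$, and write the path as $a=v_0,v_1,\dots,v_D=w$.

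For the first step, I use the Perron equation $rp_{v_j}=\sum_{u\sim v_j}p_u$ at each $v_j$ with $j<D$. All coordinates are positive, so discarding the neighbours off the path gives
\[
 p_{v_{j+1}}\le rp_{v_j}-p_{v_{j-1}}\le Rp_{v_j}-p_{v_{j-1}},
\]
with the convention $p_{v_{-1}}=0$, and using $R\ge r$. Since $R\ge r>2$, every path polynomial $p_j(R)$ is positive; for example $p_j(R)=\det(RI-A_{P_j})$ and $\rho(P_j)<2$.

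I will prove by induction on $j$ that
\[
 \frac{p_{v_{j+1}}}{p_{v_j}}\le\frac{p_{j+1}(R)}{p_j(R)}\qquad(0\le j<D).
\]
The base case $j=0$ reads $p_{v_1}\le Rp_{v_0}=p_1(R)p_{v_0}$, which is the displayed inequality with $p_{v_{-1}}=0$. For the inductive step, the hypothesis gives $p_{v_{j-1}}\ge p_{v_j}\,p_{j-1}(R)/p_j(R)$. Substituting this into the displayed inequality yields
\[
 p_{v_{j+1}}\le p_{v_j}\Bigl(R-\frac{p_{j-1}(R)}{p_j(R)}\Bigr)=p_{v_j}\frac{p_{j+1}(R)}{p_j(R)},
\]
by the recurrence $p_{j+1}=zp_j-p_{j-1}$. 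Multiplying the ratio bounds for $j=0,\dots,D-1$ telescopes to $p_w\le p_D(R)\,p_a$. This is the first inequality in \eqref{eq:center-transfer}.

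For the second inequality, write $R=q+q^{-1}$ with $0<q\le1$; here $q<1$ because $R>2$. The path recurrence, as in the proof of Theorem~\ref{thm:comet-upper}, gives
\[
 p_D(R)=\frac{q^{-(D+1)}-q^{D+1}}{q^{-1}-q}=q^{-D}\,\frac{1-q^{2D+2}}{1-q^2}\le\frac{q^{-D}}{1-q^2}.
\]
Inverting this bound gives $p_w/p_D(R)\ge p_w(1-q^2)q^D$.

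I expect no real obstacle here. The only points that need care are the positivity of $p_j(R)$, which makes the division legitimate, and the direction of the inequalities in the induction: an upper bound on the ratio $p_{v_{j-1}}/p_{v_j}$ is never needed, only the lower bound on $p_{v_{j-1}}$ supplied by the inductive hypothesis.
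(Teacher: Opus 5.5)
Your proof is correct and follows the paper's route. You compare the Perron coordinates along the $a$--$w$ path with the free path recurrence, using $R\ge r$ and $p_j(R)>0$, and then bound $p_D(R)$ through its closed form at $R=q+q^{-1}$. The only difference is in bookkeeping: the paper solves the inhomogeneous recurrence explicitly as $x_D=p_D(R)x_0-\sum_{j=0}^{D-1}p_{D-1-j}(R)b_j$ with all $b_j\ge0$, whereas you propagate the same inequality by induction on the ratios $p_{v_{j+1}}/p_{v_j}$.
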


\begin{proof}
The case $D=0$ is immediate, so assume $D\ge1$.
Write the coordinates on the $a$--$w$ path as $x_0,\ldots,x_D$.
The eigenvector equations give
$x_1=Rx_0-b_0$ and
$x_{j+1}=Rx_j-x_{j-1}-b_j$, where every $b_j\ge0$; these terms include
both the off-path neighbors and $(R-r)x_j$.
Solving the recurrence yields
\[
 x_D=p_D(R)x_0-\sum_{j=0}^{D-1}p_{D-1-j}(R)b_j
 \le p_D(R)x_0,
\]
since $p_j(R)>0$ for $R>2$. Finally, the path-polynomial formula gives
\[
 p_D(R)=\frac{q^{-D}(1-q^{2D+2})}{1-q^2}
 \le\frac{q^{-D}}{1-q^2}.
\]
\end{proof}

\begin{lemma}[A center bound with an integer defect]
\label{lem:center-bound}
Put $q=(R-\sqrt{R^2-4})/2$, $y=q^{-2}$, and $\eta=2Rg$. Then
\begin{equation}
\label{eq:center-universal}
 g\ge
 \frac{\beta(\beta-1)}{n-2\beta-2}
 (1-1/y)^2y^{-(n-N_1-N_2-\mathcal E)/2}.
\end{equation}
Moreover,
\begin{equation}
\label{eq:center-integer-budget}
 0\le R^2-r_i^2\le\eta,
 \qquad N_1+N_2\ge2R^2-2\eta-2.
\end{equation}
\end{lemma}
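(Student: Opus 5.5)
The plan is to combine the center compression bounds of Lemma~\ref{lem:center-compression} with lower bounds on $\alpha_1,\alpha_2$. These come from the universal amplitude bound at the Perron maximum (Lemma~\ref{lem:center-amplitude}) and the transfer along the root-to-maximum path (Lemma~\ref{lem:root-transfer}). The integer budget \eqref{eq:center-integer-budget} is a separate interlacing/compression argument, which I handle first.

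For \eqref{eq:center-integer-budget}, we already have $r_i<R$. For the upper bound on $R^2-r_i^2$, I compare $r_i$ with $\beta$. In both alternatives $r_i\ge\beta=R-g$, so
\[
R^2-r_i^2\le R^2-\beta^2=g(R+\beta)\le 2Rg=\eta.
\]
Hence $r_i^2\ge R^2-\eta$, and
\[
N_i=\lfloor r_i^2\rfloor> r_i^2-1\ge R^2-\eta-1.
\]
Summing over $i=1,2$ gives $N_1+N_2\ge 2R^2-2\eta-2$.

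For \eqref{eq:center-universal}, I apply Lemma~\ref{lem:root-transfer} in $H_i$ with the common parameter $R\ge r_i>2$. This gives
\[
\alpha_i\ge p_i(w_i)\,(1-q^2)\,q^{D_i}=\max_v p_i(v)\,(1-1/y)\,y^{-D_i/2}.
\]
Lemma~\ref{lem:center-amplitude} applies to $H_i$ with $r_i\ge\beta>2$:
\[
\max_v p_i(v)^2\ge \frac{r_i(r_i-1)}{2(h_i-r_i-1)}\ge\frac{\beta(\beta-1)}{2(h_i-\beta-1)}.
\]
Multiplying the two amplitude bounds, I bound the product of the denominators $2(h_i-\beta-1)$. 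By AM--GM, $\sqrt{(h_1-\beta-1)(h_2-\beta-1)}\le (h_1+h_2-2\beta-2)/2\le (n-2\beta-2)/2$, using $h_1+h_2\le n$. This yields
\[
\alpha_1\alpha_2\ge \frac{\beta(\beta-1)}{n-2\beta-2}\,(1-1/y)^2\,y^{-(D_1+D_2)/2}.
\]

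In the edge case, $g>\alpha_1\alpha_2$, and \eqref{eq:center-distance-identity} with $\chi=0$ gives $D_1+D_2=n-N_1-N_2-\mathcal E$, which finishes this case. In the vertex case, $g\ge 2\alpha_1\alpha_2/R$, and now $D_1+D_2=n-N_1-N_2-\mathcal E-1$. The extra factor $y^{1/2}=1/q$ must absorb $2/R$, so I need $2/(Rq)\ge1$. This holds because $q<1$ and $q+1/q=R$ give $Rq=q^2+1<2$, i.e.\ $2y^{1/2}/R\ge1$. So the vertex case gives the same bound.

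The step I expect to need care is the positivity and monotonicity of the amplitude denominators. I need $h_i-\beta-1>0$ and the monotonicity in $r_i$ of $r(r-1)/(h-r-1)$, which increases in $r$ for fixed $h$, and this is what justifies replacing $r_i$ by $\beta$. I also need $n-2\beta-2>0$, which follows from $h_i-1\ge r_i^2>\beta$. Beyond that, the only potentially lossy point is the AM--GM combination, and it goes in the right direction.
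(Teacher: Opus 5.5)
Your proposal is correct and follows essentially the same route as the paper: Lemma~\ref{lem:center-amplitude} with AM--GM to bound $p_1(w_1)p_2(w_2)$ below by $\beta(\beta-1)/(n-2\beta-2)$, then Lemma~\ref{lem:root-transfer} and the compression bounds together with \eqref{eq:center-distance-identity}, where the vertex-center factor is absorbed via $2/(Rq)=2/(1+q^2)>1$, and finally $\beta\le r_i<R$ for the integer budget. Your explicit checks of the positivity of the denominators and of the monotonicity in $r_i$ spell out what the paper compresses into ``all denominators are positive.''
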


\begin{proof}
Lemma~\ref{lem:center-amplitude} and arithmetic--geometric mean give
\[
 p_1(w_1)p_2(w_2)
 \ge\frac{\sqrt{r_1(r_1-1)r_2(r_2-1)}}
 {2\sqrt{(h_1-r_1-1)(h_2-r_2-1)}}
 \ge\frac{\beta(\beta-1)}{n-2\beta-2}.
\]
All denominators are positive. Apply Lemmas~\ref{lem:root-transfer}
and \ref{lem:center-compression}, and use
\eqref{eq:center-distance-identity}. In the vertex case, the extra factor
after including its center vertex in the exponent is
$2/(Rq)=2/(1+q^2)>1$. This proves \eqref{eq:center-universal}.
Finally $\beta\le r_i<R$ implies
$R^2-r_i^2\le(R-\beta)(R+\beta)\le\eta$, and
$N_i>r_i^2-1\ge R^2-\eta-1$.
\end{proof}

\subsection{A finite two-channel estimate}

The first inequality in \eqref{eq:center-compression} loses a factor close
to two when the two components are weakly coupled. The following finite
estimate recovers the needed factor while controlling the other modes.
Only its trace version is needed for the main theorem.

\begin{lemma}[Two-channel resolvent estimate]
\label{lem:center-feshbach}
In the edge-center alternative, let
$\mu_i=\lambda_2(H_i)$ and define the regular root resolvents
\[
 \mathsf h_i(u)=\sum_{j\ge2}
 \frac{p_{i,j}(a_i)^2}{u-\lambda_j(H_i)},\qquad
 \mathsf c_i(u)=-\mathsf h_i'(u)=\sum_{j\ge2}
 \frac{p_{i,j}(a_i)^2}{(u-\lambda_j(H_i))^2}.
\]
Here $(p_{i,j})$ is an orthonormal eigenbasis and $p_{i,1}=p_i$.
Assume
\[
 \beta>\max\{\mu_1,\mu_2\},\qquad
 \mathsf h_1(\beta)\mathsf h_2(\beta)<1.
\]
Write $H_i^*=\mathsf h_i(\beta)$, $C_i^*=\mathsf c_i(\beta)$, and set
\begin{equation}
\label{eq:center-epsilon}
 \varepsilon=
 \frac{C_1^*(\alpha_1^2(H_2^*)^2+\alpha_2^2)
       +C_2^*(\alpha_1^2+\alpha_2^2(H_1^*)^2)}
 {(1-H_1^*H_2^*)^2}.
\end{equation}
Then
\begin{equation}
\label{eq:center-factor-two}
 g\ge\frac{4\alpha_1\alpha_2}{2+\varepsilon}.
\end{equation}
\end{lemma}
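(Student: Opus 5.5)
The plan is to work with the edge-center tree $T=H_1\cup H_2+a_1a_2$ and to eliminate everything except the two Perron directions $p_1,p_2$, via a Feshbach (Schur) reduction at spectral parameter $u$ near $\beta$ and $R$. Let $P$ be the orthogonal projection onto $\operatorname{span}\{p_1,p_2\}$ (extended by zero) and $P^\perp$ its complement. Then $A_T=A_{H_1}\oplus A_{H_2}+e_{a_1}e_{a_2}^{\mathsf T}+e_{a_2}e_{a_1}^{\mathsf T}$. The block $P^\perp A_TP^\perp$ equals $\bigoplus_i\sum_{j\ge2}\lambda_j(H_i)p_{i,j}p_{i,j}^{\mathsf T}$ plus the rank-two coupling compressed to $P^\perp$. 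The coupling entries are $p_{1,j}(a_1)p_{2,k}(a_2)$, so the resolvent of this complement block, restricted to the root vectors, is governed by $\mathsf h_1,\mathsf h_2$. Concretely, eliminating the regular part of $H_2$ modifies the root response at $a_1$ by $\mathsf h_2(u)$, and iterating gives the factor $(1-\mathsf h_1(u)\mathsf h_2(u))^{-1}$. The hypothesis $\beta>\mu_i$ and $H_1^*H_2^*<1$ ensures this complement resolvent is positive and well defined for all $u\ge\beta$, since $\mathsf h_i$ decreases in $u$.

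Next I would compute the $2\times2$ Feshbach matrix
\[
 K(u)=\begin{pmatrix} r_1+\alpha_1^2\dfrac{H_2(u)}{1-H_1H_2} & \dfrac{\alpha_1\alpha_2}{1-H_1H_2}\\[1ex]
 \dfrac{\alpha_1\alpha_2}{1-H_1H_2} & r_2+\alpha_2^2\dfrac{H_1(u)}{1-H_1H_2}\end{pmatrix},
\]
with $H_i=\mathsf h_i(u)$. The form comes from summing the geometric series of walks bouncing between the two regular parts through the edge $a_1a_2$. The eigenvalues of $T$ above $\max\mu_i$ (and not eigenvalues of the complement) are exactly the $u$ with $\det(uI-K(u))=0$, so both $R$ and $\beta$ arise this way. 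Since all diagonal corrections are nonnegative, each eigenvalue of $K(u)$ is at least the corresponding eigenvalue without them. In particular the gap between the two branches of eigenvalues of $K(u)$ is at least twice the off-diagonal entry, i.e.\ at least $2\alpha_1\alpha_2$ once the diagonal entries are treated symmetrically. More carefully, I would use $\kappa_+(u)-\kappa_-(u)\ge 2|K_{12}(u)|\ge2\alpha_1\alpha_2$.

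The main obstacle is that $R$ and $\beta$ solve $u=\kappa_\pm(u)$ at \emph{different} values of $u$, and $K$ depends on $u$. To handle this, integrate the derivative:
\[
 R-\beta=\kappa_+(R)-\kappa_-(\beta)\ge\bigl(\kappa_+(\beta)-\kappa_-(\beta)\bigr)+\bigl(\kappa_+(R)-\kappa_+(\beta)\bigr).
\]
The first term is at least $2\alpha_1\alpha_2$. In the second term, $\kappa_+$ is decreasing in $u$, and by Hellmann--Feynman its derivative is bounded below by $-\|K'(u)\|$ in the direction of the top eigenvector. Differentiating the entries of $K$ produces the $\mathsf c_i=-\mathsf h_i'$ terms. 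Bounding them at $u=\beta$ (monotonicity of $\mathsf c_i$ makes $\beta$ the worst point) gives $|\kappa_+'|\le\varepsilon/2$, with $\varepsilon$ as in \eqref{eq:center-epsilon}; the numerator collects the four derivative contributions weighted by $\alpha_i^2$, and the denominator comes from $(1-H_1H_2)^{-2}$. Here the top eigenvector weights need care; I would bound them by $1$, and the trace version suggests summing both diagonal derivative entries. This yields $g\ge2\alpha_1\alpha_2-\tfrac{\varepsilon}{2}g$, hence $g\ge 4\alpha_1\alpha_2/(2+\varepsilon)$. Getting the exact constant $\varepsilon$, and in particular which $H^*$ factors appear with which $\alpha_i$, is the step where I expect the bookkeeping to be delicate.
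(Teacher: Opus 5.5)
Your Feshbach matrix $K(u)$ is $uI-S(u)$ in the paper's notation. The identifications $R=\kappa_+(R)$ and $\beta=\kappa_-(\beta)$ are true, but you should justify them. With $\beta I-C\succ0$, Haynsworth inertia applied to $RI-A_T\succeq0$ and to $\beta I-A_T$ (one negative and one zero eigenvalue) gives $K(R)\preceq RI$ and $K(\beta)\succeq\beta I$. Since $\det(uI-K(u))$ vanishes at both points, $\kappa_+(R)=R$ and $\kappa_-(\beta)=\beta$.

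The genuine gap is the derivative step. Put $M(u)=-K'(u)=B(uI-C)^{-2}B^{\mathsf T}$, where $C$ is the compression of $A_T$ to $\operatorname{span}\{p_1,p_2\}^\perp$ and $B$ is the coupling block. It is positive semidefinite, $M(u)\preceq M(\beta)$ for $u\ge\beta$, and $\operatorname{tr}M(\beta)=\varepsilon$. Hellmann--Feynman gives $-\kappa_+'=v_+^{\mathsf T}Mv_+$, and the only general bound is $\lambda_{\max}(M)\le\operatorname{tr}M$, not $\tfrac12\operatorname{tr}M$. Your bound $|\kappa_+'|\le\varepsilon/2$ is in fact false in the most natural case. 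If $H_1\cong H_2$ with matched roots, then $v_+=(1,1)/\sqrt2$ for every $u$, and
\[
 -\kappa_+'(\beta)=\tfrac12\varepsilon+M_{12}(\beta),\qquad
 M_{12}(\beta)=\frac{\alpha_1\alpha_2\,(C_1^*H_2^*+C_2^*H_1^*)}{(1-H_1^*H_2^*)^2}>0 .
\]
Charging the whole loss to the top branch therefore yields only $g\ge2\alpha_1\alpha_2/(1+\varepsilon)$. This is a real loss: even with the later bound $\varepsilon\le319/1764$ it gives $\tfrac{3528}{2083}\alpha_1\alpha_2<\tfrac{12}{7}\alpha_1\alpha_2$, which is what Theorem~\ref{thm:center-rigidity} needs. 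Bounding eigenvector weights by $1$, or summing diagonal derivative entries inside a bound for $\kappa_+'$ alone, cannot recover the factor $\tfrac12$.

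The missing idea is to spread the derivative loss over both branches and to use the off-diagonal gap at both endpoints. From $\kappa_+(R)=R$ and $\kappa_-(\beta)=\beta$ one has the exact identity
\[
 2g=\bigl[\kappa_+(R)-\kappa_-(R)\bigr]+\bigl[\kappa_+(\beta)-\kappa_-(\beta)\bigr]
 +\bigl[\operatorname{tr}K(R)-\operatorname{tr}K(\beta)\bigr].
\]
Each of the first two brackets is at least $2\alpha_1\alpha_2/D\ge2\alpha_1\alpha_2$, where $D=1-\mathsf h_1\mathsf h_2\in(0,1]$ is evaluated at $R$ and at $\beta$ respectively. The last bracket is at least $-g\operatorname{tr}M(\beta)=-\varepsilon g$. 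Hence $(2+\varepsilon)g\ge4\alpha_1\alpha_2$.

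This is precisely the paper's proof. There it is phrased as $\operatorname{tr}\bigl(S(R)-S(\beta)\bigr)\ge4\alpha_1\alpha_2$, using $S(R)\succeq0\succeq S(\beta)$ and the trace-versus-off-diagonal inequality for semidefinite $2\times2$ matrices. This is combined with $\operatorname{tr}\bigl(S(R)-S(\beta)\bigr)\le g\operatorname{tr}S'(\beta)=g(2+\varepsilon)$. The factor $\tfrac12$ comes from counting the gain $2\alpha_1\alpha_2$ twice, not from a sharper bound on a single branch.
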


\begin{proof}
Let $P$ project onto the two Perron vectors and put $Q=I-P$.
With $v_i=(I-p_ip_i^{\mathsf T})e_{a_i}$, the complementary compression is
\[
 C=QA_TQ=
 \begin{pmatrix}
 A_{H_1}|_{p_1^\perp}&v_1v_2^{\mathsf T}\\
 v_2v_1^{\mathsf T}&A_{H_2}|_{p_2^\perp}
 \end{pmatrix}.
\]
The diagonal blocks of $\beta I-C$ are positive definite.
A rank-one Schur complement shows that $\beta I-C$ is positive definite
precisely when $H_1^*H_2^*<1$, since
$v_i^{\mathsf T}(\beta I-A_{H_i}|_{p_i^\perp})^{-1}v_i=H_i^*$.
Thus every eigenvalue of $C$ is below $\beta$.

Put $D(u)=1-\mathsf h_1(u)\mathsf h_2(u)$.
Eliminating the complement gives the two-dimensional Schur matrix
\[
 S(u)=
 \begin{pmatrix}
 u-r_1-\alpha_1^2\mathsf h_2(u)/D(u)&-\alpha_1\alpha_2/D(u)\\
 -\alpha_1\alpha_2/D(u)&u-r_2-\alpha_2^2\mathsf h_1(u)/D(u)
 \end{pmatrix}.
\]
For example, if $t_i$ are the two Perron-mode coefficients and $\xi_i$ the
complementary root coordinates, their equations are
$\xi_1=\mathsf h_1(u)(\alpha_2t_2+\xi_2)$ and
$\xi_2=\mathsf h_2(u)(\alpha_1t_1+\xi_1)$; solving these gives the formula.
The fixed-space Schur representation yields
\[
 S'(u)=I+B(uI-C)^{-2}B^{\mathsf T}
 \preceq S'(\beta)=I+K\qquad (u\ge\beta),
\]
where $B=PA_TQ$ in orthonormal coordinates. The matrix order follows from
functional calculus for the fixed symmetric matrix $C$. Direct
differentiation of $S$ gives
\[
 K=\frac1{(1-H_1^*H_2^*)^2}
 \left[
 C_1^*\binom{\alpha_1H_2^*}{\alpha_2}
       \begin{pmatrix}\alpha_1H_2^*&\alpha_2\end{pmatrix}
 +C_2^*\binom{\alpha_1}{\alpha_2H_1^*}
       \begin{pmatrix}\alpha_1&\alpha_2H_1^*\end{pmatrix}
 \right].
\]
Thus $K\succeq0$ and $\operatorname{tr}K=\varepsilon$.

Since $\beta$ is the second eigenvalue of $A_T$, the matrix
$\beta I-A_T$ has one negative eigenvalue and at least one zero
eigenvalue. As $\beta I-C\succ0$, Schur inertia shows that
$S(\beta)$ is negative semidefinite. Likewise, $RI-A_T\succeq0$
implies $S(R)\succeq0$. The regular resolvents are nonnegative
and nonincreasing on $[\beta,\infty)$, so $0<D(u)\le1$ there.
For a semidefinite $2$-by-$2$ matrix, the absolute value of its trace is
at least twice the absolute value of its off-diagonal entry. Hence
\[
 \operatorname{tr}\bigl(S(R)-S(\beta)\bigr)
 \ge2\alpha_1\alpha_2
       \left(\frac1{D(R)}+\frac1{D(\beta)}\right)
 \ge4\alpha_1\alpha_2.
\]
On the other hand, integration of $S'(u)\preceq I+K$ gives
\[
 \operatorname{tr}\bigl(S(R)-S(\beta)\bigr)
 \le g\,\operatorname{tr}(I+K)=g(2+\varepsilon).
\]
Combining the two inequalities proves the claim.
\end{proof}

\subsection{The criterion above four}

\begin{theorem}
\label{thm:center-rigidity}
Suppose $T$ minimizes the adjacency gap among all trees of its order.
If $\beta>4$ and the defect in \eqref{eq:center-total-defect} satisfies
$\mathcal E\le3$, then $T$ is a balanced double comet.
\end{theorem}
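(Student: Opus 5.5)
We use the spectral-center decomposition of Lemma~\ref{lem:center-dichotomy} for the minimizer $T$. The aim is to show that each selected branch $H_i$ is a broom relative to $w_i$, with its leaves at $w_i$, that $z=0$, and that the two handles join into a balanced double comet. The argument compares $g(T)$ with the explicit competitors $C(k,n-2k)$ and $C(k+1,n-2k-2)$, where $k$ is an integer near $R^2$. We combine the lower bound \eqref{eq:center-factor-two} with the path transfer of Lemma~\ref{lem:root-transfer}. We then show that any positive defect, or any mismatch between the two hubs, costs a factor larger than the comparison ratio between adjacent double-comet parameters.

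\emph{Local structure.} Because $\beta>4$ and $\mathcal E\le3$, each $E_i\le3$. Lemma~\ref{lem:distance-two} therefore gives $s_i\le9$ vertices at distance two from $w_i$. Take $M=\|A_{H_i-w_i}\|^2$ and bound it by Lemma~\ref{lem:path-budget}, since $H_i-w_i$ is a forest whose edges off a chosen path number at most $s_i+E_i$. Lemma~\ref{lem:rooted-schur} then gives $0\le r_i^2-\deg(w_i)\le s_i/(r_i^2-M)$, which is $O(1/R^2)$, together with $p_i(w_i)^2\ge\tfrac12-O(R^{-4})$. Combined with \eqref{eq:center-integer-budget} and $\eta=2Rg$ exponentially small, this shows $\deg(w_1)$ and $\deg(w_2)$ are both the integer $d=\lceil R^2\rceil$ (or $\lfloor\cdot\rfloor$), so the two hubs have a common degree. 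We also get $N_1+N_2=2d-2$ or close to it, up to a defect that is absorbed into $\mathcal E$.

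\emph{Sharp lower bound.} Next we verify the hypotheses of Lemma~\ref{lem:center-feshbach} in the edge case. The complementary modes at the root have total weight that decays like $\sum_j q^{2j}$ along the path, which gives $H_i^*$ bounded away from $1$. Near-extremality then gives $\varepsilon=O(R^{-2})$, and so $g\ge(2-O(R^{-2}))\alpha_1\alpha_2$. In the vertex case we use the second bound in \eqref{eq:center-compression} instead. Each $\alpha_i$ is estimated from below by refining Lemma~\ref{lem:root-transfer}. Along the $a_i$--$w_i$ path, every off-path vertex (there are at most $E_i$ of them besides the hub's leaves) contributes a loss factor. We want to show this loss is at least $\min(R^{2},\,q^{-2})^{1/2}$ per unit of defect relative to the pure broom. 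In effect we get $g\gtrsim 2\cdot\tfrac12(1-q^2)^2 q^{D_1+D_2+\chi}\cdot c^{\mathcal E}$ with $c>1$. The upper side comes from the exact certificate \eqref{eq:identical-caps-certificate} applied to $C(d-1,\ell)$ with $\ell=n-2(d-1)$, which gives essentially $(1-q^2)^2q^{\ell-1}$ with the same $q$ to first order.

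\emph{Exclusion of defects.} Equation \eqref{eq:center-distance-identity} says that each unit of $\mathcal E$ shortens $D_1+D_2+\chi$ by one. That multiplies the lower bound by roughly $q^{-1}\approx R>2$, while the correction constants are $1+O(R^{-2})$. So any $\mathcal E\ge1$ gives $g(T)>g(C(d-1,\ell))$ or $g(T)>g(C(d,\ell-2))$, whichever competitor matches the parity of $n$ and the value of $N_1+N_2$. This contradicts minimality, hence $\mathcal E=0$. Zero defect then forces $E_i=0$ and $z=0$, which means $H_i$ equals the radius-two ball at $w_i$ union the path $Q_i$. The leaves counted in $s_i$ must be empty, since otherwise $\deg(w_i)<N_i$ leaves a vertex over budget. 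So $H_i$ is a broom relative to $w_i$ with $d-1$ leaves at $w_i$, and $T$ is $C(d-1,\ell)$.

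\emph{Main obstacle.} The hardest step is the comparison at the threshold between adjacent parameters $k$ and $k+1$. Here the ratio $g(C(k,\ell))/g(C(k+1,\ell-2))$ can be close to $1$. The error terms, namely $\varepsilon$, the factor $(1-q^2)^2$ versus the exact $b_L$, and the difference between $q(R)$ of $T$ and of the competitor, must all be controlled to relative accuracy $o(1)$ beyond a single factor of $q$. The plan is to evaluate both sides at the same $R$ using monotonicity of $b_L$, as in the proof of Theorem~\ref{thm:comet-upper}. We would try first to absorb the $R$-mismatch through $R^2-r_i^2\le\eta$.
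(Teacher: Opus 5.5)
Your outline has the paper's skeleton: lock the hub degrees, use Lemma~\ref{lem:center-feshbach} (or the vertex compression), transfer along the root paths, and compare with $C(N-1,A+2)$ and $C(N,A)$, where $A=n-2N$. The decisive step, excluding a positive defect, does not go through as written.

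After Lemma~\ref{lem:root-transfer} and \eqref{eq:center-distance-identity}, the lower bound is $g>c\,y^{-(A-\mathcal E)/2}$ with an explicit constant $c$ ($c=7/10$ works). Here $y=q(R)^{-2}$ belongs to $T$. Theorem~\ref{thm:comet-upper}, by contrast, bounds the competitors by $(N-2)^{-A/2}$ and $(N-1)^{-(A-2)/2}$, whose bases are fixed by the competitors' own leaf numbers. Against $C(N-1,A+2)$ the ratio is therefore $c\,y^{\mathcal E/2}\bigl((N-2)/y\bigr)^{A/2}$. The base mismatch is raised to the power $A/2$, and nothing in your argument bounds $A$ in terms of $N$. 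So the corrections are not $1+O(R^{-2})$, and a single factor $q^{-1}$ per unit of defect does not by itself beat either competitor.

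Your proposed remedies do not address this. The bound $R^2-r_i^2\le\eta$ compares $T$ with its own branches, not with a competitor. Evaluating both trees at a common argument would require $R\le\Lambda$, where $\Lambda^2=N+1/(N-2)$ is the point at which the proof of Theorem~\ref{thm:comet-upper} controls $b_L$. Nothing you have shown excludes $R>\Lambda$, that is, $y>N-2$. Also, both competitors have order $n$, so parity plays no role in choosing between them.

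The missing ingredients are a sharp window for $y$ and a two-case use of minimality.
\begin{itemize}
\item Since $E_1+E_2\le3$, one branch has $E_i\le1$. Its residual squared norm is then below $\Phi(1)<5$ and $|N_2(w_i)|\le2$. Lemma~\ref{lem:rooted-schur} and $\eta\le N^{-2}$ give $0\le R^2-N\le B_N=9/(4(N-5))$, hence $N-3<y<N-2+B_N<N-1$.
\item If $y\le N-2$, then $C(N-1,A+2)$ loses by the factor $c\sqrt y>1$.
\item If $y>N-2$, minimality against $C(N,A)$ gives $\frac A2\log\frac{N-1}{y}<\log\frac{N-1}{c}$. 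Because $N-1-y>1-B_N$, this bounds $A$. It then follows that $\frac A2\log\frac{y}{N-2}<\frac67$, so $g/g(C(N-1,A+2))>c\sqrt y\,e^{-6/7}>1$.
\end{itemize}
The bound $6/7$ is nearly attained at $N=16$. Since $\beta>4$ is the only size assumption, you need explicit constants throughout, not $O(R^{-2})$ asymptotics.

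The same issue affects your local step. Bounding the off-path edges of $H_i-w_i$ by $s_i+E_i\le12$ gives only $r_i^2-\deg(w_i)\le 9/(r_i^2-\Phi(12))$, which exceeds $1$ for $r_i^2<23$. The paper instead uses the exact count $E_i+j$ with $j=N_i-\deg(w_i)$, and derives a contradiction from $j\ge1$. This yields $\deg(w_i)=N_i$ and then $N_1+N_2=2N$ exactly, with no defect to absorb. The resulting bound $\|A(H_i-w_i)\|^2<6$, together with interlacing, is what verifies the hypotheses of Lemma~\ref{lem:center-feshbach}; decay along the path is not the mechanism.

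Finally, off-path vertices cost nothing in Lemma~\ref{lem:root-transfer}, since the terms $b_j\ge0$ only help. The whole gain from a defect enters through \eqref{eq:center-distance-identity}.
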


\begin{proof}
We first lock the squared radii to a common integer degree.
Each selected component has at least $18$ vertices, because
$r_i^2>16$ and $r_i^2\le h_i-1$. Hence $n\ge36$.
Corollary~\ref{cor:small-gap} and $R^2\le n-1$ give
\begin{equation}
\label{eq:center-small-eta}
 g\le n^{-3},\qquad
 \eta=2Rg\le2n^{-5/2}<\frac1{100}.
\end{equation}

For one component temporarily omit subscripts and put
$d=\deg_H(w)$, $s=|N_2(w)|$, and $j=N-d$.
The Perron equation squared at a maximum gives $d\le r^2\le d+s$.
Consequently $j$ is a nonnegative integer with $j\le s$.
Lemma~\ref{lem:distance-two}, with $r>4$ and $E\le3$, gives $s\le9$.

Let $B=A(H-w)$. If $D\ge1$, the number of edges of $H-w$ outside its
remaining root path is exactly
\[
 h-D-d=N+E-d=E+j.
\]
If $D=0$, the residual forest has $E+j-1$ edges instead. Moreover,
$s\le E+j+1$, because at most one distance-two vertex lies on the root path,
and every other one uses a distinct edge outside it.
Lemma~\ref{lem:path-budget}, applied to the component meeting that path,
therefore gives
\begin{equation}
\label{eq:center-residual-prelock}
 \|B\|^2<\Phi(E+j),\qquad
 s\le\min(9,E+j+1),\qquad \Phi(u)=2+\sqrt{u^2+4}.
\end{equation}
For a residual component disjoint from the path, its squared radius is at
most its number of edges. This also proves the displayed norm bound when
$D\le1$, where the remaining path is empty or a singleton.

Suppose $j\ge1$. We have $N\ge16$, $j\le9$, and
$\Phi(E+j)\le\Phi(j+3)<j+6$. Lemma~\ref{lem:rooted-schur} now gives
\[
 j\le r^2-d
 \le\frac{s}{r^2-\|B\|^2}
 <\frac{\min(9,j+4)}{10-j}.
\]
For $1\le j\le4$, $j(10-j)>j+4$; for $5\le j\le9$,
$j(10-j)\ge9$. Both contradict the last strict inequality.
Thus $N_i=\deg_{H_i}(w_i)$ for each component.

The exact residual edge budget consequently improves to $E_i$, or to
$E_i-1$ if $D_i=0$. It follows that
\begin{equation}
\label{eq:center-residual-locked}
 \begin{aligned}
 &\|A(H_i-w_i)\|^2<\Phi(E_i)\le\Phi(3)<6,\qquad
 |N_2(w_i)|\le E_i+1\le4,\\
 &0\le r_i^2-N_i<\frac4{N_i-6}\le\frac25.
 \end{aligned}
\end{equation}
Intervals $[N_i,N_i+2/5)$ for different integer $N_i$ are separated by at
least $3/5$. Equations~\eqref{eq:center-integer-budget} and
\eqref{eq:center-small-eta} therefore force
$N_1=N_2=N\ge16$. Since $h_i\ge N+1$, we have $n\ge2N+2$ and hence
$\eta\le1/N^2$.

The mass estimate in Lemma~\ref{lem:rooted-schur} gives
\begin{equation}
\label{eq:center-mass-locked}
 p_i(w_i)^2\ge
 \left(2+\frac8{(N-6)^2}\right)^{-1}
 \ge\frac{25}{52}>\frac{12}{25}.
\end{equation}
At least one of $E_1,E_2$ is at most one. For that component the residual
squared norm is below $\Phi(1)<5$ and $|N_2(w_i)|\le2$. Therefore
\begin{equation}
\label{eq:center-radius-window}
 \begin{aligned}
 0\le R^2-N&\le\frac2{N-5}+\frac1{N^2}
 \le B_N:=\frac9{4(N-5)},\\
 N-3&<y<N-2+B_N<N-1.
 \end{aligned}
\end{equation}

We next control the other spectral modes in the edge-center alternative.
Write $s_i=\alpha_i^2\le1/2$. Bipartite symmetry gives equal root spectral
weights at the two Perron eigenvalues $\pm r_i$.
Interlacing in \eqref{eq:center-residual-locked} places every other
eigenvalue in $(-\sqrt6,\sqrt6)$. Pairing the corresponding root spectral
weights, and retaining the negative Perron pole separately, gives at
$u=\beta>4$
\begin{equation}
\label{eq:center-regular-four}
 \begin{split}
 H_i^*&\le(1-2s_i)\frac25+\frac{s_i}{8}\le\frac25,\\
 C_i^*&\le(1-2s_i)\frac{11}{50}+\frac{s_i}{64}
       \le\frac{11}{50}-\frac25s_i.
 \end{split}
\end{equation}
Indeed the kernels per unit paired spectral mass are
$u/(u^2-\theta^2)$ and $(u^2+\theta^2)/(u^2-\theta^2)^2$.
They increase with $\theta^2$ and decrease with $u>\sqrt6$, so evaluation
at $\theta^2=6$, $u=4$ gives these bounds.
In particular $H_1^*H_2^*\le4/25<1$, and
Lemma~\ref{lem:center-feshbach} applies.

The numerator in \eqref{eq:center-epsilon} is at most
\[
\begin{aligned}
 &\left(\frac{11}{50}-\frac25s_1\right)
       \left(\frac4{25}s_1+s_2\right)
 +\left(\frac{11}{50}-\frac25s_2\right)
       \left(s_1+\frac4{25}s_2\right)\\
 &\quad=\frac{29}{25}
       \left[\frac{11}{50}(s_1+s_2)-\frac45s_1s_2\right]
       -\frac8{125}(s_1-s_2)^2
 \le\frac{319}{2500}.
\end{aligned}
\]
Here the expression in square brackets is bilinear in $(s_1,s_2)$,
so its maximum on $[0,1/2]^2$ occurs at a vertex and equals $11/100$.
Since $(1-H_1^*H_2^*)^2\ge(21/25)^2$, the trace bound gives
\[
 \varepsilon\le\frac{319}{1764}<\frac15,
 \qquad
 g\ge\frac{20}{11}\alpha_1\alpha_2
   >\frac{12}{7}\alpha_1\alpha_2.
\]
For a vertex center use Lemma~\ref{lem:center-compression} instead.
After its extra vertex is included in the path exponent, its multiplier is
$2/(Rq)=2/(1+1/y)>13/7>12/7$ by \eqref{eq:center-radius-window}.

Set $A=n-2N\ge2$. Applying Lemma~\ref{lem:root-transfer} and
\eqref{eq:center-mass-locked} in either alternative gives
\begin{equation}
\label{eq:center-gap-four}
 g\ge\frac{12}{7}\frac{12}{25}
       \left(\frac{12}{13}\right)^2y^{-(A-\mathcal E)/2}
 >c\,y^{-(A-\mathcal E)/2},\qquad c=\frac7{10}.
\end{equation}
The coefficient comparison is $207360>207025$.

It remains to exclude a positive defect by comparing adjacent comet
parameters. The same-order balanced comets
$C_N=C(N-1,A+2)$ and $C_{N+1}=C(N,A)$ are feasible. By
Theorem~\ref{thm:comet-upper},
\begin{equation}
\label{eq:center-adjacent-comets}
 g(C_N)\le(N-2)^{-A/2},\qquad
 g(C_{N+1})\le(N-1)^{-(A-2)/2}.
\end{equation}
Suppose $\mathcal E\ge1$. If $y\le N-2$, the first comparator and
\eqref{eq:center-gap-four} give
$g/g(C_N)>c\sqrt y>c\sqrt{13}>1$, a contradiction.
Thus $y>N-2$. Minimality and the second comparator imply
\[
 \frac A2\log\frac{N-1}{y}
 \le\log\frac{N-1}{c}-\frac{\mathcal E}{2}\log y
 <\log\frac{N-1}{c}.
\]
Using \eqref{eq:center-radius-window} and the elementary bounds
$\log(b/a)\le(b-a)/a$ and $\log(b/a)\ge(b-a)/b$ for $b\ge a>0$,
we obtain
\begin{equation}
\label{eq:center-transfer-loss}
 \frac A2\log\frac y{N-2}
 \le\frac{9(N-1)}{(N-2)(4N-29)}
       \log\frac{N-1}{c}<\frac67.
\end{equation}
For the last strict inequality, the expression decreases for real $N\ge16$:
both $(N-1)/(N-2)$ and
$\log((N-1)/c)/(N-29/4)$ decrease. The derivative of the latter has
numerator
$(N-29/4)/(N-1)-\log((N-1)/c)<1-\log(150/7)<0$.
At $N=16$ the expression is
$(27/98)\log(150/7)<(27/98)(28/9)=6/7$,
since $e^{28/9}>20(10/9)>150/7$.

Finally, \eqref{eq:center-gap-four}--\eqref{eq:center-transfer-loss} give
\[
 \frac{g}{g(C_N)}>c\sqrt y\,e^{-6/7}
 >\frac7{10}\frac{18}{5}\frac5{12}=\frac{21}{20}>1.
\]
Here $\sqrt{13}>18/5$ and $e^{6/7}<12/5$.
For explicit certificates, the exponential series gives
$e^3>\sum_{j=0}^{8}3^j/j!>20$ and
$e<1+1+1/2+(1/6)/(1-1/4)=49/18<11/4$;
then $11^6 5^7<4^6 12^7$ proves $e^{6/7}<12/5$.
These estimates also justify the preceding logarithm bound.

The contradiction forces $\mathcal E=0$. Thus $E_1=E_2=z=0$.
We already know that $\deg_{H_i}(w_i)=N$, so $h_i=N+D_i$.
Since $N\le r_i^2\le h_i-1$, we have $D_i\ge1$.
The $a_i$--$w_i$ path has $D_i$ edges, and exactly one of them is
incident to $w_i$. Together with the other $N-1$ edges incident to $w_i$,
these account for all $D_i+N-1=h_i-1$ edges of $H_i$.
Thus every vertex outside the path is a leaf adjacent to $w_i$.
Both selected components are rooted brooms with common hub degree $N$
and positive handle lengths. Joining the handles across the edge or
through the center vertex gives exactly the balanced double comet
$C(N-1,n-2N+2)$.
\end{proof}

\section{The global comparison}\label{sec:global}

We now prove that every gap minimizer of order at least $200$ satisfies
the hypotheses of Theorem~\ref{thm:center-rigidity}. Throughout this
section, $T$ is a gap minimizer of order $n\ge200$,
\[
 R=\lambda_1(T),\qquad \beta=\lambda_2(T),\qquad
 g=R-\beta,\qquad \ell=\log n,\qquad
 \eta=2Rg,\qquad \eta_n=2n^{-5/2}.
\]
All logarithms are natural. Whenever $R>2$, put
\[
 q=\frac{R-\sqrt{R^2-4}}2,\qquad y=q^{-2};
 \qquad R^2=y+2+y^{-1}.
\]
The spectral-center quantities $N_i,E_i,\mathcal E$ are as in
Section~\ref{sec:center}. In particular,
\begin{equation}\label{eq:global-floor}
 N_1+N_2\ge 2R^2-2\eta-2.
\end{equation}
Indeed $r_i\ge\beta$, $R^2-\beta^2\le\eta$, and
$\lfloor r_i^2\rfloor\ge r_i^2-1$.
Lemma~\ref{lem:center-bound} gives
\begin{equation}\label{eq:global-input}
 g\ge
 \frac{\beta(\beta-1)}{n-2\beta-2}
 (1-y^{-1})^2
 y^{-(n-N_1-N_2-\mathcal E)/2}.
\end{equation}
Lemma~\ref{lem:isolation} and Corollary~\ref{cor:small-gap} give
\begin{equation}\label{eq:global-isolation}
 \beta>2,\qquad R>2+\frac1{162},\qquad
 g\le n^{-3},\qquad 0<\eta\le\eta_n.
\end{equation}
The constants used below are verified in Appendix~\ref{app:constants}.

\subsection{A fixed comparison tree}

\begin{lemma}\label{lem:bootstrap-four}
If $n\ge200$, then $\lambda_2(T)>4$.
\end{lemma}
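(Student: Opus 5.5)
We argue by contradiction: assume $n\ge200$ and $\beta=\lambda_2(T)\le4$. Then $R=\beta+g\le4+n^{-3}$, so $y$ is bounded above: $R^2=y+2+y^{-1}$ gives $y<14$, roughly. The plan is to set the lower bound \eqref{eq:global-input} against a fixed comparator from Theorem~\ref{thm:comet-upper}. The key point is that when $\beta\le4$ the exponential base $y$ is at most about $14$. A double comet whose hub degree is significantly larger has a gap decaying like $(k-1)^{-(\ell-2)/2}$ with $k-1$ much larger than $y$, and this beats the minimizer.

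First, bound the exponent in \eqref{eq:global-input}. By \eqref{eq:global-floor}, $N_1+N_2\ge2R^2-2\eta-2>2R^2-3$. Using $\mathcal E\ge0$, this gives
\[
 g\ge c(\beta)\,(1-y^{-1})^2\,y^{-(n-2R^2+3)/2},
 \qquad c(\beta)=\frac{\beta(\beta-1)}{n-2\beta-2}.
\]
Since $\beta>2$, we get $c(\beta)\ge 2/n$, and $(1-y^{-1})^2$ is bounded below by an absolute constant because $R>2+1/162$ (Lemma~\ref{lem:isolation}). This constant is tiny, roughly $(1-q^2)^2$ with $q$ near $1$. More precisely, $1-1/y\approx 2\sqrt{R-2}\gtrsim 0.15$, a polynomially harmless factor. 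Since $R^2\le(4+n^{-3})^2$ and $y\le y_{\max}$, where $y_{\max}$ is determined by $R\le4+n^{-3}$ (so $y_{\max}<14$), we obtain
\[
 g\ge \frac{C_0}{n}\,y_{\max}^{-n/2}\,y^{R^2-3/2}\ge \frac{C_0}{n}\,14^{-n/2}.
\]
The function $y^{-(n-2R^2)/2}$ is increasing in $R$ only in the regime where we need it, so some care is needed here. The cleanest route is to note that the base satisfies $y\le y_{\max}$, that the exponent $(n-2R^2+3)/2$ is positive, and then to take the worst case, $y=y_{\max}$ with the smallest admissible $R^2$ in the subtracted term. One could simply drop $2R^2-3\ge0$.

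Second, choose the comparator $C(k,n-2k)$ with a fixed fraction, say $k=\lfloor n/c\rfloor$ for a suitable constant, or more simply $k$ near $n/4$ as in Corollary~\ref{cor:small-gap}. That gives $g_n\le (k-1)^{-(n-2k-2)/2}\approx (n/4)^{-n/4}$, which is superexponentially small. It is therefore much smaller than $(C_0/n)14^{-n/2}$ once $\tfrac n4\log(n/4)>\tfrac n2\log14+\log n+O(1)$, that is, once $\log(n/4)>2\log 14\approx5.28$, i.e.\ $n/4>196$. That fails at $n=200$, so the fraction must be optimized. Minimize $\frac{\ell-2}{2}\log(k-1)$ over $k$ with $\ell=n-2k$. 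The optimum is near $k\approx n/(2\log n)$, which gives an exponent around $\frac n2\log\frac{n}{2\log n}$. At $n=200$ this is $\frac n2\log(18.9)\cdot(1-o(1))$, which must exceed $\frac n2\log 14+\log n$ plus constants.

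The main obstacle is this numerical margin at $n=200$. The gain per half-unit of path, $\log(k-1)$ versus $\log y_{\max}$, with $k-1\approx17$ against $14$, is only about $0.19$ per unit. It must absorb the loss of $2k$ vertices to the leaves, the factors $1/n$ and $(1-1/y)^2$, and the prefactor. I would therefore pick an explicit $k$, about $k=18$ or $20$, and track exactly. I would use the sharp version \eqref{eq:comet-factor-one-strong}, and in the lower bound I would retain $y^{R^2}$ rather than discarding it. I would also replace $y_{\max}$ by the precise value at $R=4$, namely $y=(2+\sqrt3)^2\approx13.93$. It then remains to verify that the inequality holds for all $n\ge200$ with the parameter $k$ fixed, which follows from monotonicity in $n$ because the exponent difference grows linearly. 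The routine constant checks would go in Appendix~\ref{app:constants}.
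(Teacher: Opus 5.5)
There is a genuine gap, and it sits exactly at the step you flag as needing ``some care'': the coupling between the base $y$ and the floor $N_1+N_2\ge 2R^2-2\eta-2$. Both routes you actually propose discard the decisive factor. You write $y^{-(n-2R^2+3)/2}=y^{-n/2}y^{R^2-3/2}$ and bound $y^{-n/2}\ge y_{\max}^{-n/2}$. You then drop $y^{R^2-3/2}\ge1$, or evaluate it at the smallest admissible $R^2$. Either way you use two worst cases at opposite ends of the range of $y$.

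At $n=200$ this yields only $\log(1/g)\le100\log14+\log100+O(1)$, which is about $270$. Theorem~\ref{thm:comet-upper} gives at best $\log(1/g_n)\ge\max_k(99-k)\log(k-1)\approx235$, attained near $k=24,25$. So no choice of $k$ produces a contradiction, and neither the factor $1-2/(k(k+1))$ nor the value $y_{\max}=7+4\sqrt3$ helps. At this order the leaf loss $(k+1)\log(k-1)$ is never paid for by the gain $\tfrac n2\log\tfrac{k-1}{14}$ alone. Your closing advice to ``retain $y^{R^2}$'' points the right way, but in the factored form it is empty. A uniform lower bound for $y^{R^2-3/2}$ can only use $y>8/7$ and $R^2>4$.

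The missing idea is to eliminate $R$ through $R^2=y+2+y^{-1}$, so that the whole lower bound is a function of $y$ alone. Put $a=n/2-1+\eta$. Combine \eqref{eq:global-floor}, \eqref{eq:global-input}, $\mathcal E\ge0$ and $c(\beta)>2/n$, and discard $-y^{-1}\log y\le0$. This gives
\[
 \log(1/g)\le\log(n/2)+\Phi_a(y),\qquad
 \Phi_a(u)=(a-u)\log u-2\log(1-u^{-1}).
\]
One checks $u\Phi_a'(u)=a-u(1+\log u)-2/(u-1)>0$ on $[8/7,14]$. Here $y>8/7$ comes from $R>2+1/162$, and $y<14$ from $\beta\le4$. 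Hence the worst case is $y$ near $14$ \emph{together with} $R^2$ near $16$. There the floor contributes $-(y+1+y^{-1})\log y\approx-40$, which is precisely what pays for the $2k$ leaves of the comparator. Also, $(1-y^{-1})^2$ then costs only about $0.15$, instead of the roughly $4$ you would pay by bounding it separately near $y=8/7$.

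The resulting bound $\log(1/g)\le(n/2-15+\eta)\log14+\log n$ is beaten by $C(25,n-50)$. That comet gives $\log(1/g)\ge(n/2-26)\log24$, and the margin grows linearly in $n$. With this fully coupled bound your $k=20$ also works, with margin about $3$ at $n=200$. However, $k=18$ does not survive the rounding $y<14$.
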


\begin{proof}
The function $u+2+u^{-1}$ increases for $u>1$.
Since its value at $8/7$ is $4+1/56<4+2/81$,
\eqref{eq:global-isolation} gives $y>8/7$.
Put
\[
 a=n/2-1+\eta,\qquad
 \Phi_a(u)=(a-u)\log u-2\log(1-u^{-1})\quad(u>1).
\]
Since $\beta>2$, one has
$\beta(\beta-1)/(n-2\beta-2)>2/n$.
Combining \eqref{eq:global-floor} and \eqref{eq:global-input},
using $\mathcal E\ge0$ and discarding the nonpositive term
$-y^{-1}\log y$, gives
\begin{equation}\label{eq:bootstrap-coarse}
 \log(1/g)\le\Phi_a(y)+\log(n/2).
\end{equation}

Suppose that $\beta\le4$. Then $R^2\le16+\eta<17$, so $y<15$ and
\[
 y=R^2-2-y^{-1}<14+\eta-1/15<14,
\]
because $\eta\le\eta_n<1/15$.
For $8/7\le u\le14$, the bound $\log14<8/3$ yields
\[
 u\Phi_a'(u)
 =a-u(1+\log u)-\frac2{u-1}
 >99-14\left(1+\frac83\right)-14
 =\frac{101}{3}>0.
\]
Thus $\Phi_a$ increases on this interval. Since $(14/13)^2<2$,
\eqref{eq:bootstrap-coarse} implies
\begin{equation}\label{eq:bootstrap-refined}
 \log(1/g)\le(n/2-15+\eta)\log14+\log n.
\end{equation}
The feasible same-order comparator $C(25,n-50)$ gives
\[
 \log(1/g)\ge(n/2-26)\log24
\]
by Theorem~\ref{thm:comet-upper}.
The difference between this lower bound and the upper bound in
\eqref{eq:bootstrap-refined} is at least
\[
 D_4(n)=(n/2-26)\log24-(n/2-15)\log14
        -\log n-\eta_n\log14.
\]
Since $\eta_n$ decreases,
\[
 D_4'(n)>\frac12\log(12/7)-\frac1n
 >\frac5{24}-\frac1n>0\qquad(n\ge200).
\]
At $n=200$, the estimates in Appendix~\ref{app:constants} and
$\eta_{200}\log14<1/1000$ give
\[
 D_4(200)>
 74\frac{19}{6}-85\frac83-\frac{16}{3}-\frac1{1000}
 =\frac73-\frac1{1000}>0.
\]
This contradicts \eqref{eq:bootstrap-refined} for every $n\ge200$.
\end{proof}

\subsection{The optimized comparison and the integer defect}

For $a>1$ and $u>0$, define
\[
 f_a(u)=(a-u)\log u,\qquad F(a)=\max_{u>0}f_a(u).
\]
Let $H(u)=u(1+\log u)$ for $u>1$.
Since $H$ is strictly increasing from $1$ to infinity, there is a
unique $\nu_a>1$ satisfying
\[
 H(\nu_a)=\nu_a(1+\log\nu_a)=a.
\]
The function $f_a$ is strictly concave, and
$f_a'(\nu_a)=0$, so $\nu_a$ is its unique maximizer.
Consequently, $\nu_a$ increases with $a$ and $F'(a)=\log\nu_a$.

The equivalent formula $\nu_a=a/W(ea)$ uses the positive real branch
of the Lambert function.

\begin{lemma}\label{lem:rounded-comet}
For every $n\ge200$, putting $b=n/2-2$, one has
\begin{equation}\label{eq:rounded-comet}
 \log(1/g)\ge F(b)-\epsilon_n,
 \qquad \epsilon_n=\frac{3(\log n)^2}{n}.
\end{equation}
\end{lemma}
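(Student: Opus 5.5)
The bound comes from one balanced double comet, chosen to nearly maximize the comet exponent. Since $T$ is a minimizer, $g\le g(C(k,n-2k))$ for every admissible $k$. Theorem~\ref{thm:comet-upper} gives, for $k\ge3$,
\[
 \log(1/g)\ge\frac{n-2k-2}{2}\log(k-1).
\]
With $u=k-1$ the right side is $(n/2-2-u)\log u=f_b(u)$ with $b=n/2-2$. So it suffices to find an integer $u\ge2$ with $u\le(n-6)/2$ and $f_b(u)\ge F(b)-\epsilon_n$. We then take $k=u+1$, which satisfies $k\ge3$ and $\ell=n-2k\ge2$.

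The integer is the rounding of the continuous maximizer. Let $u$ be the integer nearest $\nu_b$, so that $|u-\nu_b|\le1/2$. We need $\nu_b\ge3/2$, so that $u\ge2$; this follows from $H(3/2)<b$ for $n\ge200$. The bound $u\le(n-6)/2$ holds because $\nu_b<b$, hence $\nu_b\le b/\log\nu_b$, which is far below $n/2-3$.

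The rounding loss comes from concavity. We have $f_b'(\nu_b)=0$ and
\[
 f_b''(t)=-\frac1t-\frac{b}{t^2},
\]
so by Taylor's theorem
\[
 F(b)-f_b(u)\le\tfrac18\max_{|t-\nu_b|\le1/2}\Bigl(\frac1t+\frac{b}{t^2}\Bigr).
\]
The main step is bounding this by $3(\log n)^2/n$. It needs a lower bound on $\nu_b$ of order $n/\log n$. From $H(\nu_b)=b$ we get $\nu_b=b/(1+\log\nu_b)\ge b/(1+\log b)$, because $\nu_b\le b$. On the interval $|t-\nu_b|\le 1/2$ we have $t\ge\nu_b-1/2$, and therefore
\[
 \frac{b}{t^2}\approx\frac{(1+\log\nu_b)^2}{b}\lesssim\frac{2(1+\log n)^2}{n}.
\]
Dividing by $8$ leaves a large margin. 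For instance, $(1+\log b)^2/(4b)$ plus the $1/t$ term and the correction from $\nu_b$ versus $\nu_b-1/2$ stays below $3(\log n)^2/n$ for $n\ge200$.

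The only delicate part is making these constants explicit at $n=200$, in the style of Appendix~\ref{app:constants}. We have $b=98$ and $\nu_b\approx 19$, so the loss is about $\tfrac18(1/18.5+98/18.5^2)\approx0.043$. This is well below $\epsilon_{200}=3(\log200)^2/200\approx0.42$. Beyond $n=200$, we would argue monotonicity: $(\log\nu_b)^2/b$ is dominated by $(\log n)^2/n$ up to the constant factor. The cleanest route is to bound $\nu_b\ge b/(1+\log b)$ and $\log b\le\log n$, and check that the resulting expression is at most $3(\log n)^2/n$ for $n\ge200$.
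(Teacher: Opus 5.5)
Your proposal is correct and takes essentially the same route as the paper: compare with the single comet $C(j+1,n-2j-2)$, where $j$ is the integer nearest $\nu_b$, and bound the rounding loss by Taylor's theorem with $|f_b''(t)|=1/t+b/t^2$ and a lower bound on $\nu_b$ of order $n/\log n$. The paper makes your informal final step exact: from $\nu_b\ge e$ it gets $\nu_b\le b/2<n/e$, hence $1+\log\nu_b<\ell$, hence $t\ge\nu_b/2\ge n/(6\ell)$ on the rounding interval, which gives $|f_b''|\le24\ell^2/n$ and exactly $\epsilon_n$ after the factor $1/8$; your bound $\nu_b\ge b/(1+\log b)$ combined with $t\ge\nu_b/2$ works equally well and with wide margin (note also that $\nu_{98}\approx23.6$ rather than $19$, which only makes your numerical estimate more conservative).
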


\begin{proof}
Round $\nu_b$ to a nearest integer $j$.
Since $b\ge2e=H(e)$, one has $\nu_b\ge e$ and hence
$\nu_b=b/(1+\log\nu_b)\le b/2<n/e$.
It follows that $1+\log\nu_b<\ell$, so
\[
 \frac{n}{3\ell}\le\frac b\ell<\nu_b\le b/2.
\]
In particular $j\ge2$ and $j\le b$, so
$C(j+1,n-2j-2)$ is feasible. On the half-unit rounding interval,
$u\ge\nu_b/2\ge n/(6\ell)$, and hence
\[
 |f_b''(u)|=b/u^2+1/u\le24\ell^2/n.
\]
Taylor's theorem gives $f_b(j)\ge F(b)-\epsilon_n$.
Theorem~\ref{thm:comet-upper} gives
$g\le\exp(-f_b(j))$, proving \eqref{eq:rounded-comet}.
\end{proof}

\begin{lemma}\label{lem:global-defect}
For every $n\ge200$, the total spectral-center defect of a gap
minimizer satisfies $\mathcal E\le3$.
\end{lemma}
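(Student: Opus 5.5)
We play the lower bound \eqref{eq:global-input} against the optimized upper bound of Lemma~\ref{lem:rounded-comet}, and keep the defect $\mathcal E$ explicit in the exponent. Taking logarithms in \eqref{eq:global-input} and using \eqref{eq:global-floor} gives
\[
 \log(1/g)\le \tfrac12\bigl(n-2R^2+2\eta+2-\mathcal E\bigr)\log y
 -2\log(1-y^{-1})+\log\frac{n-2\beta-2}{\beta(\beta-1)} .
\]
By Lemma~\ref{lem:bootstrap-four}, $\beta>4$, so the last term is at most $\log(n/12)$. Since $R^2=y+2+y^{-1}$, the first term is $\tfrac12(n-2y-2-2y^{-1}+2\eta-\mathcal E)\log y$. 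Put $b=n/2-2$ as in Lemma~\ref{lem:rounded-comet}. Up to the small terms $\eta\log y$, $-y^{-1}\log y\le 0$ and $-2\log(1-y^{-1})$, the right side is then
\[
 f_{b+1}(y)-\tfrac{\mathcal E}{2}\log y+\log(n/12).
\]
We have $y>R^2-3>13$, so $-2\log(1-y^{-1})<2/12$, and $\eta\log y\le\eta_n\log n$ is negligible.

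Next we bound $f_{b+1}(y)$ by $F(b+1)$, and compare $F(b+1)$ with $F(b)$ using $F'(a)=\log\nu_a$: this gives $F(b+1)-F(b)\le\log\nu_{b+1}$. Combining with Lemma~\ref{lem:rounded-comet},
\[
 F(b)-\epsilon_n\le F(b)+\log\nu_{b+1}-\tfrac{\mathcal E}{2}\log y+\log(n/12)+\tfrac16+\eta_n\log n ,
\]
that is,
\[
 \tfrac{\mathcal E}{2}\log y\le \log\nu_{b+1}+\log n-\log 12+\tfrac16+\epsilon_n+o(1).
\]

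This is the crux, and the main obstacle. The right side is about $2\log n-\log\log n$, because $\nu_{b+1}\approx n/(2\log n)$. For the conclusion $\mathcal E\le3$ we need $\tfrac{4}{2}\log y$ to exceed the right side, which forces $y$ to be comparable to $\nu_b$, not merely $y>13$. So first we must localize $y$ near $\nu_b$. We would get this from the same inequality with $\mathcal E\ge0$ dropped: it gives
\[
 f_{b+1}(y)\ge F(b+1)-O(\log n).
\]
Strict concavity, with $|f''|\gtrsim 1/\nu$ near the maximum and growing faster away from it, then gives $|y-\nu_{b+1}|=O(\sqrt{\nu\log n})$. Hence $\log y\ge\log\nu_{b+1}-o(1)$. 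We would also check that the crude bound $f_{b+1}\le F$ does not waste a $\log n$: away from the maximum, $f$ drops, which only helps. With $\log y\ge\log\nu_{b+1}-o(1)$, the inequality becomes
\[
 \tfrac{\mathcal E-2}{2}\log\nu_{b+1}\le \log n-\log\nu_{b+1}+O(1)=\log(2\log n)+O(1).
\]
Taking $\mathcal E=4$ gives $\log\nu\le\log\log n+O(1)$, which is false once $n$ is large.

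Finally, the explicit threshold $n\ge200$ must be verified by tracking the constants. We would take the following route. First derive a crude window $y\in[\nu_b/2,2\nu_b]$ from the concavity estimate. Then show that the function
\[
 \Psi(n)=2\log(\nu_b/2)-\log\nu_{b+1}-\log n+\log 12-\tfrac16-\epsilon_n-\eta_n\log n
\]
is positive at $n=200$ and increasing, in the same way that $D_4$ was treated in Lemma~\ref{lem:bootstrap-four}, with the numerical constants deferred to Appendix~\ref{app:constants}. I expect the numerics at $n=200$, where $\nu_b\approx 25$ and $\log\nu$ is far from dominant, to be tight. If they fail, I would replace the factor-two window by the sharper Taylor window and use the exact amplitude factor $(12/25)$ from \eqref{eq:center-mass-locked}-type bounds in place of $\beta(\beta-1)/(n-2\beta-2)$.
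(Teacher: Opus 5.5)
There is a genuine gap at the step you call the crux. From
\[
 \tfrac{\mathcal E}{2}\log y\le\log\nu_{b+1}+\log(n/12)+\tfrac16+\epsilon_n+o(1)
\]
and $\log y=\log\nu_{b+1}+o(1)$ you get $\tfrac{\mathcal E-2}{2}\log\nu_{b+1}\le\log n+O(1)$, not $\le\log n-\log\nu_{b+1}+O(1)$: you subtracted $\log\nu_{b+1}$ twice. For $\mathcal E=4$ the correct inequality reads $\log\nu_{b+1}\le\log(n/12)+O(1)$. This is \emph{true} for large $n$, because $\nu_b=b/(1+\log\nu_b)$ gives $\log\nu_b-\log(n/12)=\log 6-\log(1+\log\nu_b)+o(1)\to-\infty$. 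So there is no contradiction.

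Localizing $y$ cannot repair this. The worst admissible $y$ maximizes $f_{b+1}(y)-2\log y=f_{b-1}(y)$, and that maximizer lies inside your window. Your bounds therefore can at best demand $F(b)-F(b-1)\approx\log\nu_b>\log(n/12)+\cdots$, which fails for large $n$; as it stands your comparison only rules out $\mathcal E\ge5$ asymptotically. Concretely, your $\Psi$ has $\Psi(200)\approx-1.6$. It stays negative at $n=200$ even if $\log(\nu_b/2)$ is replaced by $\log\nu_b$, and $\Psi(n)\to-\infty$, so the proposed verification cannot be carried out.

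The fallback through the constant $12/25$ in \eqref{eq:center-mass-locked} is circular. That bound is proved inside Theorem~\ref{thm:center-rigidity} from $E_i\le3$, which you do not have when $\mathcal E\ge4$, since a single $E_i$ may be large.

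The loss occurs when you bound $\log\frac{n-2\beta-2}{\beta(\beta-1)}$ by $\log(n/12)$. This discards a factor $\beta^2\approx y$, which in the relevant range is about $\nu_b$ — exactly the missing $\log\nu$. The paper keeps it. From $y\le\beta^2+\eta-2$ one gets $6\beta(\beta-1)-5y\ge(\beta-3)^2+1-5\eta>0$, so the prefactor is at least $5y/(6n)$. The extra $y$ then lowers the parameter of $f$ by one more unit. For $\mathcal E\ge4$, whatever the value of $y$,
\[
 \log(1/g)\le f_{b-2+\eta}(y)+\log(6n/5)+\tfrac16\le F(b-2+\eta)+\log(6n/5)+\tfrac16,
\]
so no localization of $y$ is needed at all. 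Lemma~\ref{lem:rounded-comet} then forces $F(b)-F(b-2+\eta)<\log(6n/5)+3(\log n)^2/n+\tfrac16$. On the other hand, $F(b)-F(b-2+\eta)\ge(2-\eta_n)\log\nu_{b-2}$ and $\nu_{b-2}>(n/2-4)/(\log n-1)$. This lower bound is about $2\log n-2\log(2\log n)$, which beats $\log(6n/5)+O(1)$. The paper checks that the explicit difference $K(n)$ is increasing and that $K(200)>1/25$.
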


\begin{proof}
Put $b=n/2-2$. By Lemma~\ref{lem:bootstrap-four}, $\beta>4$ and $y>13$.
Since $y\le\beta^2+\eta-2$,
\[
 6\beta(\beta-1)-5y
 \ge(\beta-3)^2+1-5\eta>0.
\]
Thus $\beta(\beta-1)>(5/6)y$.
If $\mathcal E\ge4$, equations
\eqref{eq:global-floor}--\eqref{eq:global-input} imply
\[
 g\ge\frac5{6n}(1-y^{-1})^2
       \exp\bigl(-f_{b-2+\eta}(y)\bigr)
 \ge\frac5{6n}(1-y^{-1})^2
       \exp\bigl(-F(b-2+\eta)\bigr).
\]
We have dropped the additional factor $y^{1/y}\ge1$.
Comparison with \eqref{eq:rounded-comet}, using
$-2\log(1-y^{-1})\le2/(y-1)<1/6$, forces
\begin{equation}\label{eq:necessary-defect}
 F(b)-F(b-2+\eta)
 <\log(6n/5)+3\ell^2/n+1/6.
\end{equation}

Since $\ell>5$ and $e^2<8$,
\[
 H(n/e^2)=\frac n{e^2}(\ell-1)>n/2>b-2.
\]
Thus $\nu_{b-2}<n/e^2$, and the defining equation for $\nu_{b-2}$
gives
\[
 \nu_{b-2}=\frac{b-2}{1+\log\nu_{b-2}}
 >s(n),\qquad s(n)=\frac{n/2-4}{\ell-1}.
\]
For $n\ge200$, one has $1<s(n)<n$ and $\log n\le\sqrt n$, so
\[
 \eta_n\log s(n)\le\frac2{n^2}<\frac1{1000}.
\]
By monotonicity of $F'$,
\begin{equation}\label{eq:F-difference}
 F(b)-F(b-2+\eta)
 \ge(2-\eta_n)\log s(n)
 >2\log s(n)-\frac1{1000}.
\end{equation}
It therefore suffices to prove that
\[
 K(n)=2\log s(n)-\log(6n/5)-3\ell^2/n-1/6
 >\frac1{1000}.
\]
The function $K$ strictly increases for real $n\ge200$, because
\[
 \frac{d}{dn}\bigl(2\log s(n)-\log(6n/5)\bigr)
 =\frac2{n-8}-\frac2{n(\ell-1)}-\frac1n>0
\]
and $3\ell^2/n$ decreases.
At $n=200$, Appendix~\ref{app:constants} gives
\[
 s(200)>\frac{288}{13}>e^{37/12},
 \qquad \log200<\frac{16}{3}.
\]
Using also $\log(6/5)<1/5$, we obtain
\[
 K(200)>
 \frac{37}{6}
 -\left(\frac{16}{3}+\frac15+\frac{32}{75}+\frac16\right)
 =\frac1{25}>\frac1{1000}.
\]
Consequently, \eqref{eq:F-difference} contradicts
\eqref{eq:necessary-defect} for every $n\ge200$.
\end{proof}

\begin{proof}[Proof of Theorem~\ref{thm:main}]
By Lemmas~\ref{lem:bootstrap-four} and~\ref{lem:global-defect},
every gap minimizer of order $n\ge200$ has $\beta>4$ and
$\mathcal E\le3$. Theorem~\ref{thm:center-rigidity} therefore
identifies it as a balanced double comet.
\end{proof}

\section{Further questions}
\label{sec:further}

The finite threshold comes from two separate steps: forcing
$\lambda_2>4$ and then forcing the total center defect to be at most
three. The center criterion itself applies at any order satisfying
these hypotheses. Thus one way to lower the threshold is to strengthen
the global comparison, while another is to prove an analogous rigidity
criterion at smaller spectral radii. The orders below $200$ are not
resolved by the present proof.

Even after the extremal family is known, selecting its optimal integer
parameter remains a separate problem. The continuous maximizer defined by
\[
 \nu_b(1+\log\nu_b)=b,\qquad b=n/2-2,
\]
and used in Section~\ref{sec:global} selects an effective comparison tree.
Its rounding does not by itself determine the exact minimizer of
$g(C(k,n-2k))$. Establishing that integer choice and any uniqueness
statement would strengthen Theorem~\ref{thm:main}.

Theorem~\ref{thm:two-boundary} applies to every finite irreducible
reversible continuous-time chain. Its two-sided estimate becomes accurate when
the Dirichlet gap is large compared with the global gap.
The Schur derivative in Proposition~\ref{prop:two-vertex} identifies
the relevant variance without computing a second eigenvector.
It would be useful to determine which other extremal graph problems
admit a structural argument combining this representation with an
integer budget of the kind used here.

\appendix
\section{Elementary numerical estimates}\label{app:constants}

For clarity, we record rational certificates for the logarithmic estimates
used in the finite threshold argument. The exponential series gives
\[
 \frac{27}{10}<e<\frac{65}{24}+\frac1{100}
 =\frac{1631}{600}<\frac{68}{25}<\frac{11}{4}.
\]
For the upper bound, the tail starting at $1/5!$ is at most
$(1/120)/(1-1/6)=1/100$. Also
\[
 e^3>\sum_{j=0}^8\frac{3^j}{j!}
 =\frac{89641}{4480}>20,
 \qquad e^3<\left(\frac{68}{25}\right)^3<\frac{81}{4}.
\]
The series
\[
 \log\frac{1+t}{1-t}
 =2\sum_{j=0}^{\infty}\frac{t^{2j+1}}{2j+1}
 \qquad(0<t<1)
\]
at $t=1/3$, truncated after four terms, gives
$\log2>693/1000$; at $t=1/2$, three terms give
$\log3>109/100$. Consequently
\[
 \log24=3\log2+\log3>\frac{3169}{1000}>\frac{19}{6}.
\]
Similarly, the positive exponential series yields
\[
 e^2>\frac{22}{3},\qquad e^{2/3}>\frac{157}{81},
 \qquad e^{8/3}>\frac{3454}{243}>14,
\]
so $\log14<8/3$. The inequalities $e^4>54>48$ give
$\log48<4$.

The bounds on $\log200$ follow from
\[
 e^5<\left(\frac{11}{4}\right)^5<200,
 \qquad
 e^{16/3}=e^3e^2e^{1/3}
 >20\cdot\frac{22}{3}\cdot\frac{25}{18}>200.
\]
Finally, $e^{1/12}<\sum_{j\ge0}(1/12)^j=12/11$ implies
\[
 e^{37/12}<\frac{81}{4}\frac{12}{11}
 =\frac{243}{11}<\frac{288}{13}.
\]
All estimates are strict. The remaining bounds
$\log(1+t)<t$ and $\log(1+t)>t/(1+t)$, for $t>0$,
follow directly by integrating $1/(1+t)$.


\begin{thebibliography}{9}

\bibitem{BurdaDudaLuckWaclaw2009}
Z.~Burda, J.~Duda, J.~M.~Luck, and B.~Waclaw,
\emph{Localization of the maximal entropy random walk},
Phys. Rev. Lett. \textbf{102} (2009), no.~16, 160602.
\href{https://doi.org/10.1103/PhysRevLett.102.160602}
{doi:10.1103/PhysRevLett.102.160602}.

\bibitem{DorflerBullo2013}
F.~D\"orfler and F.~Bullo,
\emph{Kron reduction of graphs with applications to electrical networks},
IEEE Trans. Circuits Syst. I Regul. Pap. \textbf{60} (2013), no.~1,
150--163.
\href{https://doi.org/10.1109/TCSI.2012.2215780}
{doi:10.1109/TCSI.2012.2215780}.

\bibitem{Hou2026}
Wenxin Hou,
\emph{Minimum spectral gap of trees and graphs},
contributed talk abstract, 34th Cumberland Conference on Combinatorics,
Graph Theory, and Computing, Auburn University, May 16--17, 2026,
conference program, p.~10.

\bibitem{JovovicKoledinStanic2018}
I.~Jovovi\'c, T.~Koledin, and Z.~Stani\'c,
\emph{Trees with small spectral gap},
Ars Math. Contemp. \textbf{14} (2018), no.~1, 197--207.
\href{https://doi.org/10.26493/1855-3974.992.68d}
{doi:10.26493/1855-3974.992.68d}.

\bibitem{KumarMoharPragadaZhan2025}
H.~Kumar, B.~Mohar, S.~Pragada, and H.~Zhan,
\emph{On the second largest adjacency eigenvalue of trees with given
diameter},
Linear Multilinear Algebra \textbf{73} (2025), no.~13, 2944--2972.
\href{https://doi.org/10.1080/03081087.2025.2484265}
{doi:10.1080/03081087.2025.2484265}.

\bibitem{KumarMoharPragadaZhan2026}
H.~Kumar, B.~Mohar, S.~Pragada, and H.~Zhan,
\emph{Convex combination of first and second eigenvalues of trees},
preprint, 2026,
\href{https://arxiv.org/abs/2601.10036v2}{arXiv:2601.10036v2}.

\end{thebibliography}
\end{document}